      \theoremstyle{plain}
      \newtheorem{theorem}{Theorem}[section]
      \newtheorem{lemma}[theorem]{Lemma}
      \newtheorem{corollary}[theorem]{Corollary}
      \newtheorem{definition}[theorem]{Definition}    
      \theoremstyle{construction}
      \newtheorem{construction}[theorem]{Construction}
      \theoremstyle{example}
      \newtheorem{example}[theorem]{Example}
      \theoremstyle{remark}
      \newtheorem{remark}[theorem]{Remark}      
      \numberwithin{equation}{section}
      \newcommand{\C}{\mathbb{C}}
      \newcommand{\F}{\mathbb{F}}
      \newcommand{\nil}{\varnothing}
      \newcommand{\kk}{\mathbf{k}}
      \newcommand{\ovl}{\overline}
      \newcommand{\wh}{\widehat}
      \newcommand{\sss}{\vspace{2.5 mm}}
      \newcommand{\dd}{\mathit{DD}\bigl(\textstyle \frac{\mathbb{I}}{2} \bigr)}
      \newcommand{\calD}{\mathcal{D}}
      \newcommand{\calN}{\mathcal{N}}
      \newcommand{\calP}{\mathcal{P}}
      \newcommand{\calM}{\mathcal{M}}
      \newcommand{\calZ}{\mathcal{Z}}
      \newcommand{\calA}{\mathcal{A}}
      \newcommand{\calB}{\mathcal{B}}
      \newcommand{\calC}{\mathcal{C}}
      \newcommand{\D}{\mathbb{D}}
      \newcommand{\bdy}{\partial}
      \newcommand{\MCG}{\text{MCG}}
      \newcommand{\Hom}{\text{Hom}}
      \newcommand{\Mor}{\text{Mor}}
      \newcommand{\nn}{\nonumber}
      \newcommand{\im}{\text{im}}
      \newcommand{\Dom}{\text{Dom}}
      \newcommand{\simd}{\text{EQ}_{\bdy\boldsymbol{D}}}
      \newcommand{\simt}{\text{EQ}_{\bdy\boldsymbol{T}}}
      \newcommand{\init}{\text{init}}
      \newcommand{\term}{\text{term}}
      \newcommand{\tot}{\text{tot}}
      \newcommand{\Int}{\text{Int}}
      \newcommand{\BS}{\mathrm{BS}}
      \newcommand{\TP}{\text{TP}}
      \newcommand{\IP}{\text{IP}}
      \newcommand{\tx}{\tilde{x}}
      \newcommand{\tf}{\tilde{f}}
      \newcommand{\tg}{\tilde{g}}
      \newcommand{\unl}{\underline}
      \newcommand{\ova}{\overarc}
      \newcommand{\una}{\underarc}
      \newcommand{\DD}{\boldsymbol{D}}
      \newcommand{\TT}{\boldsymbol{T}}
      \newcommand{\aoa}{\vphantom{\mathbb{I}}^{\mathcal{A}}\mathbb{I}_{\mathcal{A}}}
      \newcommand{\GEN}{\text{GEN}}
      \newcommand{\executeiffilenewer}[3]{%
      \ifnum\pdfstrcmp{\pdffilemoddate{#1}}%
      {\pdffilemoddate{#2}}>0%
      {\immediate\write18{#3}}\fi%
      }
      \newcommand{%
      \executeiffilenewer{.svg}{.pdf}%
      {inkscape -z -D --file=.svg %
      --export-pdf=.pdf --export-latex}%
      \input{.pdf_tex}%
      }[1]{%
      \executeiffilenewer{#1.svg}{#1.pdf}%
      {inkscape -z -D --file=#1.svg %
      --export-pdf=#1.pdf --export-latex}%
      \input{#1.pdf_tex}%
      }
\begin{document}

       \author{Kyler Siegel}
      \address{Department of Mathematics, Stanford University, Palo Alto, CA 94305}
       \email{ksiegel@stanford.edu}
  
       \title[]{A Geometric Proof of a Faithful Linear-Categorical Surface Mapping Class Group Action}
 \begin{abstract}
We give completely combinatorial proofs of the main results of [\ref{faithful}] using polygons. Namely, we prove that the mapping class group of a surface with boundary acts faithfully
on a finitely-generated linear category. Along the way we prove some foundational results 
regarding the relevant objects from bordered Heegaard Floer homology, 

 \end{abstract}
    \maketitle
\tableofcontents
\section{Introduction}
An important open question in topology is whether the mapping class group of a surface with boundary is linear.
In [\ref{faithful}], the authors use the tools of bordered Heegaard Floer homology to answer a categorifed version of this question,
showing that the mapping class group of a surface with boundary $F$ acts faithfully on
a finitely-generated linear category. Specifically, the category is the derived category of finitely-generated left $\calB(F)$-modules,
where $\calB(F)$ is an algebra associated to the surface $F$.
There is also a standard bimodule $\dd$, and a version $\boxtimes$ of the tensor product, which lets us define a bimodule $\dd \boxtimes M(\phi)$.
The action comes from assigning to $\phi$ the functor 
\begin{align*}
(\dd \boxtimes M(\phi)) \otimes_{\calB(F)}\cdot,
\end{align*}
and then passing to the derived category.
The bimodule $M(\phi)$ can be defined in terms of curves on $F$ and polygons formed between them, and this gives very concrete geometric interpretations
to the results involved. Incidentally, $M(\phi)$ is not actually an ordinary bimodule but a more general $\calA_{\infty}$-bimodule,
and checking $\calA_{\infty}$ identities tends to involve verifiying infinitely many relations.

While [\ref{faithful}] gives combinatorial definition of the bimodules $M(\phi)$, it refers proofs about their structure to [\ref{Bimodules},\ref{LOT10c}], which rely on hard analysis.
In this paper we give completely combinatorial proofs of the main results in [\ref{faithful}] in terms of polygons and operations on them.
In light of [\ref{Auroux}], it is interesting to compare our results with [\ref{Abouzaid}].

The paper is organized as follows. In Section \ref{some definitions} we define some of the basic objects that will be used throughout the paper,
especially $\calA_{\infty}$-algebras and modules, and we define $\calB(F)$ and $M(\phi)$.
In Section \ref{Some Preliminary Results} we prove some foundational results:
\begin{theorem}
$M(\phi_0)$ is a $\calB(\calZ')$-$\calB(\calZ)$ $\calA_{\infty}$-bimodule.
\end{theorem}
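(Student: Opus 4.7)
The plan is to verify the $\calA_\infty$-bimodule structure equations directly by a combinatorial analysis of one-parameter families of polygons, mimicking the pattern of a $\partial^2 = 0$ argument but in a purely combinatorial setting. Concretely, the structure maps $m_{i|1|j}$ on $M(\phi_0)$ are defined by counting immersed polygons in $F$ with one distinguished module corner at an intersection point, $i$ successive boundary arcs carrying algebra inputs on the $\calB(\calZ')$-side and $j$ boundary arcs on the $\calB(\calZ)$-side, and one module corner at the output. Each quadratic summand in the $\calA_\infty$-bimodule relation is then a count of a pair of such polygons glued at a generator.

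First I would fix precise conventions for generators, boundary labelings, and the sense in which polygons are "counted" (with $\mathbb{F}_2$ coefficients), and re-express the bimodule axiom as an identity of polygon counts indexed by the number and distribution of algebra inputs on each side. Second, for each fixed input/output data of the bimodule relation I would introduce a one-parameter space of polygon configurations on $F$ whose boundary in the appropriate sense is exactly the formal sum of pairs appearing in that relation. Third, I would classify the codimension-one ends of this space: a polygon may degenerate by splitting off a smaller polygon along a boundary arc (an algebra input from the $\calB(\calZ')$- or $\calB(\calZ)$-side merges with an adjacent one, producing a nested $\mu \circ m$ or $m \circ \mu$ composition), or by breaking along an interior chord into two polygons meeting at a module generator (producing an $m \circ m$ composition). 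These ends will match, in a bijective combinatorial way, the terms on the right-hand side of the bimodule relation.

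The main obstacle will be ensuring the classification of ends is exhaustive and contributes only the terms predicted by the axiom. In particular I need to rule out spurious degenerations, such as polygons collapsing onto a boundary component, algebra inputs escaping off the boundary, or "ghost" polygons of zero area, and I must verify that the combinatorial degenerations involving only algebra arcs on a single side reproduce precisely the $\calA_\infty$-algebra relations of $\calB(\calZ)$ and $\calB(\calZ')$ established earlier in the section. Care is also needed because the left and right actions are intertwined through the same underlying curve configuration, so breakings on the two sides must be accounted for simultaneously and the interference terms must cancel. Once this bijection of ends is in place, the bimodule relation follows uniformly in $i$ and $j$, giving the infinite family of identities in one stroke rather than case by case.
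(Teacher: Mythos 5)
Your proposal correctly identifies the global structure of the argument — the $\calA_\infty$-bimodule relation is a sum of quadratic terms, each a count of a pair of polygons meeting at a generator or a single polygon with two adjacent algebra inputs merged, and you must show these counts cancel in pairs over $\mathbb{F}_2$. But the mechanism you propose, introducing a one-parameter space of polygon configurations and reading the identity off the boundary of that moduli space, is the analytic (Floer-theoretic) template, and it does not transfer cleanly to this discrete setting. Combinatorial polygons in $\calD(\phi_0)$ are rigid: there is no continuous family connecting the two broken configurations, so there is no honest one-dimensional moduli space whose two ends produce the cancelling pair. What you are actually describing, if made precise, collapses to a direct surgery operation on the glued configuration.

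The paper does exactly that: it constructs a fixed-point-free involution $I$ on the set $S = S_1^L \cup S_1^R \cup S_2$ of contributing configurations. Given a pair $(P_1,P_2)$ glued at an intersection point, one looks at the image of the gluing point $p$; if it is a genuine $\alpha$-$\beta$ intersection, one finds the arc emanating from $p$ into the interior of the glued disk $\Dom(P_G)$, cuts along it, and obtains a different pair $(P_1', P_2')$ (the paper's Steps 1--2). If $p$ maps to an endpoint of an arc where two short chords meet, the glued disk is a single polygon through a merged chord, giving an element of $S_1^L$ or $S_1^R$ (Step 3); and conversely a merged-chord polygon is cut to produce a pair in $S_2$ (Step 4). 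The involution directly pairs the terms; no parametrized family or compactness discussion is required. Related to this, the ``spurious degenerations'' you worry about — ghost polygons of zero area, collapse onto a boundary component, inputs escaping to infinity — are artifacts of the analytic theory and simply do not arise here: a combinatorial polygon is an equivalence class of honest immersions with prescribed corners, with no area or conformal structure to degenerate. One simplification you miss: because $\calB(\calZ)$ and $\calB(\calZ')$ are genuine associative differential algebras (with $\mu_1 = 0$ and only $\mu_2$ nonzero), there is no need to reproduce general $\calA_\infty$-algebra relations; the only algebra-side terms are $\mu_2$ of two adjacent chords, which is why the paper's relation $(\ref{ainfinityrelations})$ is so compact. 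You should replace the one-parameter-family heuristic by the explicit cut-and-reglue involution.
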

\begin{theorem}
Let $\phi_b$ and $\phi_{nb}$ be diffeomorphisms representing the same element of $\MCG_0(F)$. Then 
$M(\phi_b)$ and $M(\phi_{nb})$ are homotopy equivalent as $\calA_{\infty}$-bimodules.
\end{theorem}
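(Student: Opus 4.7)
The plan is to reduce the statement to invariance under a finite sequence of elementary moves between the arc configurations defining $M(\phi_b)$ and $M(\phi_{nb})$, and then to exhibit an explicit polygon-counting $\calA_\infty$-bimodule morphism realizing each such move.

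First, since $\phi_b$ and $\phi_{nb}$ represent the same class in $\MCG_0(F)$, they differ by an isotopy rel $\bdy F$, so the images of the reference arc system under $\phi_b$ and under $\phi_{nb}$ are ambient isotopic on $F$ rel $\bdy F$. After a generic perturbation, this isotopy decomposes into finitely many elementary pieces, each passing through a single codimension-one singular configuration: a Reidemeister II--type tangency that creates or cancels a bigon between two arcs, or a Reidemeister III--type triple point. Between consecutive singular times the generators and differentials of $M$ vary only by a combinatorial relabeling, so the content of the theorem is the verification that each elementary move induces a homotopy equivalence.

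Next, for each elementary move between consecutive generic configurations $\phi$ and $\phi'$, I would construct a candidate $\calA_\infty$-bimodule morphism
\begin{align*}
F_{i,1,j} \colon \calB(\calZ')^{\otimes i} \otimes M(\phi) \otimes \calB(\calZ)^{\otimes j} \to M(\phi')
\end{align*}
by counting embedded polygons on $F$ with one distinguished boundary edge running along an interpolating arc between the ``old'' and ``new'' configurations, the other edges running along the reference arcs, corners at the appropriate intersection points, and with weights coming from the idempotent and $\bdy F$ data used in the definition of $\calB(\calZ')$ and $\calB(\calZ)$. Composing the morphisms across the elementary sequence gives a candidate morphism $F \colon M(\phi_b) \to M(\phi_{nb})$, and the same recipe applied to the reversed isotopy gives a candidate inverse $G$.

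One then has to verify that $F$ and $G$ satisfy the $\calA_\infty$-bimodule morphism relations and that $G \circ F$ and $F \circ G$ are homotopic to the identity. Following the approach of the preceding theorem for $M(\phi_0)$ itself, each of the infinitely many relations would be identified with the total boundary count of a one-parameter moduli space of embedded polygons on $F$; the two possible end types---splitting off a pure-algebra polygon on the $\calB(\calZ')$ or $\calB(\calZ)$ side, or breaking along an intermediate arc configuration---supply the two sides of the relation. The nullhomotopy witnessing $G \circ F \simeq \mathrm{id}$ would be extracted from the count of polygons in the ``round-trip'' one-parameter family, with the identity contribution produced by the constant family of small triangles at the newly created bigons and the remainder forming the homotopy operator.

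The hardest part, I expect, is this last step: combinatorially classifying every end of the relevant polygon moduli spaces and matching them up coherently across the infinite family of $\calA_\infty$-relations. The analysis is most delicate precisely at the singular moments in the isotopy, where the set of intersection points jumps and one must confirm that the newly created or cancelled generators enter the bimodule structure on both sides of the move in a way that is compatible with the polygon counts defining $F$, $G$, and the homotopies.
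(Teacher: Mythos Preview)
Your outline is a genuinely different route from the paper's. You propose continuation-type morphisms: count polygons with one edge on an interpolating arc, then verify the $\calA_\infty$-morphism and homotopy relations by analyzing ends of one-parameter polygon moduli spaces. The paper instead uses an algebraic shortcut via the Homological Perturbation Lemma. It first reduces to a single innermost-bigon cancellation (incidentally, no Reidemeister~III moves are needed here: the $\alpha$ arcs are pairwise disjoint and the $\beta$ arcs are pairwise disjoint, so a generic isotopy of $\boldsymbol{\beta}$ through transverse configurations encounters only bigon births and deaths with $\boldsymbol{\alpha}$). It then changes basis in $M(\phi_b)$ to isolate the cancelling pair, writes down by hand a chain homotopy equivalence $(\tf_1,\tg_1,T)$ between the underlying chain complexes, and invokes HPT to obtain an $\calA_\infty$-homotopy equivalence from $M(\phi_b)$ to \emph{some} transferred $\calA_\infty$-structure $\ovl{m}$ on the chain complex of $M(\phi_{nb})$. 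All that remains is to identify $\ovl{m}$ with the genuine polygon-counting structure map $m$ of $M(\phi_{nb})$; this is done via a combinatorial bijection (Lemma~\ref{relatepolygons}) between polygons in $\calD(\phi_{nb})$ and tuples of polygons in $\calD(\phi_b)$ threaded through the bigon.

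Your approach is closer in spirit to analytic Floer theory and is in principle viable, but as written it leaves the hardest part unspecified: you would need to say precisely what the ``interpolating arc'' configuration is in this purely combinatorial setting and give a direct combinatorial classification of all ends of the relevant polygon families, for every $(i,j)$. The paper's HPT route trades that geometric end-analysis for an off-the-shelf algebraic lemma plus a single explicit bijection of polygon sets, which is why it can be carried out completely combinatorially.
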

In Section \ref{The Main Result} we give the key ingredient for an action:
\begin{corollary}
The bimodules $M(\psi\circ\phi)$ and $M(\phi) \boxtimes \left(\dd \boxtimes M(\psi)\right)$ are $\calA_{\infty}$-homotopy equivalent.
\end{corollary}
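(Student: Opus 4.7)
The plan is to construct an explicit $\calA_\infty$-homotopy equivalence between the two bimodules by counting polygons in a ``spliced'' diagram that interpolates between the two sides.

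First, I would appeal to Theorem 1.2 above to choose convenient representative diffeomorphisms. Specifically, pick representatives of $\phi$ and $\psi$ whose mapping cylinders can be stacked so that the polygonal diagram describing $\psi\circ\phi$ is obtained by horizontally concatenating diagrams for $\phi$ and $\psi$ along a copy of $F$ carrying the pointed matched circle $\calZ$. In this picture the generators of $M(\phi)\boxtimes(\dd\boxtimes M(\psi))$ are naturally identified with pairs of intersection points --- one in each half --- matched along the interface via the DD-bimodule $\dd$, while the generators of $M(\psi\circ\phi)$ are intersection points in the spliced diagram whose interface behavior is governed by a nearly-diagonal configuration of $\alpha$- and $\beta$-arcs.

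Next I would define a chain map $F\colon M(\phi)\boxtimes(\dd\boxtimes M(\psi)) \to M(\psi\circ\phi)$ by counting polygons that ``bridge'' the interface: a polygon contributing to $F$ consists of a polygon in the $\phi$-diagram and a polygon in the $\psi$-diagram, together with a matching collection of chords on the interface recording the DD-structure of $\dd$. Equivalently, $F$ counts rigid polygons in the spliced diagram whose restriction to the interface has a controlled, thin form. An inverse map $G$ in the other direction is built analogously, extracting from a polygon in the spliced diagram its pair of restrictions together with the interface bookkeeping.

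The main work is to verify the $\calA_\infty$-relations for $F$ (and $G$) and then that $F\circ G$ and $G\circ F$ are chain-homotopic to the identity. In each case I would analyze the codimension-one boundary of the moduli space of the relevant polygons; this boundary decomposes into (i) collisions of consecutive inputs, which reassemble into the $\calA_\infty$-operations on either side, and (ii) degenerations at the interface, which correspond to the box-tensor-product structure paired with the DD-operations of $\dd$. The expected cancellations are forced by the matched-circle structure of $\calZ$ together with the defining relations of $\dd$. The homotopies witnessing $F\circ G \simeq \mathrm{id}$ and $G\circ F \simeq \mathrm{id}$ are produced by counting one-parameter families of polygons interpolating between thin and fat interface behavior.

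The principal obstacle is that the $\calA_\infty$-identities are infinitely many in number, and the polygons under consideration can have arbitrarily many sides. The strategy for dealing with this combinatorially follows the same pattern as Theorem 1.2: one argues that all higher identities are pinned down by a local model near each corner, so that global verification reduces to a bounded set of cases classified by the combinatorics of the interface region. Degenerate corners and boundary-parallel components are the source of the genuine $\calA_\infty$-corrections and will require the most careful treatment.
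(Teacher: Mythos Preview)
Your proposal aims much higher than what the corollary itself requires. In the paper, the corollary is a one-line consequence of Theorem~\ref{bigtheorem} (which establishes an $\calA_\infty$-\emph{isomorphism} $M(\tilde{\calD}(\psi_0\circ\phi_0)) \to M(\phi_0)\boxtimes(\dd\boxtimes M(\psi_0))$) together with Corollary~\ref{uniquess of representation corollary} and Theorem~\ref{isotopyinvariance}. So the substantive comparison is between your sketch and the paper's proof of Theorem~\ref{bigtheorem}.

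The approaches differ sharply. The paper builds a specific diagram $\tilde{\calD}(\psi_0\circ\phi_0)$ by gluing $\calD(\phi_0)\cup_\sigma -\calD(\mathbb{I})\cup_\rho \calD(\psi_0)$ and then \emph{destabilizing}: one removes a torus neighborhood $T_{w_j}$ of each $\alpha$-circle/$\beta$-circle pair and replaces it by a disk $D_{w_j}$ carrying chord-like arc segments. After this, the generators of the two bimodules are in literal bijection via $F_{0,1,0}$, and the paper checks by an explicit polygon-decomposition argument (Directions~1 and~2) that $F_{0,1,0}$ intertwines the $m$-operations on the nose. Thus $F$ is an $\calA_\infty$-isomorphism with $F_{i,1,j}=0$ for $(i,j)\neq(0,0)$: no higher components, no homotopies, no one-parameter families.

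Your route, by contrast, proposes maps with nontrivial higher terms, verifies chain-map relations via ``codimension-one boundary'' analysis, and produces homotopies by ``counting one-parameter families of polygons interpolating between thin and fat interface behavior.'' That language is precisely the analytic framework the paper is written to avoid; in a purely combinatorial setting there is no moduli space with a codimension-one boundary, and ``one-parameter families'' has no obvious meaning for rigid polygons on a surface. You acknowledge the principal obstacle (infinitely many relations, arbitrarily many sides) but do not resolve it beyond asserting that a local-model argument should suffice. Without the destabilization construction---which is what collapses the interface so that the two bimodules become isomorphic rather than merely homotopy equivalent---you are missing the mechanism that makes the combinatorics finite and checkable. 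As written, the proposal is a plausible outline of the analytic proof from the bordered literature, not a combinatorial argument.
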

We then complete the proof by showing that the identity mapping class gives the identity functor and that the action is faithful:
\begin{theorem}\label{identityisidentity}
$\dd \boxtimes M(\mathbb{I})$ is isomorphic as a type DA bimodule to $\aoa$.
\end{theorem}
\begin{theorem}
If $\dd \boxtimes M(\phi_0)$ is quasi-isomorphic to $\aoa$, then $\phi_0$ is isotopic to $\mathbb{I}$.
\end{theorem}

\subsection{Acknowledgements}
I would like to thank my advisor Robert Lipshitz for his guidance and devotion to teaching me this material.  This project was only possible because of 
his patience and ability to elucidate difficult mathematics.

\section{Some Definitions}\label{some definitions}

In this section we give definitions for some of the objects that will play key roles in this paper. 

\subsection{$\calA_{\infty}$-algebras and $\calA_{\infty}$-bimodules}\label{algebras and bimodules}

We begin by defining $\calA_{\infty}$-algebras, and $\calA_{\infty}$-bimodules over 
$\calA_{\infty}$-algebras. 
These are like associative algebras and ordinary differential bimodules except with associativity replaced by a weaker condition involving infinitely many terms.
Actually all of our $\calA_{\infty}$-bimodules will be defined over ordinary differential algebras, but we define $\calA_{\infty}$-algebras here for completeness.
We fix a ground ring $\kk$, which for our purposes will always be a direct sum of copies of $\mathbb{F}_2$.

\begin{definition}
An $\calA_{\infty}$-algebra $\calA$ over $\kk$ is a $\kk$-module $A$, together with $\kk$-linear maps
\begin{align*}
\mu_i: A^{\otimes i} \rightarrow A
\end{align*}
for each $i \geq 1$, satisfying the following structure equation for each $n \geq 1$ and $a_1,...,a_n \in A$:
\begin{align}\label{ainfalgebrarelations}
\sum_{i+j = n+1}\sum_{l=1}^{n-j+1}\mu_i(a_1\otimes ... \otimes a_{l-1} \otimes \mu_j(a_l \otimes ... \otimes a_{l+j-1}) \otimes a_{l+j} \otimes ... \otimes a_n) = 0. 
\end{align}
\end{definition}

For example, the structure equation for $n=3$ can be interpreted as
\begin{align*}\tiny
\xymatrix@=1em
{
a_1\ar@/_/@{->}[dr] && a_2\ar@{->}[dd] && a_3\ar@/^/@{->}[ddll] & & a_1\ar@/_/@{->}[ddrr] && a_2\ar@{->}[d] && a_3\ar@/^/@{->}[ddll] & & a_1\ar@/_/@{->}[ddrr] && a_2\ar@{->}[dd] && a_3\ar@/^/@{->}[dl] & &  \\
& \mu_1\ar@/_/@{->}[dr] &&&       &+& &&\mu_1\ar@{->}[d]&&         &+& &&&\mu_1\ar@/^/@{->}[dl]&         &+&  \\
&&\mu_3\ar@{->}[d]&&         & & &&\mu_3\ar@{->}[d]&&         & & &&\mu_3\ar@{->}[d]&&         & &  \\
&&&&         & & &&&&         & & &&&&         & &  \\
}\\ \tiny
\xymatrix@=1.5em
{
a_1\ar@/_/@{->}[dr] & a_2\ar@{->}[d] & a_3\ar@/^/@{->}[ddl] && a_1\ar@/_/@{->}[ddr] & a_2\ar@{->}[d] & a_3\ar@/^/@{->}[dl] && a_1\ar@/_/@{->}[dr] & a_2\ar@{->}[d] & a_3\ar@/^/@{->}[dl] &&\\
    & \mu_2\ar@{->}[d] &   &+&  &\mu_2\ar@{->}[d] &           &+&    &\mu_3\ar@{->}[d]&& =&0&\\
    & \mu_2\ar@{->}[d] &   &&  &\mu_2\ar@{->}[d]& &&  &\mu_1\ar@{->}[d] & &&\\
    &       &   &&  &&    && && &&
}\\
\end{align*}

By setting $\mu_0 \equiv 0$ we can combine the $\mu_i$'s to define a single map 
\begin{align*}
\mu := \sum_{i=0}^{\infty}\mu_i: T^*(A) \rightarrow A
\end{align*}
on the tensor algebra 
\begin{align*}
T^*(A) := \bigoplus_{n=0}^{\infty}A^{\otimes n}. 
\end{align*}
Defining $\ovl{D}^{\calA}: T^*(A) \rightarrow T^*(A)$ by 
\begin{align*}
\ovl{D}^{\calA}(a_1\otimes ... \otimes a_n) := \sum_{j=1}^n\sum_{l=1}^{n-j+1}a_1\otimes ... \otimes a_{l-1}\otimes \mu_j(a_l\otimes ... \otimes a_{l+j-1}) \otimes a_{l+j} \otimes ... \otimes a_n,
\end{align*}
the $\calA_{\infty}$-algebra relations $(\ref{ainfalgebrarelations})$ can be written more succinctly as $\mu\circ \ovl{D}^{\calA} = 0$.

Define $\Delta_2: T^*(A) \rightarrow T^*(A) \otimes T^*(A)$ by 
\begin{align*}
\Delta_2(a_1 \otimes ... \otimes a_n) := \sum_{i=0}^n(a_1 \otimes ... \otimes a_i) \otimes (a_{i+1} \otimes ... \otimes a_n).
\end{align*}

\begin{definition}
Let $\calA$ and $\calB$ be $\calA_{\infty}$-algebras over $\kk$. Then an $\calA_{\infty}$-bimodule $_\calA M_{\calB}$ over $\calA$ and $\calB$ is a $\kk$-bimodule $M$ together with
$\kk$-linear maps
\begin{align*}
m_{i,1,j}: A^{\otimes i}\otimes M \otimes B^{\otimes j} \rightarrow M 
\end{align*}
for each $i,j \geq 0$ satisfying the following:
 \begin{align}\label{ainfbimodulerelations}
\xymatrix@C=1.25em@R=2em
{
\ar@{=>}[d]&\ar@{-->}[dd]&\ar@{=>}[d]&&\ar@{=>}[d]&\ar@{-->}[dd]&\ar@/^/@{=>}[ddl]&&\ar@/_/@{=>}[ddr]&\ar@{-->}[dd]&\ar@{=>}[d]&&\\
\Delta_2\ar@/_/@{=>}[dr]\ar@/_/@{=>}[ddr]&&\Delta_2\ar@/^/@{=>}[dl]\ar@/^/@{=>}[ddl]&&\ovl{D}^{\calA}\ar@/_/@{=>}[dr]&&&&&&\ovl{D}^{\calB}\ar@/^/@{=>}[dl]&&\\
&m\ar@{-->}[d]&&+&&m\ar@{-->}[d]&&+&&m\ar@{-->}[d]&&=0.\\
&m\ar@{-->}[d]&&&&&&&&&&&\\
&&&&&&&&&&&&\\
}
\end{align}
\end{definition}
We will also refer to $\calA_{\infty}$-bimodules as {\em type AA bimodules}.

\subsection{Arc diagrams and the algebra $\calB(\calZ)$}\label{arc diagrams and the algebra B(Z)}

Let $F$ be a connected, oriented surface of genus $g$ with $b > 0$ boundary components $Z_1,...,Z_b$, with each $Z_i$ divided into two closed arcs $S_i^+$ and $S_i^-$ which overlap
only at their endpoints. Consider a collection of pairwise-disjoint, embedded paths $\alpha_1,...,\alpha_{2(g+b-1)}$ in $F$ with $\bdy \alpha_i \subset \cup_i S_i^-$, such that $F \setminus (\cup_i \alpha_i)$ is a union
of disks, and the boundary of each such disk contains exactly one $S_i^+$. Assume we also have a basepoint $z_i$ in each $S_i^+$.

Putting $\{a_i,a_i'\} = \bdy \alpha_i$, we call the tuple
\begin{align*}
\calZ := ((Z_1,...,Z_b),(\{a_1,a_1'\},...,\{a_{2(g+b-1)},a_{2(g+b-1)}'\}),(z_1,...,z_b)) 
\end{align*}
an {\em arc diagram} for $F$.

Associated to $\calZ$ we have an algebra $\calB(\calZ)$ over $\kk = \bigoplus_{i = 1}^{2(g+b-1)}\F_2$ with a basis 
over $\mathbb{F}_2$ consisting of one element $I_i$ for each pair $\{a_i,a_i'\}$ and one element $\rho$ for each chord, i.e. nontrivial interval
in $Z_i \setminus \{z_i\}$ with endpoints in $\{a_1,a_1',...,a_{2(g+b-1)},a_{2(g+b-1)}'\}$. For each chord $\rho$ we denote the initial and terminal points (with respect to the 
orientation of $Z_i$) by $\rho^-$ and $\rho^+$ respectively. 

The product on $\calB(\calZ)$ is defined as follows:
\begin{itemize}
 \item The $I_i$'s are orthogonal idempotents, i.e. $I_iI_j = \delta_{ij}I_i$.
 \item $I_i\rho = \rho$ if $a_i$ or $a_i'$ is $\rho^-$, and $I_i\rho = 0$ otherwise. Similarly, $\rho I_j = \rho$ if $a_j$ or $a_j'$ is $\rho^+$, and 
$\rho I_j = 0$ otherwise.
 \item For chords $\rho$ and $\rho'$, $\rho \rho'$ is the chord from $\rho^-$ to $(\rho')^+$ if $\rho^+ = (\rho')^-$ and $\rho\rho' = 0$ otherwise.
\end{itemize}
Note that the sum of the idempotents $\mathbf{1}_{\calB(\calZ)} := \sum_i I_i$ provides a multiplicative identity for $\calB(\calZ)$.

From $\calZ$ we can build a surface $F^\circ(\calZ)$ by thickening each boundary circle $Z_i$ of $\calZ$ to an annulus $Z_i \times [0,1]$ and then 
attaching 2-dimensional 1-handles to each pair $\{a_i,a_i'\}$ in the inner boundaries of the annuli. The surface $F(\calZ)$ obtained by cutting along each 
$z_i \times [0,1] \subset Z_i \times [0,1]$ is canonically identified with $F$ (up to isotopy).

Associated to each $\calZ$ is a dual arc diagram $\calZ'$ coming from $\{\eta_i\cap \bdy F(\calZ)\}$, where $\{\eta_i\}$ is the (unique up to isotopy) dual set of curves in $F^\circ(\calZ)$
such that
\begin{itemize}
 \item $\eta_i$ is contained in the handle of $F^\circ(\calZ)$ corresponding to $\alpha_i$ and
 \item $\eta_i$ intersects $\alpha_i$ in a single point.
\end{itemize}
From $\calZ'$ we get an algebra $\calB(\calZ')$.

\subsection{The bimodule $M(\phi)$}

We now wish to define a $\calB(\calZ')-\calB(\calZ)$ $\calA_{\infty}$-bimodule $M(\phi)$ associated to each mapping class $\phi\in \MCG_0(F(\calZ))$. Here
$\MCG_0(F)$ is the group of isotopy classes of diffeomorphisms of $F$ which fix the boundary of $F$ pointwise.
Let $\phi_0$ be a representative of $\phi$, and let $\phi_0$ act on the $\eta$-curves $\eta_1,...,\eta_{2(g+b-1)}$ of $F^\circ(\calZ)$, giving a new set of curves $\beta_1,...,\beta_{2(g+b-1)}$. 
We put $\boldsymbol{\alpha} = \alpha_1\cup...\cup \alpha_{2(g+b-1)}$ and $\boldsymbol{\beta} = \beta_1\cup...\cup\beta_{2(g+b-1)}$ and let
\begin{align*}
\calD(\phi_0) := (F^\circ(\calZ),\boldsymbol{\alpha},\boldsymbol{\beta}).
\end{align*} 
We can always assume that $\boldsymbol{\alpha}$ and $\boldsymbol{\beta}$ intersect transversally. 

\begin{figure}
 \centering
 \includegraphics[scale=.5]{./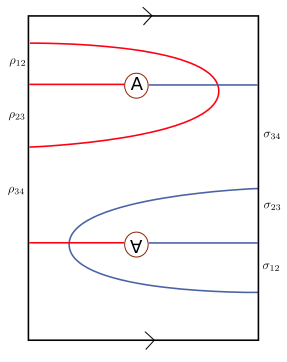}
 \caption{The identity diagram $\calD(\mathbb{I})$.}
\end{figure}

In order to define $M(\phi)$, we will first need the notion of polygons in $\calD(\phi_0)$. Let
\begin{align*}
\D^2 := \{x + iy \in \C\;|\; x \geq 0, x^2 + y^2 \leq 1\}\\
\gamma_L := \bdy\D^2\cap\{x + iy\in\C\;|\;x=0\}\\
\gamma_R := \bdy\D^2\cap\{x + iy\in\C\;|\;x\geq 0\}.
\end{align*}
Note the non-standard definition of $\D^2$. 
We orient $\gamma_L$ and $\gamma_R$ from $-i$ to $i$ and refer to them as the left and right sides of $\D^2$ respectively.
\begin{definition}
Let $\sigma_1,...,\sigma_m$ and $\rho_1,...,\rho_n$ be chords in $\calZ$ and $-\calZ'$ (the orientation reversal of $\calZ'$) respectively,
and let $x,y \in \boldsymbol{\alpha}\cap\boldsymbol{\beta}$ be intersection points in $\calD(\phi_0)$. A {\em polygon in $\calD(\phi_0)$ connecting $x$ to $y$ through
$(\sigma_1,...,\sigma_m)$ and $(\rho_1,...,\rho_n)$} is a map $P: \D^2 \rightarrow \calD(\phi_0)$ such that
\begin{itemize}
 \item $P(\gamma_L) \subset (\boldsymbol{\beta} \cup \bdy\calD(\phi_0))$ and $P(\gamma_R) \subset (\boldsymbol{\alpha}\cup\bdy\calD(\phi_0))$.
 \item There are ordered points $p_1,...,p_{2m} \in \gamma_L$ and $q_1,...,q_{2n} \in \gamma_R$ appearing in order as one traverses $\gamma_L$ and $\gamma_R$ so that 
$P$ is an orientation-preserving immersion on $\D^2\setminus\{p_1,...,p_{2m},q_1,...,q_{2n}\}$.
 \item $P(-i) = x$ and $P(i) = y$.
 \item For each $i$, $P([p_{2i+1},p_{2i+2}]) = \sigma_i$ and $P([q_{2i+1},q_{2i+2}]) = \rho_i$, and except for these intervals, $P$ maps to the interior of $\calD(\phi_0)$.
\end{itemize}
We allow $m$ or $n$ (or both) to be zero. A polygon with $m=n=0$ is a {\em bigon}.
\end{definition}
We will sometimes refer to $P|_{\gamma_L}$ and $P|_{\gamma_R}$ (or their images) as the left and right sides of $P$ respectively.
By abuse of notation we will often not distinguish between a polygon and its image when no confusion will arise.

We are only interested in equivalence classes of polygons, where we define $P$ to be equivalent to $P \circ w$ for any diffeomorphism $w: \D^2 \rightarrow \D^2$.
From now on by polygon we mean equivalence class of polygons.
Let 
\begin{align*}
n(x,y,(\sigma_1,...,\sigma_m),(\rho_1,...,\rho_n)) \in \F_2
\end{align*}
denote the mod two number of equivalence classes of polygons connecting $x$ to $y$
through $(\sigma_1,...,\sigma_m)$ and $(\rho_1,...,\rho_n)$.

We are now finally in a position to define the $\calB(\calZ')-\calB(\calZ)$ $\calA_{\infty}$-bimodule $M(\phi_0)$. 
\begin{definition}
As an $\F_2$-vector space, $M(\phi_0)$ is generated by the intersection points $\boldsymbol{\alpha}\cap\boldsymbol{\beta}$.
For sequences of chords $\sigma_1,...,\sigma_m \in \calB(\calZ')$ and $\rho_1,...,\rho_n \in \calB(\calZ)$ and generators $x,y \in \boldsymbol{\alpha}\cap\boldsymbol{\beta}$ we define
\begin{align*}
m_{m,1,n}(\sigma_m,...,\sigma_1,x,\rho_1,...,\rho_n) := \sum_{y\in\boldsymbol{\alpha}\cap\boldsymbol{\beta}}n(x,y,(\sigma_1,...,\sigma_m),(\rho_1,...,\rho_n))y. 
\end{align*}

For an idempotent $J \in \calB(\calZ')$, we define $m_{1,1,0}(J,x)$ to be $x$ if $J$ is the idempotent corresponding to the $\beta$ arc containing $x$, and 
zero otherwise. Similarly, for an idempotent $I \in \calB(\calZ)$, we define $m_{0,1,1}(x,I)$ to be $x$ if $I$ is the idempotent corresonding to the $\alpha$ arc containing $x$,
zero otherwise. For $m + n > 1$, we define $m_{m,1,n}$ to be zero if any of the inputs are idempotents.
\end{definition}
\begin{remark}
Because of the last paragraph above, we say that $m$ is {\em strictly unitial}.
\end{remark}

Since the $\calA_{\infty}$-homotopy type of $M(\phi_0)$ depends only on the isotopy class of $\phi_0$ (see Section \ref{Homotopy invariance of M under isotopies} below), we will write
$M(\phi) = M(\phi_0)$.

\section{Some Preliminary Results}\label{Some Preliminary Results}
In this section we establish two fundamental facts about $M(\phi)$. In Section $\ref{Checking the A infinity relations for M}$ 
we show that the definition given in the previous section
does indeed make $M(\phi_0)$ into an $\calA_{\infty}$-bimodule, and in Section $\ref{Homotopy invariance of M under isotopies}$ we show that $M(\phi_0)$ is $\calA_{\infty}$-homotopy 
invariant under isotopies
of $\phi_0$ (this is made precise below), and therefore $M(\phi)$ is well-defined up to $\calA_{\infty}$-homotopy equivalence.

\subsection{Checking the $A_{\infty}$ relations for $M(\phi_0)$}\label{Checking the A infinity relations for M}

\begin{theorem}\label{MIsAInfinity}
$M(\phi_0)$ is a $\calB(\calZ')$-$\calB(\calZ)$ $\calA_{\infty}$-bimodule.
\end{theorem}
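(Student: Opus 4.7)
The plan is to verify the $\calA_{\infty}$-bimodule structure equation (\ref{ainfbimodulerelations}) by pairing up the terms on its left-hand side through a combinatorial bijection, so that each ``degenerate polygon'' contributes $1+1 \equiv 0 \pmod{2}$ to the sum. This is the combinatorial analogue of the standard holomorphic-disk moduli-space argument, but since we work purely with immersed polygons no analysis is required.

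First I would dispose of the unitality clauses of the definition. When an algebra input is an idempotent, the only nonzero terms in $m_{m,1,n}$ come from $m_{1,1,0}(J,x)=x$ or $m_{0,1,1}(x,I)=x$, and these pair directly with their neighboring contributions so that (\ref{ainfbimodulerelations}) becomes tautological in the idempotent slots.

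For the main content, fix intersection points $x, y \in \boldsymbol{\alpha}\cap\boldsymbol{\beta}$ and chord sequences $(\sigma_1,\ldots,\sigma_m)$ in $\calZ'$ and $(\rho_1,\ldots,\rho_n)$ in $\calZ$. I would introduce the notion of a \emph{degenerate polygon}: a polygon from $x$ to $y$ equipped with one distinguished boundary point at which either (i) two adjacent chord intervals on the same side share a common endpoint (so the $\boldsymbol{\alpha}$- or $\boldsymbol{\beta}$-arc segment between them has collapsed to a point), or (ii) the boundary $\boldsymbol{\alpha}$- or $\boldsymbol{\beta}$-arc passes through a point of $\boldsymbol{\alpha}\cap\boldsymbol{\beta}$. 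Each degenerate polygon admits exactly two canonical resolutions. \emph{Merging}: smooth out the collapsed arc to produce an ordinary polygon with one fewer chord on the relevant side, whose merged chord is the algebra product of the two original chords; this contributes to an $m\circ\ovl{D}^{\calA}$ or $m\circ\ovl{D}^{\calB}$ term of (\ref{ainfbimodulerelations}). \emph{Splitting}: pinch the polygon at the distinguished intersection point to obtain a pair of ordinary polygons $P_1$ from $x$ to some $z$ and $P_2$ from $z$ to $y$, whose chord sequences are obtained by applying $\Delta_2$ to $(\sigma_1,\ldots,\sigma_m)$ and $(\rho_1,\ldots,\rho_n)$; this contributes to the $m\circ m$ term.

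The theorem will follow once I exhibit a bijection (in both directions) between degenerate polygons and contributions to each of these two families of terms: every $m\circ m$ contribution and every $m\circ\ovl{D}^{\calA}$ or $m\circ\ovl{D}^{\calB}$ contribution arises from exactly one degenerate polygon, and conversely each degenerate polygon admits one resolution of each type. Since each degenerate polygon then contributes $1+1 \equiv 0 \pmod{2}$, the left-hand side of (\ref{ainfbimodulerelations}) vanishes. The main obstacle will be a careful combinatorial case analysis to verify these bijections, particularly confirming that merging and splitting are always genuinely possible (for instance, that the endpoints of chords being merged always match so the algebra product is nonzero), that each resolution yields an honest polygon in the sense of the earlier definition, and that cases involving the endpoints $\pm i$ where $x$ or $y$ lie are correctly accounted for. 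These are purely topological verifications and do not require analytic input, in contrast to the arguments of [\ref{Bimodules},\ref{LOT10c}].
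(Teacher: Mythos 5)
Your high-level strategy — verify (\ref{ainfinityrelations}) by pairing up the nontrivial summands over $\F_2$ via degenerations/gluings of polygons — is the same as the paper's, which constructs a fixed-point-free involution $I$ on $S = S_1^L \cup S_1^R \cup S_2$. But the specific pairing you propose does not work. You claim each degenerate polygon admits exactly one ``merging'' resolution and exactly one ``splitting'' resolution, which would set up a bijection between $m\circ m$ contributions and $m\circ\ovl{D}^{\calA}$ or $m\circ\ovl{D}^{\calB}$ contributions. No such bijection exists. Already in the case $i=j=0$, the relation $m_{0,1,0}\circ m_{0,1,0}=0$ requires split bigon pairs to cancel against one another, yet there are no chords to merge, so under your scheme every one of those terms would be orphaned.

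More generally, when a split pair $(P_1,P_2)\in S_2$ is glued along the maximal common boundary segment at its shared corner $\TP(P_1)=\IP(P_2)$, the distinguished tip $p$ where the two boundaries diverge need not map to a shared endpoint of two adjacent chords; $P_G(p)$ may instead be a point of $\boldsymbol{\alpha}\cap\boldsymbol{\beta}$. In that case the glued configuration is not a merged-chord polygon and admits no ``merging'' resolution; instead it admits a \emph{second splitting}, along a different interior $\alpha$- or $\beta$-arc emanating from $p$, yielding a distinct pair $(P_1',P_2')\in S_2$. The paper's Steps~1 and~2 pair $S_2$ with $S_2$ in exactly this way, and only Steps~3 and~4 implement the merge/split correspondence you describe. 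The correct organizing principle is that each glued configuration, together with its tip, admits exactly two resolutions determined by where the distinguished arc from the tip terminates — and each of the two may independently be a split or a merge. To close the argument you would need to build that case analysis into your definition of degenerate polygon and its resolutions; as stated, the claimed bijection between the two families of terms is not even well-defined.
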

\begin{proof}
Since $\calB(\calZ')$ and $\calB(\calZ)$ are in fact ordinary associative algebras (a much simplified case
of $\calA_{\infty}$-algebras), many of the terms in the structure equations drop out.
In fact, the $\calA_{\infty}$ relations (\ref{ainfbimodulerelations}) can be written as 
\begin{align}\label{ainfinityrelations}
 0 = \sum_{k = 0}^i\sum_{l=0}^j m(\sigma_i,...,\sigma_{k+1},m(\sigma_k,...,\sigma_1,x,\rho_1,...,\rho_l),\rho_{l+1},...,\rho_j)\nn\\\;\;\;\;\;
 +\; \sum_{k=1}^{i-1}m(\sigma_i,...,\sigma_{k+2},\mu_2(\sigma_{k+1},\sigma_{k}),\sigma_{k-1},...,\sigma_1,x,\rho_1,...,\rho_j)\nn\\\;\;\;\;\;
 +\; \sum_{l=1}^{j-1}m(\sigma_i,...,\sigma_1,x,\rho_1,...,\rho_{l-1},\mu_2(\rho_l,\rho_{l+1}),\rho_{l+2},...,\rho_j)
\end{align}
for each $x \in M(\phi_0),\;\sigma_1,...,\sigma_i \in \calB(\calZ'),\;\rho_ 1,...,\rho_j \in \calB(\calZ)$.

To establish (\ref{ainfinityrelations}), we will show that the nontrivial summands in the expansion of the right side cancel pairwise.
We first introduce some notation.
Let $S_1^L$ denote the set of polygons $\D^2 \rightarrow \calD(\phi_0)$ starting at $x$ through $(\sigma_1,...,\sigma_{k-1},\mu_2(\sigma_{k+1},\sigma_k),\sigma_{k+2},...,\sigma_i)$
and $(\rho_1,...,\rho_j)$ for some $1 \leq k \leq i-1$, and let
$S_1^R$ denote the set of polygons $\D^2 \rightarrow \calD(\phi_0)$ starting at $x$ through $(\sigma_1,...,\sigma_i)$ and $(\rho_1,...,\rho_{l-1},\mu_2(\rho_l,\rho_{l+1}),\rho_{l+2},...,\rho_j)$
for some $1 \leq l \leq j-1$.
For a polygon $P$, let $\IP(P)$ and $\TP(P)$ denote its initial point and terminal point respectively, 
let $\Dom(P)$ denote the domain\footnote{In this paper the word ``domain'' is always synonymous with ``source''; this differs from the terminology elsewhere in the Heegaard Floer literature.}
of $P$, and let $\gamma_L(\Dom(P))$ and $\gamma_R(\Dom(P))$
denote the left and right sides of $\Dom(P)$ respectively.
Let $S_2$ denote the set of pairs $(P_1,P_2)$ of polygons $\D^2 \rightarrow \calD(\phi_0)$,
where $P_1$ is a polygon starting at $x$ through $(\sigma_1,...,\sigma_k)$ and $(\rho_1,...,\rho_l)$
and $P_2$ is a polygon starting at $\TP(P_1)$ through $(\sigma_{k+1},...,\sigma_i)$ and $(\rho_{l+1},...,\rho_j)$,
for some $0 \leq k \leq i,\;0\leq l \leq j$.
Set 
\begin{align*}
S := S_1^L \cup S_1^R \cup S_2. 
\end{align*}
It will suffice to construct a fixed point free involution $I: S \rightarrow S$.

To begin, suppose $(P_1,P_2) \in S_2$. Since $\TP(P_1) = \IP(P_2)$, 
there is an embedding $s_1: [0,1] \rightarrow \bdy\Dom(P_1)$ with $s_1(0) = i$ and an embedding
$s_2: [0,1] \rightarrow \bdy\Dom(P_2)$ with $s_2(0) = -i$, such that $P_1 \circ s_1 = P_2 \circ s_2$.
We choose $s_1$ and $s_2$ to be maximal in length.
Let 
\begin{align*}
P_G: \Dom(P_1) \cup_{s_1 \sim s_2} \Dom(P_2) \rightarrow \calD(\phi_0)
\end{align*}
denote the map obtained by gluing together $P_1$ 
and $P_2$ along $s_1$ and $s_2$.
Let $p \in \Dom(P_G)$ denote the point $s_1(1) = s_2(1)$.
Then $P_G(p)$ is either an intersection point of an $\alpha$ arc and a $\beta$ arc, or else
an endpoint of an $\alpha$ or $\beta$ arc.
In the former case, $p$ is either $i \in \Dom(P_2)$ or $-i \in \Dom(P_1)$. 
We will define $I$ in steps as follows:
\begin{itemize}
 \item {\em Step 1} defines $I((P_1,P_2))$ for $(P_1,P_2) \in S_2$ when $p = i \in \Dom(P_2)$.
 \item {\em Step 2} defines $I((P_1,P_2))$ for $(P_1,P_2) \in S_2$ when $p = -i \in \Dom(P_1)$.
 \item {\em Step 3} defines $I((P_1,P_2))$ for $(P_1,P_2) \in S_2$ when $P_G(p)$ is an intersection point of two adjacent short chords.
 \item {\em Step 4} defines $I(P)$ for $P \in S_1^L \cup S_1^R$. 
\end{itemize}

\noindent {\em Step 1}: Note that $p$ lies either on a) the left side of $\Dom(P_1)$ or b) the right side of $\Dom(P_1)$.
In case a) (resp. b)) there is an embedding $s_3: [0,1] \rightarrow \Dom(P_G)$ with $s_3((0,1)) \subset \Int(\Dom(P_G)),\; s_3(0) = p$, 
and $s_3(1) \in \gamma_L(\Dom(P_1)) \setminus \gamma_L(\Dom(P_2))$ 
(resp. $s_3(1) \in \gamma_R(\Dom(P_1)) \setminus \gamma_R(\Dom(P_2))$),
such that $P_G \circ s_3$ lies along an $\alpha$ arc (resp. $\beta$ arc).
We now observe that $s_3$ cuts $\Dom(P_G)$ into two pieces, and the restrictions of $P_G$ to these pieces
define polygons $P_1'$ from $\IP(P_1)$ to $P_G(s_3(1))$ and $P_2'$ from $P_G(s_3(1))$ to $\TP(P_2)$.
One easily checks that $(P_1',P_2') \in S_2$ and $(P_1',P_2') \neq (P_1,P_2)$, and we define $I((P_1,P_2)) := (P_1',P_2')$.
See Figure \ref{AInfinityGluings}.

\sss

\noindent {\em Step 2}: Similar to Step 1, $p$ lies either a) on the left side of $\Dom(P_2)$ or b) the right side of $\Dom(P_2)$.
In case a) (resp. b)) there is an embedding $s_3: [0,1] \rightarrow \Dom(P_G)$ with 
$s_3((0,1)) \subset \Int(\Dom(P_G)),\; s_3(0) = p$, and $s_3(1) \in \gamma_L(\Dom(P_2)) \setminus \gamma_L(\Dom(P_1))$
(resp. $s_3(1) \in \gamma_R(\Dom(P_2)) \setminus \gamma_R(\Dom(P_1))$),
such that $P_G \circ s_3$ lies along an $\alpha$ arc (resp. $\beta$ arc).
We observe that $s_3$ cuts $\Dom(P_G)$ into two pieces, and the restrictions of $P_G$ to these pieces 
define polygons $P_1'$ from $\IP(P_1)$ to $P_G(s_3(1))$ and $P_2'$ from $P_G(s_3(1))$ to $\TP(P_2)$.
Again, one easily checks that $(P_1',P_2') \in S_2$ and $(P_1',P_2') \neq (P_1,P_2)$, and we define $I((P_1,P_2)) := (P_1',P_2')$.
See Figure \ref{AInfinityGluings}.

\sss

\noindent {\em Step 3}: In this case $p$ lies on either a) the left side of $\Dom(P_1)$ or b) the right side of $\Dom(P_1)$.
In case a) $P_G$ itself defines a polygon $I((P_1,P_2))$ in $S_1^L$, whereas in case b) $P_G$ defines a polygon $I((P_1,P_2))$ in $S_1^R$.

\sss

\noindent {\em Step 4}: Let $P \in S_1^L$ be a polygon through $(\sigma_1,...,\sigma_{k-1},\mu_2(\sigma_{k+1},\sigma_k),\sigma_{k+2},...,\sigma_i)$
and $(\rho_1,...,\rho_j)$. Let $p \in \gamma_L(P)$ be the point corresponding to $\sigma_k \cap \sigma_{k+1}$.
There must be an embedding $s_3: [0,1] \rightarrow \Dom(P)$ with $s_3((0,1)) \subset \Int (\Dom(P)),\;s_3(0) = p$, and $s_3(1) \in \gamma_R(\Dom(P))$.
We can assume that $s_3(1)$ lies between $(\rho_1,...,\rho_l)$ and $(\rho_{l+1},...,\rho_j)$.
Then $s_3$ cuts $\Dom(P)$ into two pieces, and the restrictions of $P$ to these pieces define polygons
$P_1'$ from $\IP(P)$ to $P(s_3(1))$ through $(\sigma_1,...,\sigma_k)$ and $(\rho_1,...,\rho_l)$
and $P_2'$ from $P(s_3(1))$ to $\TP(P)$ through $(\sigma_{k+1},...,\sigma_i)$ and $(\rho_{l+1},...,\rho_j)$.
We define $I(P) := (P_1',P_2') \in S_2$. 

$I(P) \in S_2$ is defined similarly when $P \in S_1^R$.

\sss

We have now defined $I$ on all of $S$. It follows immediately that $I$ is fixed point free, 
and we leave it to the reader to verify that $I$ is an involution.
\end{proof}
\begin{figure}
\begin{center}
 \includegraphics[scale=.6]{./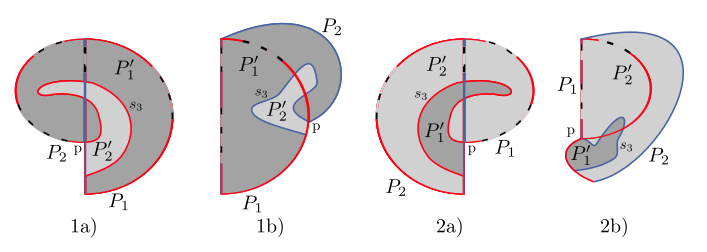}
\end{center}
\caption{Illustrating prototypical examples of {\em Step 1} and {\em Step 2} in the proof of Theorem \ref{MIsAInfinity}. The shadings represent the resulting polygons $P_1'$ and $P_2'$.}
\label{AInfinityGluings}
\end{figure}

\subsection{Homotopy invariance of $M(\phi_0)$ under isotopies}\label{Homotopy invariance of M under isotopies}
Recall that for ordinary chain complexes $(M,d)$ and $(M',d')$, a chain map is a linear map $f: M \rightarrow M'$ such that 
$d'\circ f = f \circ d$, i.e. $f$ commutes with the differential. Two chain maps $f,f': M \rightarrow M'$ are called homotopic
if there exist a linear map $h: M \rightarrow M'$ such that $f - f' = d'\circ h + h \circ d$.
Defining a differential $\bdy$ on linear maps $h: M \rightarrow M'$ by $\bdy(h) = d'\circ h + h\circ d$, we see that a linear map $f$ is a chain map if $\bdy(f) = 0$, and 
linear maps $f$ and $f'$ are homotopic if $f - f' \in \im(\bdy)$.
Observe that $(\Hom(M,M'),\bdy)$ is itself a chain complex, since we have
\begin{align*}
(\bdy \circ \bdy)(h) = \bdy(d'\circ h + h\circ d) = d'\circ d' \circ h + d' \circ h \circ d + d' \circ h \circ d + h \circ d \circ d = 0. 
\end{align*}
Recall that a chain map $f$ is called a homotopy equivalence if there exists a chain map 
$g: M' \rightarrow M$ such that $g \circ f$ is homotopic to $\mathbb{I}_M$ and $f \circ g$ is homotopic to $\mathbb{I}_{M'}$.
 
We now proceed to define analogous terms for the $\calA_{\infty}$ case, which will allow us to make precise the statement that $M(\phi_0)$ is homotopy invariant under 
isotopies of $\phi_0$.
Let $\calA$ and $\calB$ be $\calA_{\infty}$-algebras over $\kk$ and let $_\calA M_{\calB}$ and $_\calA M'_{\calB}$ be $\calA_{\infty}$-bimodules over $\calA$ and $\calB$.
The analogues of linear maps are called {\em morphisms}.
\begin{definition}
A {\em morphism} $\vphantom{M}_\calA M_{\calB} \rightarrow \vphantom{M'}_\calA M'_{\calB}$ is a collection of 
$\kk$-linear maps
\begin{align*} 
f_{i,1,j}: A^{\otimes i} \otimes M \otimes B^{\otimes j} \rightarrow M'
\end{align*}
for $i,j \geq 0$.
\end{definition}

Let $f$ be the total map $f: T^*(A) \otimes M \otimes T^*(B) \rightarrow M'$. With the notation of Section \ref{algebras and bimodules}, we define a differential on morphisms by
\begin{align}\label{A infinity homomorphism relations}
\xymatrix@=.80em
{
&&\ar@{=>}[d]&\ar@{-->}[dd]&\ar@{=>}[d]&&\ar@{=>}[d]&\ar@{-->}[dd]&\ar@{=>}[d]&&\ar@{=>}[d]&\ar@{-->}[dd]&\ar@/^/@{=>}[ddl]&&\ar@/_/@{=>}[ddr]&\ar@{-->}[dd]&\ar@{=>}[d]&&\\
&&\Delta_2\ar@/_/@{=>}[dr]\ar@/_/@{=>}[ddr]&&\Delta_2\ar@/^/@{=>}[dl]\ar@/^/@{=>}[ddl]&&\Delta_2\ar@/_/@{=>}[dr]\ar@/_/@{=>}[ddr]&&\Delta_2\ar@/^/@{=>}[dl]\ar@/^/@{=>}[ddl]&&\ovl{D}^{\calA}\ar@/_/@{=>}[dr]&&&&&&\ovl{D}^{\calB}\ar@/^/@{=>}[dl]&&\\
\bdy(f)&= &&f\ar@{-->}[d]&&+&&m\ar@{-->}[d]&&+&&f\ar@{-->}[d]&&+&&f\ar@{-->}[d]&\\
&&&m\ar@{-->}[d]&&&&f\ar@{-->}[d]&&&&&&&&&.&&\\
&&&&&&&&&&&&&&&&&&\\
}
\end{align}
This makes $(\Mor(\vphantom{M}_\calA M_{\calB}, \vphantom{M'}_\calA M'_{\calB}),\bdy)$ into a chain complex.
The $\calA_{\infty}$ analogue of chain maps are morphisms $f$ such that $\bdy(f) = 0$,
and we call these {\em $\calA_{\infty}$-homomorphisms}.
We say that morphisms $f$ and $f'$ are 
{\em $\calA_{\infty}$-homotopic} if $f -f' \in \im(\bdy)$.
Given another morphism $g: \vphantom{M'}_\calA M' _{\calB} \rightarrow \vphantom{M''}_{\calA}M''_{\calB}$, define
\begin{align}\label{A infinity homomorphism composition}
\xymatrix
{
&&\ar@{=>}[d]&\ar@{-->}[dd]&\ar@{=>}[d]&\\
&&\Delta_2\ar@/_/@{=>}[dr]\ar@/_/@{=>}[ddr]&&\Delta_2\ar@/^/@{=>}[dl]\ar@/^/@{=>}[ddl]&\\
g\circ f&=&&f\ar@{-->}[d]&&\\
&&&g\ar@{-->}[d]&&\\
&&&&.&
} 
\end{align}
We say that an $\calA_{\infty}$-homomorphism $f$ is an {\em $\calA_{\infty}$-homotopy equivalence} if 
there exists some $\calA_{\infty}$-homomorphism $g$ such that $g\circ f$ is $\calA_{\infty}$-homotopic to $\mathbb{I}_{\calM}$
and $f \circ g$ is $\calA_{\infty}$-homotopic to $\mathbb{I}_{\calM'}$.
Here $\mathbb{I}_{\vphantom{M}_\calA M_{\calB}}$ is the morphism $\vphantom{M}_\calA M_{\calB} \rightarrow \vphantom{M}_\calA M_{\calB}$ with
$\mathbb{I}_{\vphantom{M}_\calA M_{\calB}}(a_1\otimes...\otimes a_i\otimes x \otimes b_1 \otimes ... \otimes b_j)$ defined to be $x$ if $i = j = 0$
and zero otherwise (and $\mathbb{I}_{\vphantom{M}_\calA M'_{\calB}}$ is defined similarly).

\begin{theorem}\label{isotopyinvariance}
Let $\phi_b$ and $\phi_{nb}$ be diffeomorphisms representing the same element of $\MCG_0(F)$. Then 
$M(\phi_b)$ and $M(\phi_{nb})$ are homotopy equivalent as $\calA_{\infty}$-bimodules.
\end{theorem}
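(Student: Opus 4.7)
The strategy is to reduce to a single ``elementary'' isotopy (a bigon birth/death) and then to construct the $\calA_\infty$-homotopy equivalence explicitly by counting triangle-shaped polygons in a triple diagram, in the spirit of continuation/handleslide maps in Heegaard Floer theory.

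First, I would choose a generic smooth path $\{\phi_t\}_{t \in [0,1]}$ in $\MCG_0(F)$ joining $\phi_b$ and $\phi_{nb}$. The $\alpha$-curves are fixed throughout, and the endpoints of $\boldsymbol{\beta}_t := \phi_t(\eta_1 \cup \cdots \cup \eta_{2(g+b-1)})$ on $\bdy F^\circ(\calZ)$ are fixed as well, so only the interiors of the $\beta$-curves move. By genericity there are finitely many critical times $t_1 < \cdots < t_N$, at each of which a single interior bigon between some $\alpha_j$ and $\beta_i$ is born or dies; away from these times the intersection pattern and polygon counts remain canonically identified, giving strict isomorphisms of $\calA_\infty$-bimodules. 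It therefore suffices to build a homotopy equivalence across a single bigon birth time $t_k$.

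Write $\boldsymbol{\beta}_- := \boldsymbol{\beta}_{t_k - \epsilon}$ and $\boldsymbol{\beta}_+ := \boldsymbol{\beta}_{t_k + \epsilon}$ and put the triple $(\boldsymbol{\alpha}, \boldsymbol{\beta}_-, \boldsymbol{\beta}_+)$ in general position. For each pair $\beta_i^-, \beta_i^+$ of near-parallel arcs there is a canonical ``top'' intersection point $\Theta_i$. Define $f: M(\phi_-) \to M(\phi_+)$ by letting $f_{m,1,n}(\sigma_m, \ldots, \sigma_1, x, \rho_1, \ldots, \rho_n)$ count mod $2$ the triangle-polygons $P: \D^2 \to F^\circ(\calZ)$ whose left side alternates from $\boldsymbol{\beta}_-$ to $\boldsymbol{\beta}_+$ with exactly one transition corner at each $\Theta_i$, whose right side runs along $\boldsymbol{\alpha}$, and with boundary chord sequences matching $(\sigma_1,\ldots,\sigma_m)$ and $(\rho_1,\ldots,\rho_n)$ as in the definition of the bimodule structure maps. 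Define $g: M(\phi_+) \to M(\phi_-)$ symmetrically by interchanging the roles of $\boldsymbol{\beta}_-$ and $\boldsymbol{\beta}_+$.

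That $f$ is an $\calA_\infty$-homomorphism is proven by constructing a fixed-point-free involution on the set of glued-polygon configurations contributing to \eqref{A infinity homomorphism relations}, following verbatim the template of Theorem~\ref{MIsAInfinity}, with the transition corners at the $\Theta_i$ giving rise to additional but combinatorially analogous degenerations. That $g \circ f$ and $f \circ g$ are $\calA_\infty$-homotopic to $\mathbb{I}$ is shown by introducing one further small perturbation $\boldsymbol{\beta}_-''$ of $\boldsymbol{\beta}_-$, counting rectangle-polygons in the quadruple diagram $(F^\circ(\calZ), \boldsymbol{\alpha}, \boldsymbol{\beta}_-, \boldsymbol{\beta}_+, \boldsymbol{\beta}_-'')$, and exhibiting the resulting map as an explicit nullhomotopy via yet another involution argument on a larger polygon-configuration set. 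The main obstacle is this last step: one must match each boundary degeneration of a rectangle-polygon with either a term of $g \circ f$, a contribution to the identity coming from ``small double-triangles'' hugging a pair of top generators, or a piece of the $\calA_\infty$-relation for the nullhomotopy itself. The triangle/rectangle formalism makes this tractable, but the combinatorial bookkeeping is substantially heavier than for Theorem~\ref{MIsAInfinity}.
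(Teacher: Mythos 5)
Your proposal takes a genuinely different route from the paper. You reduce to a single bigon birth/death the same way the paper does (via finger moves), but then you build the homotopy equivalence directly by counting triangles in a triple diagram $(\boldsymbol{\alpha},\boldsymbol{\beta}_-,\boldsymbol{\beta}_+)$ with $\Theta$-insertions at ``top'' generators, and you propose to prove that $g\circ f$ and $f\circ g$ are homotopic to the identity by counting rectangles in a quadruple diagram. This is the continuation-map / polygon-map strategy familiar from analytic Lagrangian and Heegaard Floer theory, transplanted into the combinatorial setting. The paper instead works entirely on the algebraic side: it invokes a homological perturbation lemma (Lemma \ref{hptlemma}, the $\calA_\infty$-bimodule version of Lemma 8.6 of [\ref{LOT10b}]), constructs an explicit basis change isolating the dying bigon, verifies that $\tf_1,\tg_1$ are inverse chain homotopy equivalences (Lemma \ref{inversehomotopyequivalences}), and then shows via the purely combinatorial bijection of Lemma \ref{relatepolygons} that the transferred $\calA_\infty$-structure coincides with the one already defined on $M(\phi_{nb})$. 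The payoff of the paper's route is that it completely avoids defining and controlling triangle and rectangle maps; the homotopy equivalence data is produced automatically by the perturbation lemma once the underlying chain complexes are matched, and the only geometric input needed is the bijection of Lemma \ref{relatepolygons}, which is local and elementary.

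Your approach is not wrong in spirit, but the proposal leaves two substantive issues unaddressed, and they are not mere bookkeeping. First, the existence and placement of the $\Theta_i$ generators is delicate here: the $\beta$-arcs have fixed endpoints on $\bdy F^\circ(\calZ)$, and a small interior perturbation of an arc rel endpoints generically produces either zero or two interior intersection points, so one must carefully choose push-offs for every $\beta_i$ (not just the one involved in the finger move) to guarantee a consistent ``top'' generator, and one must then check that the resulting triangle map is independent of these auxiliary choices. Second, the proposed nullhomotopy of $g\circ f - \mathbb{I}$ by counting rectangles in a quadruple diagram requires a substantially harder involution argument than Theorem \ref{MIsAInfinity}: the degenerations now include breaking off a constant triangle at a $\Theta$-corner, strip-breaking between the two copies of $\boldsymbol{\beta}_-$, and boundary degenerations carrying chord insertions, and one must show that all of these cancel in pairs or recombine into the identity. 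You acknowledge the difficulty but give no mechanism for matching these degenerations, which is precisely the step the paper engineers around. As a check on the plausibility of your plan, it would help to at least verify on the example of Figure \ref{FingerMove} that the triangle count reproduces $\tf_1$ and that the rectangle count reproduces the homotopy $T$ appearing in the paper's proof of Lemma \ref{inversehomotopyequivalences}.
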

The main ingredient of the proof will be Lemma 8.6 of [\ref{LOT10b}], suitably modifed for $\calA_{\infty}$-bimodules:
\begin{lemma}\label{hptlemma}
Let $\vphantom{M}_{\calA}M_{\calB}$ be an $\calA_{\infty}$-bimodule over $\calA_{\infty}$-algebras $\calA,\calB$ over $\kk$,
let $M$ denote its underlying chain complex over $\kk$, and let $\tf_1: N \rightarrow M$ be a homotopy equivalence of chain complexes.
Then we can find
\begin{itemize}
 \item an $\calA_{\infty}$-bimodule structure $\vphantom{\calN}_{\calA}N_{\calB}$ on $N$ and
 \item an $\calA_{\infty}$-homotopy equivalence $F: \vphantom{N}_{\calA}N_{\calB} \rightarrow \vphantom{M}_{\calA}M_{\calB}$ with the property 
that $F_1 = \tf_1$.
\end{itemize}
Moreover, the structure map $\ovl{m}$ of $\vphantom{N}_{\calA}N_{\calB}$ is given as follows.
Let $\tg_1: M \rightarrow N$ be a homotopy inverse to $\tf_1$ and let $T: M \rightarrow M$ be a homotopy between $\tf_1 \circ \tg_1$ and $\mathbb{I}_M$.
For $a_1,...,a_m \in A,\;b_1,...,b_n \in B$, and $r \geq 1$, let 
\begin{align*}
&\mu_r^*\left((a_1\otimes...\otimes a_m)\otimes(b_1\otimes...\otimes b_n)\right) := \\
&\sum \left((a_{i_0+1}\otimes ... \otimes a_{i_1})\otimes (a_{i_1+1}\otimes ... \otimes a_{i_2}) \otimes ... \otimes (a_{i_{r-1}+1}\otimes...\otimes a_{i_r})\right) \\
&\;\;\;\;\otimes \left((b_{j_0+1}\otimes ... \otimes b_{j_1})\otimes (b_{j_1+1}\otimes ... \otimes b_{j_2}) \otimes ... \otimes (b_{j_{r-1}+1}\otimes...\otimes b_{j_r})\right), 
\end{align*}
where the sum is over all $0 = i_0 \leq i_1 \leq ... \leq i_r = m,\;0 = j_0 \leq j_1 \leq ... \leq j_r = n$ such that 
$(i_k,j_k) \neq (i_{k-1},j_{k-1})$ for $1 \leq k \leq r$.
Let $a = (a_1 \otimes ... \otimes a_m)$ and $b = (b_1 \otimes ... \otimes b_n)$.
Then $\ovl{m}$ is given by
\begin{align}\label{hptmaps}
\xymatrix@=1em
{
\ovl{m}(a \otimes y \otimes b) & = & a\ar@/_/@{=>}[dr] && b\ar@/^/@{=>}[dl] &+& a\ar@/_/@{=>}[dr] && b\ar@/^/@{=>}[dl] &+& a\ar@/_/@{=>}[dr] && b\ar@/^/@{=>}[dl] &+\;...\\
&&& \mu^*_1\ar@/^1pc/@{=>}[ddd]\ar@/_1pc/@{=>}[ddd] &&&& \mu^*_2\ar@/^1pc/@{=>}[ddd]\ar@/_1pc/@{=>}[ddd]\ar@/^2pc/@{=>}[ddddd]\ar@/_2pc/@{=>}[ddddd] &&&& \mu^*_3\ar@/^1pc/@{=>}[ddd]\ar@/_1pc/@{=>}[ddd]\ar@/^2pc/@{=>}[ddddd]\ar@/_2pc/@{=>}[ddddd]\ar@/^3pc/@{=>}[ddddddd]\ar@/_3pc/@{=>}[ddddddd]\\
&&& y\ar@{-->}[d] &&&& y\ar@{-->}[d] &&&& y\ar@{-->}[d] && \\
&&& \tf_1\ar[d] &&&& \tf_1\ar[d] &&&& \tf_1\ar[d]\\
&&& m\ar[d] &&&& m\ar[d] &&&& m\ar[d]\\
&&& \tg_1\ar@{-->}[d] &&&& T\ar[d] &&&& T\ar[d]\\
&&&   &&&& m\ar[d] &&&& m\ar[d]\\
&&&   &&&& \tg_1\ar@{-->}[d] &&&& T\ar[d]\\
&&&   &&&&  &&&& m\ar[d]\\
&&&   &&&&  &&&& \tg_1\ar@{-->}[d]\\
&&&&&&&&&&&
} 
\end{align}
\end{lemma}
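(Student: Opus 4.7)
The plan is to use the standard homological perturbation / homotopy transfer strategy, verifying that the explicit tree formula (\ref{hptmaps}) defines an $\calA_\infty$-bimodule structure on $N$ and then exhibiting the required homotopy equivalence. This is essentially the argument of [\ref{LOT10b}, Lemma 8.6] enlarged to allow for a right action as well as a left action.

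I would begin by interpreting each summand of $\ovl{m}$ as a labelled zigzag: starting with $\tilde{f}_1$ applied to $y$, then alternating $m$-nodes and $T$-nodes, terminating with $\tilde{g}_1$; the tensors $a$ and $b$ are meanwhile split by $\mu_r^*$ and fed into the successive $m$-nodes. To check the bimodule relation (\ref{ainfbimodulerelations}), I would substitute this formula into $\bdy(\ovl{m})$ and partition the resulting terms into three types: (a) terms arising from $\ovl{D}^\calA$ or $\ovl{D}^\calB$ acting on the inputs, (b) terms arising from the $\calA_\infty$-relation of $m$ applied at an internal $m$-node, and (c) terms arising from a differential $d$ hitting an internal $T$-node. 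Type (a) terms combine cleanly with the splittings encoded in $\mu_r^*$ and are absorbed by the known $\calA_\infty$-relations for $m$ on $M$; type (b) produces sums of zigzags of the same shape but with two adjacent $m$-nodes merged or separated by a $d$.

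The crucial step is to rewrite every $d \circ T$ (and likewise $T \circ d$) using the homotopy identity $d \circ T + T \circ d = \tilde{f}_1 \circ \tilde{g}_1 + \mathbb{I}_M$, valid over $\F_2$. The $\mathbb{I}_M$ term collapses an internal $T$-edge and produces exactly the composition of two adjacent $m$-nodes that arose from a type (b) cancellation; the $T \circ d$ term cancels with another type (b) term at the adjacent $m$-node; and the $\tilde{f}_1 \circ \tilde{g}_1$ term splits the zigzag at the $T$-edge into two shorter zigzags glued at a point in $N$, producing exactly one summand of the nested $\ovl{m}(\cdots, \ovl{m}(\cdots), \cdots)$ which appears in the bimodule relation for $\ovl{m}$. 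Carefully matching these contributions yields a fixed-point-free involution on the set of labelled terms, in the same spirit as the involution constructed in the proof of Theorem \ref{MIsAInfinity}.

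With $\ovl{m}$ shown to define an $\calA_\infty$-structure, I would next construct $F$ by a parallel tree formula: the same zigzags but terminating with a final $m$-node whose output lies in $M$ (dropping $\tilde{g}_1$), so that $F_1 = \tilde{f}_1$ corresponds to the trivial tree. A homotopy inverse $G$ and homotopies witnessing $G \circ F \simeq \mathbb{I}_N$ and $F \circ G \simeq \mathbb{I}_M$ are constructed from analogous tree formulas using $T$ for the homotopies; the verifications of $\bdy(F) = 0$ and the homotopy identities follow the same cancellation pattern as for $\ovl{m}$. The main obstacle throughout is the combinatorial bookkeeping: one must exhibit an explicit matching on the set of (zigzag shape, input splitting, marked node) triples that shows all unwanted terms cancel. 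Because we work over $\F_2$ there are no signs to track, so this reduces to a careful counting argument rather than a delicate sign calculation.
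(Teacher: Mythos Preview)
The paper does not actually prove Lemma~\ref{hptlemma}; it is stated as a bimodule adaptation of Lemma~8.6 of [\ref{LOT10b}] and then used as a black box in the proof of Theorem~\ref{isotopyinvariance}. So there is no ``paper's own proof'' to compare against.

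That said, your sketch is the standard homological perturbation argument and is essentially what the cited reference carries out in the one-sided case. The organization you propose --- expanding $\bdy(\ovl{m})$, invoking the $\calA_\infty$-relation for $m$ at each internal node, and then using $dT + Td = \tf_1\tg_1 + \mathbb{I}_M$ to split or collapse zigzags --- is exactly the mechanism behind homotopy transfer, and the bimodule version introduces no new phenomena beyond having two independent input streams governed by $\mu_r^*$. Your observation that the $\tf_1\tg_1$ term produces the nested $\ovl{m}(\cdots,\ovl{m}(\cdots),\cdots)$ summand is the key point. One small caution: in the matching of terms you should also account for the boundary cases where $d$ lands on the initial $\tf_1$ or the terminal $\tg_1$ (using that these are chain maps), and you should note that the condition $(i_k,j_k)\neq(i_{k-1},j_{k-1})$ in $\mu_r^*$ is what prevents an $m$-node from receiving no algebra inputs and hence degenerating to $m_{0,1,0}=d$, which would otherwise spoil the cancellation. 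With those bookkeeping points made explicit, your outline would constitute a complete proof.
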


By a standard result of curves on surfaces (see for example [\ref{MCG}]), $\calD(\phi_b)$ and $\calD(\phi_{nb})$ differ by a sequence
of finger moves, i.e. isotopies adding or removing an innermost bigon. Then it suffices to consider the case that $\calD(\phi_{nb})$ is obtained from $\calD(\phi_b)$ 
by removing an innermost bigon $B$. 
In particular we assume that an isotopy between $\calD(\phi_b)$ and $\calD(\phi_{nb})$ is the identity outside of a small neighborhood of $B$, and that the local picture inside the neighborhood
is as in Figure \ref{FingerMove}.
It will also be convenient to further assume that the symmetric difference between the $\beta$ curves of $\calD(\phi_b)$ and those of $\calD(\phi_{nb})$
has exactly two components, i.e. there are no superfluous bigons created between the $\beta$ curves of the two diagrams.

\begin{figure}
\begin{center}
 \includegraphics[scale=.6]{./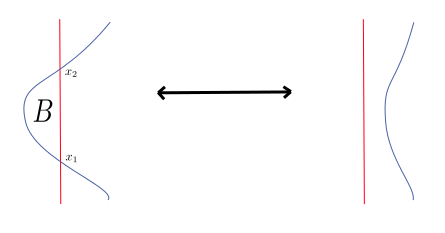}
\end{center}
\caption{A ``finger move'' showing how $\calD(\phi_b)$ and $\calD(\phi_{nb})$ are related.}
\label{FingerMove}
\end{figure}

Let $\{x_1,...,x_n\}$ denote the canonical basis for $M(\phi_b)$, where the bigon $B$ to be removed is from $x_1$ to $x_2$.
We will write $x_i \rightarrow x_j$ if $x_j$ is a summand of $m_1(x_i)$ (in the basis implied by the context).
We begin by introducing a new basis to isolate $B$. Specifically, let
\begin{align*}
\tilde{x}_1 &:= x_1\\
\tilde{x}_2 &:= m_1(x_1)\\
\tilde{x}_i &:= x_i + x_1\;\;\text{whenever}\;x_i \rightarrow x_2\;\;(\text{for}\;3 \leq i \leq n)\\
\tilde{x}_i &:= x_i\;\;\text{otherwise}
\end{align*}

The reader can easily verify that $\{\tilde{x}_1,...,\tilde{x}_n\}$ again forms a basis,
and that in this basis the only differential relation involving $\tx_1$ or $\tx_2$ is $m_1(\tx_1) = \tx_2$.
That is, representing the chain complex of $M(\phi_b)$ as a directed graph as in Figure \ref{GraphChangeBasis},
there are no arrows entering or leaving $\tx_1$ or $\tx_2$ except for a single arrow $\tx_1 \rightarrow \tx_2$.

\begin{figure}
\begin{center}
 \includegraphics[scale=.6]{./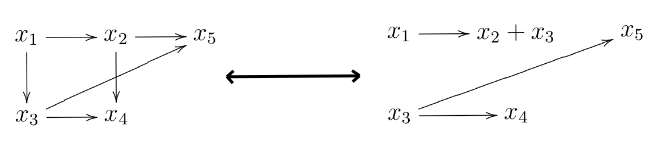}
\end{center}
\caption{Changing basis in a chain complex to isolate a bigon. The arrows here represent the differential.}
\label{GraphChangeBasis}
\end{figure}

Now let $\{y_3,...,y_n\}$ denote the canonical basis for $M(\phi_{nb})$, and observe that we have
a natural correspondence $y_i \leftrightarrow x_i$ between the generators for $i \geq 3$.
We define maps $f_1: M(\phi_{nb}) \rightarrow M(\phi_b)$ and $g_1: M(\phi_b) \rightarrow M(\phi_{nb})$ by
\begin{align*}
f_1(y_i) := x_i \;\; 3\leq i \leq n\;\;\;\;\;\;\;\;\;\;\;\;\;\;\; &g_1(x_i) := y_i\;\;3 \leq i \leq n\\
& g_1(x_i) := 0\;\; i = 1,2.
\end{align*}
We also define maps $\tf_1: M(\phi_{nb}) \rightarrow M(\phi_b)$ and $\tg_1: M(\phi_b) \rightarrow M(\phi_{nb})$ by
\begin{align*}
\tf_1(y_i) := \tx_i \;\; 3\leq i \leq n\;\;\;\;\;\;\;\;\;\;\;\;\;\;\; &\tg_1(\tx_i) := y_i\;\;3 \leq i \leq n\\
& \tg_1(\tx_i) := 0\;\; i = 1,2.
\end{align*}

\begin{lemma}\label{inversehomotopyequivalences}
The maps $\tf_1$ and $\tg_1$ are inverse homotopy equivalences of the underlying chain complexes of $M(\phi_{nb})$ and $M(\phi_b)$. 
\end{lemma}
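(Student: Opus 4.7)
The plan is to exploit the key structural fact just asserted above the lemma: in the basis $\{\tx_1, \ldots, \tx_n\}$, the only nonzero matrix entry of $m_1$ on $M(\phi_b)$ involving $\tx_1$ or $\tx_2$ is the single arrow $m_1(\tx_1) = \tx_2$. Equivalently, $M(\phi_b)$ splits as a direct sum of chain complexes $V \oplus W$, where $V := \F_2\langle \tx_1, \tx_2\rangle$ is acyclic and $W := \F_2\langle \tx_3, \ldots, \tx_n\rangle$. I will show that the correspondence $\tx_i \leftrightarrow y_i$ for $i \geq 3$ identifies $W$ with $M(\phi_{nb})$ as chain complexes. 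Under these identifications $\tf_1$ becomes the inclusion $W \hookrightarrow M(\phi_b)$ followed by the isomorphism, and $\tg_1$ becomes the projection $M(\phi_b) \twoheadrightarrow W$; both are then manifestly chain maps.

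With the splitting in hand, the required identities are straightforward. The composition $\tg_1 \circ \tf_1$ sends $y_i \mapsto \tx_i \mapsto y_i$, so $\tg_1 \circ \tf_1 = \mathbb{I}_{M(\phi_{nb})}$ on the nose. For the other composition I would define $T : M(\phi_b) \to M(\phi_b)$ by $T(\tx_2) := \tx_1$ and $T(\tx_j) := 0$ for $j \neq 2$. A direct calculation yields $(m_1 T + T m_1)(\tx_1) = T(\tx_2) = \tx_1$, $(m_1 T + T m_1)(\tx_2) = m_1(\tx_1) = \tx_2$, and $(m_1 T + T m_1)(\tx_j) = T(m_1(\tx_j)) = 0$ for $j \geq 3$, the last because $m_1(\tx_j) \in W$ while $T$ vanishes on $W$. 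Hence $m_1 T + T m_1 = \mathbb{I}_V$, which agrees with $\mathbb{I}_{M(\phi_b)} - \tf_1 \circ \tg_1$ since $\tf_1 \circ \tg_1$ is the projection onto $W$.

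The heart of the argument is establishing the chain isomorphism $W \cong M(\phi_{nb})$, which reduces to a combinatorial bijection between chord-free polygons in the two diagrams. Outside a small neighborhood of the bigon $B$, the diagrams $\calD(\phi_b)$ and $\calD(\phi_{nb})$ agree verbatim, so polygons supported there match in both. Inside the neighborhood, the local model of Figure~\ref{FingerMove} together with the standing hypothesis that no superfluous bigons are created should ensure that every chord-free polygon in $\calD(\phi_{nb})$ extends uniquely to a polygon in $\calD(\phi_b)$, and that the only extra polygons in $\calD(\phi_b)$ are either the bigon $B$ itself (giving $\tx_1 \to \tx_2$) or polygons terminating at $x_2$ or emanating from an $x_i$ through $x_2$ --- precisely the contributions absorbed by the basis change $x_i \mapsto x_i + x_1$. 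The hard part will be making this local case analysis rigorous and ruling out exotic polygons that might traverse the finger region in unexpected ways; the innermost hypothesis on $B$ is what ultimately forbids these.
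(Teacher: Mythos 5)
Your structural skeleton is identical to the paper's: you use the direct sum splitting $M(\phi_b) = V \oplus W$ in the $\tilde{x}$-basis, identify $\tf_1, \tg_1$ with the inclusion of and projection onto $W$, and write down exactly the same homotopy $T(\tx_2) = \tx_1$, $T(\tx_j)=0$ for $j\neq 2$. The verification that $T$ works is routine, and you did it correctly. So the only remaining content, as you correctly identify, is proving the chain isomorphism $W \cong M(\phi_{nb})$, i.e.\ that $\tg_1$ (equivalently $\tf_1$) commutes with $m_1$.

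That is exactly where your sketch has a real gap, not just a missing epsilon of rigor. You assert that ``every chord-free polygon in $\calD(\phi_{nb})$ extends uniquely to a polygon in $\calD(\phi_b)$,'' with the only extra polygons in $\calD(\phi_b)$ being the bigon $B$ itself and some polygons into/out of $x_2$ that the basis change absorbs. This is not the correct picture. The finger move changes the $\beta$ curve in a way that can make a \emph{single} bigon from $y_i$ to $y_j$ in $\calD(\phi_{nb})$ correspond to a \emph{concatenation} of two bigons in $\calD(\phi_b)$: one from $x_i$ to $x_2$, glued through the corner at $B$ to one from $x_1$ to $x_j$. (In general, with chords, a polygon in $\calD(\phi_{nb})$ corresponds to an $r$-tuple of polygons in $\calD(\phi_b)$ chained through $x_2$ and $x_1$.) This is precisely what the paper's Lemma \ref{relatepolygons} records with the sets $S_r^{(\sigma),(\rho)}(x_i,x_j)$, and its Corollary \ref{relatebigons} then gives the resulting formula
\begin{align*}
m_1(y_i) \;=\; g_1(m_1(x_i)) \;+\; a_2^i\, g_1(m_1(x_1)),
\end{align*}
where $a_2^i$ is the coefficient of $x_2$ in $m_1(x_i)$. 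The second term here is exactly the contribution of the concatenated bigons, and it is not absorbed by ``uniqueness of extension''; it is absorbed by the algebraic identity
\begin{align*}
\tg_1(m_1(\tx_i)) \;=\; g_1(m_1(x_i)) + a_2^i\, g_1(m_1(x_1)),
\end{align*}
which one has to compute explicitly using the definitions of $\tx_i$ and $\tg_1$, as the paper does. Without identifying this extra class of concatenated polygons and matching it against the basis change term by term, the claim that $\tg_1$ is a chain map is unjustified. So the proposal has the right top-level strategy but skips the one genuinely nontrivial step, and the intuition it substitutes for that step is wrong in a way that would lead you astray if you tried to flesh it out.
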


Before proving this lemma, it will be useful to give a general result relating polygons in $\calD(\phi_{nb})$ to polygons in $\calD(\phi_b)$.
Let $S_1^{(\sigma_1,...,\sigma_m),(\rho_1,...,\rho_n)}(x_i,x_j)$ denote the set of polygons in $\calD(\phi_b)$ from $x_i$ to $x_j$ through
$(\sigma_1,...,\sigma_m)$ and $(\rho_1,...,\rho_n)$.
For $r \geq 2$, let $S_r^{(\sigma_1,...,\sigma_m),(\rho_1,...,\rho_n)}(x_i,x_j)$ denote the set of tuples $(P_1,...,P_r)$ of polygons in $\calD(\phi_b)$ such that
\begin{itemize}
 \item $P_1$ is a polygon, distinct from $B$, from $x_i$ to $x_2$ through $(\sigma_{s_0+1},...,\sigma_{s_1})$ and $(\rho_{t_0+1},...,\rho_{t_1})$,

 \item $P_k$ is a polygon, distinct from $B$, from $x_1$ to $x_2$ through $(\sigma_{s_{k-1}+1},...,\sigma_{s_k})$ and $(\rho_{t_{k-1}+1},...,\rho_{t_k})$ for $2 \leq k \leq r-1$, and 

 \item $P_r$ is a polygon, distinct from $B$, from $x_1$ to $x_j$ through $(\sigma_{s_{r-1}+1},...,\sigma_{s_r})$ and $(\rho_{t_{r-1}+1},...,\rho_{t_r})$

\end{itemize}
for some $0 = s_0 \leq s_1 \leq ... \leq s_r = m$ and $0 = t_0 \leq t_1 \leq ... \leq t_r = n$.

We will also set
\begin{align*}
S^{(\sigma_1,...,\sigma_m),(\rho_1,...,\rho_n)}(x_i,x_j) := \cup_{r \geq 1} S_r^{(\sigma_1,...,\sigma_m),(\rho_1,...,\rho_n)}(x_i,x_j).
\end{align*}

\begin{lemma}\label{relatepolygons} 
There is a natural bijective correspondence between the elements of $S^{(\sigma_1,...,\sigma_m),(\rho_1,...,\rho_n)}(x_i,x_j)$
and polygons in $\calD(\phi_{nb})$ from $y_i$ to $y_j$ through $(\sigma_1,...,\sigma_m)$ and $(\rho_1,...,\rho_n)$ (for $i,j \geq 3$).
\end{lemma}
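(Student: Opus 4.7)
The plan is to construct mutually inverse maps $\Phi$ and $\Psi$ via a cut-and-paste argument, exploiting the fact that $\calD(\phi_b)$ and $\calD(\phi_{nb})$ agree outside a small disk neighborhood $N$ of the innermost bigon $B$. Inside $N$, the $\beta$-finger of $\calD(\phi_b)$ crosses the $\alpha$-arc at $x_1$ and $x_2$, whereas in $\calD(\phi_{nb})$ the corresponding arc of $\beta$ lies entirely on one side of $\alpha$. The chords $\sigma_k, \rho_k$ live on $\bdy F^{\circ}(\calZ)$ and are disjoint from $N$, so both maps will automatically respect the chord sequences.

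To define $\Phi$ from polygons in $\calD(\phi_{nb})$ to elements of $S^{(\sigma_1,\ldots,\sigma_m),(\rho_1,\ldots,\rho_n)}(x_i,x_j)$, compose a given polygon $P : \D^2 \to \calD(\phi_{nb})$ with the isotopy that introduces the $\beta$-finger, obtaining an immersion $\tilde P : \D^2 \to \calD(\phi_b)$. The preimage under $\tilde P$ of the piece of the $\beta$-finger that crosses from one side of the $\alpha$-arc to the other is a disjoint union of properly embedded arcs in $\D^2$, each with both endpoints on $\gamma_R$ and mapping to $x_1$ or $x_2$. Cutting $\D^2$ along these arcs produces $r$ sub-disks, ordered from $-i$ to $i$, whose restricted maps are polygons $P_1,\ldots,P_r$ in $\calD(\phi_b)$; a parity check along $\gamma_R$ shows the endpoints of consecutive pieces alternate as $x_2, x_1, x_2, x_1,\ldots$ exactly as in the definition of $S_r$. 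Innermostness of $B$ together with the assumption that the symmetric difference of the $\beta$-curves has only two components forbids any $P_k$ from equaling $B$ itself. Conversely, $\Psi$ is defined by reversing the cut: given $(P_1,\ldots,P_r)$, glue the domains $\Dom(P_k)$ sequentially by inserting $r-1$ copies of $\Dom(B)$ as bridges, identifying the $\alpha$-boundary of each bridge with a short arc of $\gamma_R(\Dom(P_k))$ near its $x_2$-endpoint and a short arc of $\gamma_R(\Dom(P_{k+1}))$ near its $x_1$-endpoint; then apply the reverse isotopy to deflate the $\beta$-finger, and the resulting immersion is a polygon in $\calD(\phi_{nb})$ from $y_i$ to $y_j$.

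That $\Phi \circ \Psi$ and $\Psi \circ \Phi$ are the identity follows from the fact that cutting along the $\beta$-finger and gluing in bigon-bridges are mutually inverse geometric operations. The main obstacle is verifying that the gluing step in $\Psi$ really produces a valid polygon: one must check that the bridged domain is a disk, that the combined map is an immersion with no extra marked points on $\gamma_L$ or $\gamma_R$ beyond those coming from the $\sigma_k, \rho_k$, and that the attachment is compatible with orientations. This requires a careful local analysis near each attaching arc, which uses the innermostness of $B$ to rule out any spurious intersections being created in the interior of the bridged domain.
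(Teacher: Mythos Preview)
Your overall strategy---cut a polygon in $\calD(\phi_{nb})$ along preimages of the new $\beta$-finger, and conversely bridge a tuple $(P_1,\ldots,P_r)$ by inserting small regions---is exactly the paper's approach. But you have systematically swapped the left and right sides, and this is not cosmetic: as written the construction does not produce valid polygons.

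Recall that $\gamma_L$ is the $\boldsymbol{\beta}$-side and $\gamma_R$ is the $\boldsymbol{\alpha}$-side. The $\beta$-finger is a piece of a $\beta$-curve, so when you form $\tilde P$ and take the preimage of the finger, the resulting arcs in $\D^2$ have their endpoints on $\gamma_L$, not $\gamma_R$; indeed the paper cuts along ``components of $(P')^{-1}(\boldsymbol{\beta})$ intersecting the left side of $\Dom(P')$.'' Consequently, in the reverse direction the bridges must be attached along segments of the \emph{left} sides of $\Dom(P_k)$ and $\Dom(P_{k+1})$ near the corners $i$ and $-i$, not along $\gamma_R$.

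There is a second, related problem with your choice of bridge. Inserting a copy of $\Dom(B)$ and attaching its $\alpha$-side to $\gamma_R$ cannot yield a polygon in $\calD(\phi_{nb})$: the left boundary of the glued object would still lie on the $\beta$-curves of $\calD(\phi_b)$, and your final ``reverse isotopy'' step does not repair this, since the portions of $\gamma_L(P_k)$ near the finger do not lie on $\beta$-curves of $\calD(\phi_{nb})$ after the isotopy. The correct bridge is the rectangular region $R$ obtained by superimposing the two diagrams: $R$ has three $\beta$-sides (two on $\boldsymbol{\beta}(\phi_b)$, one on $\boldsymbol{\beta}(\phi_{nb})$) and one $\alpha$-side shared with $B$. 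One glues the two $\boldsymbol{\beta}(\phi_b)$-sides of $R$ to segments of $\gamma_L(\Dom(P_k))$ and $\gamma_L(\Dom(P_{k+1}))$; the remaining $\boldsymbol{\beta}(\phi_{nb})$-side of $R$ then becomes part of the new left boundary, which now lies on $\boldsymbol{\beta}(\phi_{nb})$ as required. With these corrections your argument is essentially the paper's.
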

\begin{proof}
For a polygon $P: \D^2 \rightarrow \calD(\phi_{nb})$ from $y_i$ to $y_j$ through $(\sigma_1,...,\sigma_m)$ and $(\rho_1,...,\rho_n)$,
let $P': \D^2 \rightarrow \calD(\phi_b)$ be the corresponding map into $\calD(\phi_b)$.
Observe that the components of $(P')^{-1}(\boldsymbol{\beta})$ intersecting the left side of $\Dom(P')$
cut $\Dom(P')$ into various connected domains
(here we are implicitly relying on the symmetric difference assumption above to avoid unnecessary ``ripples'' between the $\beta$ curves).
Let $D_1,...,D_r$ denote those domains whose 
boundaries map entirely into $\boldsymbol{\alpha} \cup \boldsymbol{\beta} \cup \bdy \calD(\phi_b)$. Then the restrictions
$(P'|_{D_1},...,P'|_{D_r})$ define an element of $S_r^{(\sigma_1,...,\sigma_m),(\rho_1,...,\rho_n)}(x_i,x_j)$.

Conversely, suppose $(P_1,...,P_r) \in S_r^{(\sigma_1,...,\sigma_m),(\rho_1,...,\rho_n)}(x_i,x_j)$.
Superimposing $\calD(\phi_b)$ and $\calD(\phi_{nb})$, there is a rectangular region $R$ which has three $\beta$ sides and one $\alpha$ side,
where the $\alpha$ side is also a side of $B$.
There is a natural way to glue a copy of $R$ along one of its sides to a segment of the left side of $P_k$,
and along another side to a segment of the left side of $P_{k+1}$, for $1 \leq k < r$, to obtain
a polygon 
\begin{align*}
P: \Dom(P_1) \cup R \cup \Dom(P_2) \cup ... \cup R \cup \Dom(P_r): \rightarrow \calD(\phi_{nb})
\end{align*}
 from $g_1(\IP(P_1))$ to $g_1(\TP(P_r))$ through $(\sigma_1,...,\sigma_m)$ and $(\rho_1,...,\rho_n)$.

The reader can easily check that these operations are inverses.
\end{proof}

Suppose for each $i$ we have 
\begin{align*}
m_1(x_i) = a_1^ix_1 + a_2^ix_2 + ... + a_n^ix_n,
\end{align*}
where $a_k^i \in \F_2$ for $1 \leq k \leq n$.

\begin{corollary}\label{relatebigons}
For $i \geq 3$, we have
\begin{align*}
m_1(y_i) = g_1(m_1(x_i)) + a_2^ig_1(m_1(x_1)). 
\end{align*}
\end{corollary}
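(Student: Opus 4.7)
The plan is to apply Lemma \ref{relatepolygons} with $m=n=0$ (empty chord sequences) to enumerate the bigons defining $m_1(y_i)$ in $\calD(\phi_{nb})$ via data in $\calD(\phi_b)$, and then to match the resulting count with the right-hand side coefficient by coefficient. Since both sides lie in $M(\phi_{nb})$ (spanned by $y_3,\ldots,y_n$), it suffices to fix $j$ with $3 \leq j \leq n$ and verify that the coefficients of $y_j$ agree.

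By the bijection of Lemma \ref{relatepolygons}, the coefficient of $y_j$ in $m_1(y_i)$ equals $\#_{\F_2} S^{\emptyset,\emptyset}(x_i,x_j) = \sum_{r \geq 1}\#_{\F_2} S_r^{\emptyset,\emptyset}(x_i,x_j)$. The $r=1$ stratum counts bigons $x_i \to x_j$ in $\calD(\phi_b)$ and contributes $a_j^i$. The $r=2$ stratum consists of pairs $(P_1,P_2)$ where $P_1$ is a bigon from $x_i$ to $x_2$ distinct from $B$ and $P_2$ is a bigon from $x_1$ to $x_j$ distinct from $B$. Since $i,j \geq 3$, neither $P_1$ nor $P_2$ can coincide with $B$ (whose endpoints are $x_1,x_2$), so this stratum automatically contributes $a_2^i \cdot a_j^1$.

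The key step is to argue the $r \geq 3$ strata contribute $0$ in $\F_2$. Each such tuple requires a middle polygon $P_k$ which is a bigon from $x_1$ to $x_2$ distinct from $B$. I would appeal to the local form of the finger move (Figure \ref{FingerMove}) together with the symmetric difference assumption: $x_1$ and $x_2$ are exactly the two intersection points created by the finger poking through an $\alpha$ arc, and the assumption that the symmetric difference of the $\beta$ curves has only two components rules out additional accidental bigons between these curves. Combined, this forces $B$ to be the unique bigon from $x_1$ to $x_2$ in $\calD(\phi_b)$, so $S_r^{\emptyset,\emptyset}(x_i,x_j)$ is empty for every $r \geq 3$.

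Summing, the coefficient of $y_j$ in $m_1(y_i)$ is $a_j^i + a_2^i a_j^1$. Since $g_1(x_k) = y_k$ for $k \geq 3$ and $g_1(x_1) = g_1(x_2) = 0$, expanding the right-hand side gives $g_1(m_1(x_i)) + a_2^i g_1(m_1(x_1)) = \sum_{k \geq 3}(a_k^i + a_2^i a_k^1)y_k$, which has the same coefficient at $y_j$, completing the proof. The main obstacle is the geometric uniqueness argument for $B$ as a bigon from $x_1$ to $x_2$; once that is established, the rest is routine bookkeeping with Lemma \ref{relatepolygons}.
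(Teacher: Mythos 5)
Your proposal is correct and follows essentially the same route as the paper: apply Lemma \ref{relatepolygons} with empty chord sequences, observe that $S_1$ and $S_2$ produce the two terms and that $S_r$ for $r\geq 3$ vanishes because $B$ is the unique bigon from $x_1$ to $x_2$. Your version merely spells out the coefficient bookkeeping that the paper leaves implicit; one small quibble is that the symmetric-difference assumption you cite concerns bigons between the two sets of $\beta$ curves (in $\calD(\phi_b)$ versus $\calD(\phi_{nb})$), not $\alpha$-$\beta$ bigons inside $\calD(\phi_b)$, so it is not quite the right thing to invoke for uniqueness of $B$ — but the paper asserts this uniqueness without justification as well, so you are at no disadvantage.
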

\begin{proof}
This follows from the fact that $S_r^{(),()}(x_i,x_j)$ is the empty set for $r \geq 3$ since $B$ is the unique bigon from $x_1$ to $x_2$.
The first term comes from $S_1^{(),()}(x_i,x_j)$ and the second term comes from $S_2^{(),()}(x_i,x_j)$.
\end{proof}

\begin{proof}[Proof of Lemma \ref{inversehomotopyequivalences}]
We begin by showing that $\tf_1$ and $\tg_1$ are chain maps, i.e. that they commute with the $m_1$ maps of $M(\phi_{nb})$ and $M(\phi_b)$.
Since $m_1(m_1(x_i)) = 0$, in particular $x_2$ is not a summand of $m_1(m_1(x_i))$, and therefore we can write
\begin{align*}
m_1(x_i) &= a_1^i(x_1 + x_1) + a_2^ix_2 + a_3^i\tx_3 + a_4^i\tx_4 + ... + a_n^i\tx_n\\
&= a_2^ix_2 + a_3^i\tx_3 + a_4^i\tx_4 + ... + a_n^i\tx_n,
\end{align*}
since we have only added an even number of $x_1$'s.
Then we have
\begin{align*}
m_1(\tx_i) = m_1(x_i + a_2^ix_1) = a_2^i(x_2 + m_1(x_1)) + a_3^i\tx_3 + a_4^i\tx_4 + ... + a_n^i\tx_n
\end{align*}
and therefore
\begin{align*}
\tg_1(m_1(\tx_i)) &= a_2^i\tg_1(x_2 + m_1(x_1)) + a_3^iy_3 + a_4^iy_4 + ... + a_n^iy_n\\
&= a_2^i\tg_1(a_1^1x_1 + a_3^1x_3 + a_4^1x_4 + ... + a_n^1x_n) + g_1(m_1(x_i))\\
&= a_2^i\tg_1(a_1^1[x_1 + x_1] + a_3^1\tx_3 + a_4^1\tx_4 + ... + a_n^1\tx_n) + g_1(m_1(x_i))\\
&= a_2^i(a_3^1y_3 + a_4^1y_4 + ... + a_n^1y_n) + g_1(m_1(x_i))\\
&= a_2^ig_1(m_1(x_1)) + g_1(m_1(x_i))\\
&= m_1(\tg_1(\tx_i)),
\end{align*}
where the third line follows from another evenness argument and the last line follows from Corollary \ref{relatebigons}.
Then evidently $\tg_1\circ m_1 = m_1 \circ \tg_1$, and it easily follows from the definitions of $\tf_1$ and $\tg_1$ that $\tf_1 \circ m_1 = m_1 \circ \tf_1$. 

Since $\tg_1 \circ \tf_1$ is the identity, to complete the proof it suffices to find a homotopy $T: M(\phi_b) \rightarrow M(\phi_b)$ from 
$\tf_1 \circ \tg_1$ to $\mathbb{I}$, i.e. a map $T$ such that $\tf_1\circ \tg_1 - \mathbb{I} = T\circ m_1 + m_1\circ T$.
Since $\tf_1\circ \tg_1 - \mathbb{I}$ fixes $\tx_1$ and $\tx_2$ and sends every other $\tx_i$ to zero, one easily checks that $T$ can be chosen 
to be the map sending $\tx_2$ to $\tx_1$ and sending $\tx_1,\tx_3,...,\tx_n$ to zero.
\end{proof}
  
We are now finally in a position to prove Theorem \ref{isotopyinvariance}.
To aid the proof, let $\#_{x_k}: M(\phi_b) \rightarrow \F_2$ and $\#_{y_k}: M(\phi_{nb}) \rightarrow \F_2$ be functions counting the number 
of $x_k$ and $y_k$ summands respectively of the expansion of the input in the corresponding basis.

\begin{proof}[Proof of Theorem \ref{isotopyinvariance}]
By Lemma \ref{hptlemma}, $M(\phi_b)$ is $\calA_{\infty}$-homotopy equivalent to an induced $\calA_{\infty}$ structure $\ovl{m}$ on the underlying 
chain complex of $M(\phi_{nb})$, and therefore it will suffice to show that the map $\ovl{m}$ is the same as the usual structure map $m$ for $M(\phi_{nb})$.
In fact, since $\ovl{m}$ is given by (\ref{hptmaps}), where $T$ is the map defined at the end of the proof of Lemma \ref{inversehomotopyequivalences}
sending $\tx_2$ to $\tx_1$ and $\tx_1,\tx_3,...,\tx_n$ to zero,
unwinding the diagram shows that that we have
\begin{align}\label{initialmbarexpansion}
&\ovl{m}(\sigma_1,...,\sigma_m,y_i,\rho_1,...,\rho_n) = \sum_{j=3}^n y_j \;\#_{y_j}(\tg_1(m(\sigma_1,...,\sigma_m,\tx_i,\rho_1,...,\rho_n)))\\
&+\sum_{j=3}^n y_j \sum \#_{\tx_2}(m(\sigma_{s_0+1},...,\sigma_{s_1},\tx_i,\rho_{t_0+1},...,\rho_{t_1}))
\#_{\tx_2}(m(\sigma_{s_1+1},...,\sigma_{s_2},\tx_1,\rho_{t_1+1},...,\rho_{t_2}))...\nn\\
&\#_{\tx_2}(m(\sigma_{s_{r-2}+1},...,\sigma_{s_{r-1}},\tx_1,\rho_{t_{r-2}+1},...,\rho_{t_{r-1}}))
\#_{y_j}(\tg_1(m(\sigma_{s_{r-1}+1},...,\sigma_{s_r},\tx_1,\rho_{t_{r-1}+1},...,\rho_{t_r})))\nn,
\end{align}

where the last sum is over all $r \geq 2$ and all $0 = s_0 \leq s_1 \leq ... \leq s_r = m,\; 0 = t_0 \leq t_1 \leq ... \leq t_r = n$
such that $(s_k,t_k) \neq (s_{k-1},t_{k-1})$ for $1 \leq k \leq r$.

\begin{lemma}\label{countingequation}
For any $i,j \geq 3,\;\sigma = (\sigma_1,...,\sigma_s)$, and $\rho = (\rho_1,...,\rho_t)$, we have
\begin{align*}
&\#_{y_j}(\tg_1(m(\sigma,\tx_i,\rho))) =\\&\;\;\;\;\; \#_{x_j}(m(\sigma,x_i,\rho)) \;+ \#_{x_j}(m(\sigma,x_1,\rho))\#_{x_2}(m(x_i)) \;+ \\
&\;\;\;\;\;\#_{x_j}(m(x_1))\#_{x_2}(m(\sigma,x_i,\rho)) \;+ 
\#_{x_j}(m(x_1))\#_{x_2}(m(\sigma,x_1,\rho))\#_{x_2}(m(x_i)).
\end{align*}
 
\end{lemma}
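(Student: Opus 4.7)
My plan is to reduce the claim to a purely $\F_2$-linear identity about coefficient extraction in the $\tx$ basis. First I would observe that since $\tg_1$ sends $\tx_j\mapsto y_j$ for $j\geq 3$ and kills $\tx_1,\tx_2$, we have $\#_{y_j}(\tg_1(v)) = \#_{\tx_j}(v)$ for any $v\in M(\phi_b)$ and any $j\geq 3$, where $\#_{\tx_j}$ denotes the coefficient in the $\tx$ basis. Next, the definitions of $\tx_i$ for $i\geq 3$ combine into the uniform formula $\tx_i = x_i + \#_{x_2}(m(x_i))\,x_1$, so by $\F_2$-linearity
\[ m(\sigma,\tx_i,\rho) = m(\sigma,x_i,\rho) + \#_{x_2}(m(x_i))\cdot m(\sigma,x_1,\rho). \]
This reduces the task to computing $\#_{\tx_j}(w)$ in terms of the $x$-basis coefficients of $w$.

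The key step I would then carry out is the expansion of $x_2$ in the $\tx$ basis. Since $\tx_2 = m(x_1)$ and $B$ is the unique bigon from $x_1$ to $x_2$ (as invoked in Corollary~\ref{relatebigons}), the coefficient of $x_2$ in $\tx_2$ equals $1$; using $x_k = \tx_k + \#_{x_2}(m(x_k))\,\tx_1$ for $k\geq 3$ and $x_1 = \tx_1$, I can solve to obtain
\[ x_2 = \tx_2 + c\,\tx_1 + \sum_{k\geq 3}\#_{x_k}(m(x_1))\,\tx_k \]
for some $c\in\F_2$ whose value is irrelevant for $j\geq 3$. This yields the master identity
\[ \#_{\tx_j}(w) = \#_{x_j}(w) + \#_{x_2}(w)\cdot\#_{x_j}(m(x_1)) \quad\text{for all } j\geq 3,\ w\in M(\phi_b). \]

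Finally, I would apply this master identity to $w = m(\sigma,x_i,\rho)$ and to $w = m(\sigma,x_1,\rho)$, then substitute into the two-term expansion of $\#_{\tx_j}(m(\sigma,\tx_i,\rho))$ derived in the first paragraph. Expanding and distributing produces exactly four summands, which match termwise with the right-hand side of the lemma. The step I expect to be the main sticking point is the expansion of $x_2$ in terms of the $\tx_k$; everything else is routine bookkeeping over $\F_2$, but that step requires the non-formal fact $\#_{x_2}(m(x_1)) = 1$, i.e.\ the uniqueness of the innermost bigon $B$, which is also what guarantees that $\{\tx_1,\ldots,\tx_n\}$ is a basis in the first place.
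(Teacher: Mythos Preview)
Your proof is correct and follows essentially the same approach as the paper. Both arguments establish the identity $\#_{y_j}(\tg_1(v)) = \#_{\tx_j}(v)$, the master identity $\#_{\tx_j}(w) = \#_{x_j}(w) + \#_{x_2}(w)\,\#_{x_j}(m(x_1))$ for $j\geq 3$, and the expansion $\tx_i = x_i + \#_{x_2}(m(x_i))\,x_1$; the only cosmetic difference is that the paper applies the master identity to $m(\sigma,\tx_i,\rho)$ first and then expands $\tx_i$, whereas you expand $\tx_i$ first and apply the master identity to each of the two resulting terms.
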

\begin{proof}
From the definitions of $\tg_1$ and the basis $\{\tx_k\}$ (and in particular the fact that, for $j \leq 3$, $\tx_j$ appears only in the expansion of $x_j$ and 
possibly in the expansion of $x_2$ if $x_1 \rightarrow x_j$), we have
\begin{align*}
\#_{y_j}(\tg_1(m(\sigma,\tx_i,\rho))) &= \#_{\tx_j}(m(\sigma,\tx_1,\rho))\\
&= \#_{x_j}(m(\sigma,\tx_i,\rho)) \;+ \#_{x_j}(m(x_1))\#_{x_2}(m(\sigma,\tx_i,\rho)), 
\end{align*}
as well as 
\begin{align*}
\tx_i = x_i + \#_{x_2}(m(x_i))x_1,
\end{align*}
and these combine to give the desired result.
\end{proof}
With a little work, we can now simplify (\ref{initialmbarexpansion}) using the result from Lemma \ref{countingequation}, as well as the facts
$\tx_1 = x_1$ and $\#_{\tx_2} = \#_{x_2}$, to obtain
\begin{align}\label{improvedmbarexpansion}
&\ovl{m}(\sigma_1,...,\sigma_m,y_i,\rho_1,...,\rho_n) = \sum_{j=3}^n y_j \;\#_{x_j}(m(\sigma_1,...,\sigma_m,x_i,\rho_1,...,\rho_n))\\
&+\sum_{j=3}^n y_j \sum \#_{x_2}(m(\sigma_{s_0+1},...,\sigma_{s_1},x_i,\rho_{t_0+1},...,\rho_{t_1}))
\#_{x_2}(m(\sigma_{s_1+1},...,\sigma_{s_2},x_1,\rho_{t_1+1},...,\rho_{t_2}))...\nn\\
&\#_{x_2}(m(\sigma_{s_{r-2}+1},...,\sigma_{s_{r-1}},x_1,\rho_{t_{r-2}+1},...,\rho_{t_{r-1}}))
\#_{x_j}(m(\sigma_{s_{r-1}+1},...,\sigma_{s_r},x_1,\rho_{t_{r-1}+1},...,\rho_{t_r}))\nn,
\end{align}

where the last sum is over all $r \geq 2$ and all $0 = s_0 \leq s_1 \leq ... \leq s_r = m,\; 0 = t_0 \leq t_1 \leq ... \leq t_r = n$
such that $(s_k,t_k) \neq (s_{k-1},t_{k-1})$ for $2 \leq k \leq r-1$ (note the difference in indexing from (\ref{initialmbarexpansion})).
But Lemma \ref{relatepolygons} shows that this last sum also computes $m$
(note that the condition $(s_k,t_k) \neq (s_{k-1},t_{k-1})$ for $2 \leq k \leq r-1$ corresponds to the stipulation in the definition of 
$S_r^{(\sigma_1,...,\sigma_m),(\rho_1,...,\rho_n)}(x_i,x_j)$ that the polygons be distinct from $B$), and this completes the proof.
\end{proof}

\section{The Main Result}\label{The Main Result}
In Section \ref{Various types of bimodules} we define type DD bimodules and DA bimodules, and in particular we define the type DD bimodule $\dd$, which plays a key role.
We also define the box product $\boxtimes$, a kind of tensor product which can be performed on the various types of bimodules.
In Section \ref{The gluing construction} we show how to obtain a representation of $\calD(\psi_0 \circ \phi_0)$ by gluing together $\calD(\phi_0),\calD(\mathbb{I})$, and $\calD(\psi_0)$.
In Section \ref{Checking the composition behavior} we prove the crucial ingredient for our mapping class group action, which is that
$M(\psi \circ \phi)$ and $M(\phi) \boxtimes (\dd \boxtimes M(\psi))$ are $\calA_{\infty}$-homotopy equivalent.
Finally, in Section \ref{Completing the proof} we complete the proof that we have a faithful action.

\subsection{Various types of bimodules}\label{Various types of bimodules}

We have already defined type AA bimodules, which are just $\calA_{\infty}$-bimodules. We now define type DD and type DA bimodules.
\begin{definition}
Let $\calA = (A,\bdy_A)$ and $\calB = (B,\bdy_B)$ be differential algebras over $\kk$ (i.e. $\bdy_A$ and $\bdy_B$ are $\kk$-linear and satisfy the Leibniz rule)
such that $\bdy_A\circ \bdy_A = 0$ and $\bdy_B \circ \bdy_B = 0$. 
A {\em type DD} bimodule $^{\calA}M^{\calB}$ over $\calA$ and $\calB$ is a $\kk$-bimodule $M$ and a map $\delta^1_{DD}: M \rightarrow A \otimes M \otimes B$ such that the following compatibility condition holds:
\begin{align*}
\xymatrix@=1.25em
{
&\ar@{-->}[d]&&&&\ar@{-->}[d]&&&&\ar@{-->}[d]&&\\
  &\delta^1_{DD}\ar@/_/[ddl]\ar@{-->}[d]\ar@/^/[ddr]&&+&&\delta^1_{DD}\ar@{-->}[dd]\ar@/_/[dl]\ar@/^/[ddr]&&+&&\delta^1_{DD}\ar@{-->}[dd]\ar@/_/[ddl]\ar@/^/[dr]&&=&0,\\
  &\delta^1_{DD}\ar@/_/[dl]\ar@/^/[dr]\ar@{-->}[dd]&&&\bdy_A\ar[d]&&&&&&\bdy_B\ar[d]&\\
  \mu_2\ar[d]&&\mu_2\ar[d]&&&&&&&&&\\
&&&&&&&&&&&\\
} 
\end{align*}
where $\mu_2$ denotes algebra multiplication.

For later purposes we also define an auxilary map $\delta_{DD}: M \rightarrow T^*(A) \otimes M \otimes T^*(B)$ by
\begin{align*}
\xymatrix@=1em
{
&\ar@{-->}[d]&&&\ar@{-->}[ddd]&&&\ar@{-->}[d]&&&&\ar@{-->}[d]&&&&\\
&\delta_{DD}\ar@/_/@{=>}[ddl]\ar@{-->}[dd]\ar@/^/@{=>}[ddr]&&:=&&+&&\delta^1_{DD}\ar@/_/[ddl]\ar@{-->}[dd]\ar@/^/[ddr]&&+&&\delta^1_{DD}\ar@/_/[ddl]\ar@{-->}[d]\ar@/^/[ddr]&&+&...&\\
&&&&&&&&&&&\delta^1_{DD}\ar@/_/[dl]\ar@{-->}[d]\ar@/^/[dr]&&&&\\
&&&&&&&&&&&&&&&\\
} 
\end{align*}

\end{definition}
Let $\delta^0_{DD} = \mathbb{I}$, let $\delta^2_{DD} := (\mathbb{I}\otimes\delta^1_{DD}\otimes\mathbb{I}) \circ \delta^1_{DD}$,
and more generally define $\delta^n_{DD}$ inductively by $\delta^n_{DD} := (\mathbb{I}\otimes\delta^{n-1}_{DD}\otimes\mathbb{I}) \circ \delta^1_{DD}$.
The above equation can then also be written as
\begin{align*}
\delta_{DD} = \sum_{i\geq 0}\delta_{DD}^i
\end{align*}

Type DA bimodules, which we define presently, are a sort of hybrid of type DD and type AA bimodules.
\begin{definition}
Let $\calA$ and $\calB$ be $\calA_{\infty}$-algebras over $\kk$. A {\em type DA} bimodule $^{\calA}M_{\calB}$ over $\calA$ and $\calB$ consists of a $\kk$-bimodule $M$
and $\kk$-linear maps 
\begin{align*}
\delta^{1,1+j}_{DA}: M \otimes B^{\otimes j} \rightarrow A \otimes M 
\end{align*}
satisfying a compatibility condition given as follows. 
Let $\delta^1_{DA}: M \otimes T^*(B) \rightarrow A\otimes M$ be the total map given by $\delta^1_{DA} = \sum_j \delta^{1,j}_{DA}$.
Define a map $\delta_{DA}: M \otimes T^*(B) \rightarrow T^*(A)\otimes M$ by
\begin{align*}
\xymatrix@C=.9em@R=2.2em
{
&&&&&&&&&&&\ar@{-->}[dd]&\ar@{=>}[d]&&&\ar@{-->}[dd]&\ar@{=>}[d]&\\  
&\ar@{-->}[d]&\ar@/^/@{=>}[dl]&&\ar@{-->}[dd]&&&\ar@{-->}[d]&\ar@/^/@{=>}[dl]&&&&\Delta_2\ar@/^/@{=>}[dl]\ar@/^/@{=>}[ddl]&&&&\Delta_3\ar@/^/@{=>}[dl]\ar@/^/@{=>}[ddl]\ar@/^/@{=>}[dddl]&\\
&\delta_{DA}\ar@/_/@{=>}[dl]\ar@{-->}[d]&&:=&&\oplus&&\delta^1_{DA}\ar@/_/[dl]\ar@{-->}[d]&&\oplus&&\delta^1_{DA}\ar@/_/[ddl]\ar@{-->}[d]&&\oplus&&\delta^1_{DA}\ar@/_/[dddl]\ar@{-->}[d]&&\oplus\; ...\\
&&&&&&&&&&&\delta^1_{DA}\ar@/_/[dl]\ar@{-->}[d]&&&&\delta^1_{DA}\ar@/_/[ddl]\ar@{-->}[d]&&\\
&&&&&&&&&&&&&&&\delta^1_{DA}\ar@/_/[dl]\ar@{-->}[d]&&\\
&&&&&&&&&&&&&&&&&\\
} 
\end{align*}
Here $\Delta_3 := (\Delta_2 \otimes \mathbb{I}) \circ \Delta_2$ (see Section \ref{algebras and bimodules}),
and in general $\Delta_n$ is defined inductively by $\Delta_n := (\Delta_{n-1} \otimes \mathbb{I}) \circ \Delta_2$.

The compatibility condition is then given by
\begin{align*}
\xymatrix
{
&\ar@{-->}[dd]&\ar@{=>}[d]&&&\ar@{-->}[d]&\ar@/^/@{=>}[dl]&&\\
&&\ovl{D}^{\calB}\ar@/^/@{=>}[dl]&&&\delta_{DA}\ar@/_/@{=>}[dl]\ar@{-->}[dd]&&\\
&\delta_{DA}\ar@/_/@{=>}[dl]\ar@{-->}[d]&&+&\ovl{D}^{\calA}\ar@{=>}[d]&&&=\;0.\\
&&&&&&&
} 
\end{align*}
\end{definition}

\begin{remark}
One can also define type AD bimodules, which are essentially the mirror images of type DA bimodules. Since we will not need these here, we omit them. 
\end{remark}

With these definitions at hand we now define the type DD bimodule $\dd$.
Call a chord in $\calB(\calZ)$ {\em short} if it connects adjacent points, and let $SC(\calZ)$ denote the set of short chords in $\calB(\calZ)$.
For a short chord $\xi$ in $\calB(\calZ)$, there is a corresponding short chord $\xi'$ in $\calB(\calZ')$ defined such that
$\xi$ and $\xi'$ lie on the boundary of the same connected component of $\calD(\mathbb{I})$.
\begin{definition}
For each generator $x_i \in M(\mathbb{I})$, let $I(x_i)$ denote the corresponding idempotent in $\calB(\calZ)$ and let $J(x_i)$ denote the corresponding
idempotent in $\calB(\calZ')$. 
Let
\begin{align*}
\dd := \bigoplus_i \calB(\calZ)I(x_i)\otimes J(x_i)\calB(\calZ'). 
\end{align*}
We will abuse notation and let $x_i$ denote a generator of the summand corresponding to $x_i$. We define a differential by
\begin{align*}
\bdy(x_i) = \sum_j\sum_{\xi\in SC(\calZ)}I(x_i)\cdot \xi\cdot x_j\cdot \xi'\cdot J(x_i), 
\end{align*}
and we extend this to all of $\dd$ by the Leibniz rule.
\end{definition}

Next we define $\boxtimes$ for bimodules. Specifically,
\begin{itemize}
 \item $DD \boxtimes AA$ is a type $DA$ bimodule.
 \item $AA \boxtimes DA$ is a type $AA$ bimodule.
\end{itemize}

\begin{remark}
We can of course define $\boxtimes$ for other combinations of bimodules as well, although we will not need them here. 
\end{remark}

\begin{definition}
Let $\vphantom{M}^{\calA}M^{\calB}$ be a $DD$ bimodule and let $\vphantom{N}_{\calB}N_{\calC}$ be an $AA$ bimodule. 
Then $(\vphantom{M}^{\calA}M^{\calB})\boxtimes(\vphantom{N}_{\calB}N_{\calC})$ as a $\kk$-bimodule
is $M \otimes_\kk N$,
with type DA structure map $\delta^1_{DA}$ given by 
\end{definition}
\begin{align*}
\xymatrix
{
&\ar@{-->}[d]&\ar@{-->}[dd]&\ar@/^/@{=>}[ddl]\\
&\delta_{DD}\ar@/_/@{=>}[dl]\ar@/^/@{=>}[dr]\ar@{-->}[dd]&&\\
\Pi\ar[d]&&m\ar@{-->}[d]&\\
&&&,\\
}
\end{align*}
where $\Pi$ denotes the algebra multiplication in $\calA$.

\begin{definition}
Let $\vphantom{M}_{\calA}M_{\calB}$ be an $AA$ bimodule and let $\vphantom{N}^{\calB}N_{\calC}$ be a $DA$ bimodule. 
Then $(\vphantom{M}_{\calA}M_{\calB}) \boxtimes (\vphantom{N}^{\calB}N_{\calC})$ as a $\kk$-bimodule
is $M \otimes_\kk N$,
with type AA structure map given by 
\end{definition}
\begin{align*}
\xymatrix
{
\ar@/_/@{=>}[ddr]&\ar@{-->}[dd]&\ar@{-->}[d]&\ar@/^/@{=>}[dl]\\
&&\delta_{DA}\ar@/^/@{=>}[dl]\ar@{-->}[dd]&\\
&m\ar@{-->}[d]&&\\
&&&.\\
} 
\end{align*}

\begin{remark}
The product $\boxtimes$ enjoys a number a desirable properties for tensor products. The reader is encouraged to consult Section 2.3 of [\ref{Bimodules}] for a detailed treatment.
We also mention here that $\boxtimes$ is not strictly associative, which is why we must choose a parenthesization in Theorem \ref{bigtheorem}.
\end{remark}

\subsection{The gluing construction for $\calD(\psi_0\circ\phi_0)$}\label{The gluing construction}
Since our goal is to relate $M(\psi \circ \phi)$ to $M(\phi)$ and $M(\psi)$, we will need a way to relate polygons in $\calD(\psi_0\circ\phi_0)$ to polygons
in $\calD(\phi_0)$ and $\calD(\psi_0)$. We will think of $\calD(\phi_0)$ (or $\calD(\mathbb{I})$ or $\calD(\psi_0)$)
as a space with distinguished $\alpha$ and $\beta$ curves on it, so that for example cutting or gluing $\calD(\phi_0)$ means cutting or gluing the $\alpha$ and $\beta$ arcs as well.

Recall that for a fixed arc diagram $\calZ$ on a surface $F$, $F^{\circ}(\calZ)$ is a surface 
with the same genus as $F$ and twice as many boundary components.
We can view $\bdy(F^{\circ}(\calZ))$ as the union of $\bdy F(\calZ)$ and $\bdy F(\calZ')$, and we call these the $\rho$ and $\sigma$ boundaries respectively.
Let $\calD(\phi_0) \cup_{\sigma} -\calD(\mathbb{I}) \cup_{\rho} \calD(\psi_0)$ be obtained from 
the disjoint union $\calD(\phi_0)\amalg -\calD(\mathbb{I})\amalg \calD(\psi_0)$
by identifying the $\rho$ boundary of $\calD(\phi_0)$ with the $\rho$ boundary of $-\calD(\mathbb{I})$ and by identifying the $\sigma$ boundary of 
$-\calD(\mathbb{I})$ with the $\sigma$ boundary of $\calD(\psi_0)$ (here $-\calD(\mathbb{I})$ denotes the orientation reversal of $\calD(\mathbb{I})$).
For convenience, we will assume throughout that there are no bigons in $\calD(\phi_0)$ or $\calD(\psi_0)$ (by Theorem \ref{isotopyinvariance} there is no loss of generality).

We will obtain a diagram $\tilde{\calD}(\psi_0 \circ \phi_0)$ by destabilizing $\calD(\phi_0) \cup_{\sigma} -\calD(\mathbb{I}) \cup_{\rho} \calD(\psi_0)$, a process which we now explain.
Notice that in $\calD(\phi_0) \cup_{\sigma} -\calD(\mathbb{I}) \cup_{\rho} \calD(\psi_0)$, the $\alpha$ arcs of $\calD(\phi_0)$ are glued to the $\alpha$ arcs of $-\calD(\mathbb{I})$ to form
{\em $\alpha$ circles}, and similarly the $\beta$ arcs of $-\calD(\mathbb{I})$ are glued to the $\beta$ arcs of $\calD(\psi_0)$ to form {\em$\beta$ circles}.
Each generator $w_j \in \dd$ corresponds to a unique intersection point of an $\alpha$ circle and a $\beta$ cirle in $\calD(\phi_0) \cup_{\sigma} -\calD(\mathbb{I}) \cup_{\rho} \calD(\psi_0)$.
Let $T_{w_j}$ denote a tubular neighborhood of this pair of intersecting $\alpha$ and $\beta$ circles in $\calD(\phi_0) \cup_{\sigma} -\calD(\mathbb{I}) \cup_{\rho} \calD(\psi_0)$,
where $T_{w_j}$ is sufficiently small that $\bdy T_{w_j} \cap \bdy (\calD(\phi_0) \cup_{\sigma} -\calD(\mathbb{I}) \cup_{\rho} \calD(\psi_0)) = \nil$
and such that further shrinking $T_{w_j}$ does not reduce the number of intersection points of $\bdy T_{w_j}$ with the 
$\alpha$ and $\beta$ arcs of $\calD(\phi_0),-\calD(\mathbb{I})$, and $\calD(\psi_0)$.
Notice that $T_{w_j}$ is a torus with one boundary component.

Let $\{x_i\},\{w_j\}$, and $\{y_k\}$ denote the canonical bases of $M(\phi_0),\dd$, and $M(\psi_0)$ respectively.
Let $\TT := \cup_j T_{w_j}$, and let us further suppose that the $T_{w_j}$'s are chosen to be pairwise disjoint.
\begin{construction}\label{Gluing Construction}
The construction of $\tilde{\calD}(\psi_0\circ\phi_0)$ involves 
removing each $T_{w_j}$ from $\calD(\phi_0) \cup_{\sigma} -\calD(\mathbb{I}) \cup_{\rho} \calD(\psi_0)$ and gluing in a corresponding disk $D_{w_j}$ along $\bdy T_{w_j}$.
That is, consider the space
\begin{align*}
\left((\calD(\phi_0) \cup_{\sigma} -\calD(\mathbb{I}) \cup_{\rho} \calD(\psi_0)) \setminus 
\TT\right) \cup_{\sim} \left(\amalg_j D_{w_j}\right),
\end{align*}
where $\sim$ identifies each $\bdy T_{w_j}$ diffeomorphically with $\bdy D_{w_j}$.
Let $\mathbf{D}$ denote $\cup_j D_{w_j}$.
Observe that each $x_i$ lies on a $\beta$ arc segment (i.e. connected subset) $t_{x_i} \subset T_{w_j}$, for some $T_{w_j}$,
 with $\bdy t_{x_i} \subset \bdy T_{w_j}$.
We additionally replace each $t_{x_i}$ with a $\beta$ arc segment $d_{x_i}$ in $D_{w_j}$, such that $\bdy t_{x_i}$ and $\bdy d_{x_i}$ are identified under $\sim$.
Similarly, each $y_k$  lies on an $\alpha$ arc segment $t_{y_k} \subset T_{w_j}$, for some $T_{w_j}$, with $\bdy t_{y_k} \subset \bdy T_{w_j}$.
We replace each $t_{y_k}$ with an $\alpha$ arc segment $d_{y_k}$ in $D_{w_j}$, such that $\bdy t_{y_k}$ and $\bdy d_{y_k}$ are identified under $\sim$.
The $d_{x_i}$'s and $d_{y_k}$'s are chosen such that they intersect each other minimally (we can picture them as chords in the $D_{w_j}$'s). 
Let $\tilde{\calD}(\psi_0\circ\phi_0)$ denote the resulting space with $\alpha$ and $\beta$ curves.
\end{construction}
\begin{example}\label{GluingConstructionExample}
See Figure \ref{GluingConstructionExampleFig} for the construction of $\tilde{\calD}(\psi_0\circ\phi_0)$ when $\phi_0 = \psi_0$ is a Dehn twist on the once punctured torus.
\end{example}

\begin{figure}
\begin{center}
 \includegraphics[scale=.6]{./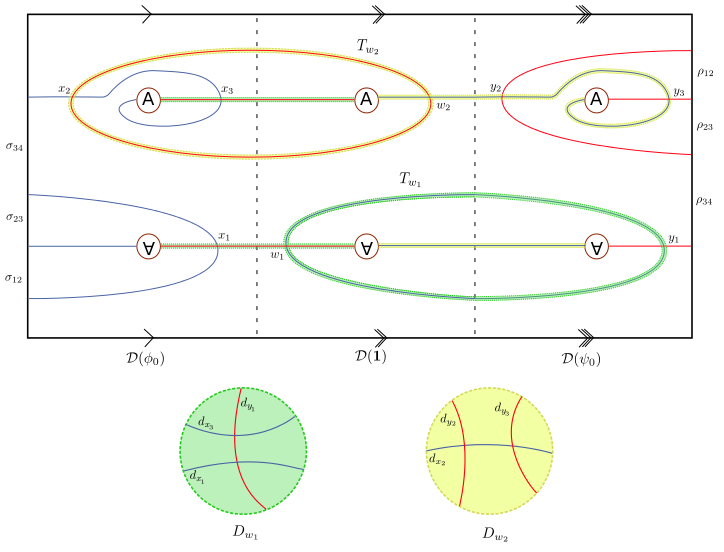}
\end{center}
\caption{The construction of $\tilde{\calD}(\psi_0\circ\phi_0)$ when $\phi_0 = \psi_0$ is a Dehn twist on the once punctured torus. Note that
here the orientations of $\calD(\phi_0)$ and $\calD(\psi_0)$ are the opposite of the standard orientation of the plane.}
\label{GluingConstructionExampleFig}
\end{figure}

\begin{lemma}\label{arc segments are disjoint}
The $\beta$ arc segments $\{d_{x_i}\}$ (resp. $\alpha$ arc segments $\{d_{y_k}\}$) in Construction \ref{Gluing Construction} are pairwise disjoint.
Two arcs in $\{d_{x_i}\}$ and $\{d_{y_k}\}$ respectively intersect if they lie in the same disk of $\{D_{w_j}\}$.
\end{lemma}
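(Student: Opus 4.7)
My plan is to prove the lemma by a local analysis of each tubular neighborhood $T_{w_j}$ together with its boundary circle $\bdy T_{w_j} = \bdy D_{w_j}$. First I would verify that $T_{w_j}$ is partitioned by the $\rho$- and $\sigma$-boundaries into three sub-regions: a rectangular neighborhood $R_\phi := T_{w_j} \cap \calD(\phi_0)$ of the $\alpha$-arc of $\calD(\phi_0)$, a rectangular neighborhood $R_\psi := T_{w_j} \cap \calD(\psi_0)$ of the $\beta$-arc of $\calD(\psi_0)$, and a cross-shaped region $R_I := T_{w_j} \cap (-\calD(\mathbb{I}))$ containing $w_j$ together with the arcs of both circles that lie in $-\calD(\mathbb{I})$. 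Since $T_{w_j}$ is a once-punctured torus whose boundary curve is freely homotopic to the commutator of the $\alpha$- and $\beta$-circles, $\bdy T_{w_j}$ passes through four ``sides'' parallel respectively to $\alpha, \beta, \alpha^{-1}, \beta^{-1}$; each $\alpha$-side is split by one $\rho$-crossing into a $\Phi$-sub-arc (in $\calD(\phi_0)$) and an $I$-sub-arc (in $-\calD(\mathbb{I})$), while each $\beta$-side is split by one $\sigma$-crossing into a $\Psi$-sub-arc and an $I$-sub-arc. This produces the cyclic order $\Phi_1, I_1, \Psi_1, I_2, \Phi_2, I_3, \Psi_2, I_4$ around $\bdy D_{w_j}$.

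Next I would use this structure to deduce the pairwise disjointness of $\{d_{x_i}\}$. Each $t_{x_i}$ is a component of a $\beta$-arc of $\calD(\phi_0)$ intersected with $T_{w_j}$ which crosses the $\alpha$-arc transversally at $x_i$, so it is an embedded arc in the rectangle $R_\phi$ with one endpoint on $\Phi_1$ and one on $\Phi_2$. The collection $\{t_{x_i}\}$ is a pairwise disjoint family of such rectangle-crossing arcs, and hence after identifying $\Phi_1$ and $\Phi_2$ with the top and bottom of $R_\phi$ the induced endpoint pairing is a non-crossing matching. Therefore the $d_{x_i}$'s can be chosen inside $D_{w_j}$ as the corresponding pairwise disjoint chords. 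The same argument with $R_\psi, \Psi_1, \Psi_2$ in place of $R_\phi, \Phi_1, \Phi_2$ handles the $d_{y_k}$'s.

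For the intersection statement, each $d_{x_i}$ joins $\Phi_1$ to $\Phi_2$ and any $d_{y_k}$ lying in the same disk joins $\Psi_1$ to $\Psi_2$. Because the cyclic order on $\bdy D_{w_j}$ is $\Phi_1, \Psi_1, \Phi_2, \Psi_2$, the four endpoints of any such pair interleave, so by a Jordan curve argument any chord from $\Phi_1$ to $\Phi_2$ in $D_{w_j}$ must meet any chord from $\Psi_1$ to $\Psi_2$; the minimality convention from Construction \ref{Gluing Construction} then reduces this to a single transverse intersection. Arcs lying in different disks are trivially disjoint because the $T_{w_j}$'s, and hence the $D_{w_j}$'s, are pairwise disjoint by construction.

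I expect the main technical difficulty to lie in pinning down the cyclic order of regions on $\bdy T_{w_j}$: one must check that the $\rho$- and $\sigma$-crossings sit in the combinatorially expected positions so that the $\Phi$- and $\Psi$-sub-arcs genuinely interleave rather than sitting on the same ``side'' of the commutator boundary. I would carry this out in an explicit local model, viewing $T_{w_j}$ as the standard once-punctured torus obtained by thickening two transverse generating circles in $\R^2/\Z^2$, and then tracking how the $\rho$-boundary (transverse to $\alpha$) and the $\sigma$-boundary (transverse to $\beta$) chop the four arms of the resulting cross-shape into the labeled pieces $\Phi, I, \Psi, I$; once this is in place the remaining steps are essentially planar topology.
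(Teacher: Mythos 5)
The paper gives no real proof of this lemma: it simply asserts that the claim ``follows more or less directly from the construction.'' Your proposal therefore fills an actual gap rather than paralleling an existing argument, and the route you take is the right one. Modeling $T_{w_j}$ as a standard once-punctured torus, identifying $R_\phi = T_{w_j}\cap\calD(\phi_0)$ and $R_\psi = T_{w_j}\cap\calD(\psi_0)$ as rectangular collar pieces of the $\alpha$- and $\beta$-circles respectively, and then reading the cyclic order of the resulting sub-arcs around $\bdy T_{w_j}\cong\bdy D_{w_j}$ is exactly what is needed: the parallel rectangle-crossing arcs $t_{x_i}$ give a nested (non-crossing) endpoint matching on the circle, which yields the disjointness of $\{d_{x_i}\}$ (and similarly $\{d_{y_k}\}$), while the fact that the $\Phi$-sub-arcs and $\Psi$-sub-arcs interleave forces every $d_{x_i}$ to cross every $d_{y_k}$ in the same disk by the Jordan curve theorem, reduced to a single transverse point by the minimality convention. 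Arcs in different disks are disjoint since the $T_{w_j}$'s are.

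There is one small inaccuracy in your local picture which you should correct, although it does not affect the conclusion. The $\alpha$-circle meets the gluing circle $\rho$ transversely in the \emph{two} endpoints of the $\alpha$-arc of $\calD(\phi_0)$, so the $\rho$-locus inside $T_{w_j}$ consists of two arcs running across the $\alpha$-collar; each long side of that collar is therefore split by \emph{two} $\rho$-crossings into three pieces in the pattern $I,\Phi,I$ (with $\Phi$ straddling the wrap-around), not by one crossing into $I,\Phi$. The same correction applies to the $\sigma$-boundary and the $\beta$-collar. After merging the $I$-sub-arcs that meet across the four corners of the central square, the cyclic pattern on $\bdy D_{w_j}$ is still $\Phi_1,I,\Psi,I,\Phi_2,I,\Psi,I$, so the interleaving you rely on holds and the rest of the argument goes through unchanged.
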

\begin{proof}
This follows more or less directly from the construction.
\end{proof}

\begin{lemma}
For some $\xi_0$ representing an element $\xi \in MCG_0(F(\calZ))$, there is a diffeomorphism
$\tilde{\calD}(\psi_0\circ\phi_0) \cong \calD(\xi_0)$ sending $\alpha$ curves to $\alpha$ curves and $\beta$ curves to $\beta$ curves.
\end{lemma}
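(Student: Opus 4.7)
The plan is to verify the lemma in three stages: first show that $\tilde{\calD}(\psi_0\circ\phi_0)$ is diffeomorphic to $F^\circ(\calZ)$ as a surface with boundary; second identify the $\alpha$-curves of $\tilde{\calD}(\psi_0\circ\phi_0)$ with the standard $\alpha$-curves of $F^\circ(\calZ)$; and third produce $\xi_0$ by showing the resulting $\beta$-curves form a cutting system with the same boundary data as $\eta_1,\ldots,\eta_{2(g+b-1)}$, then invoking the change of coordinates principle for arcs on surfaces.

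For the topology, Euler characteristic is additive under gluing along $1$-manifolds, so
\begin{align*}
\chi\bigl(\calD(\phi_0)\cup_\sigma -\calD(\mathbb{I})\cup_\rho \calD(\psi_0)\bigr) = 3\chi(F^\circ(\calZ)) = 3(2-2g-2b).
\end{align*}
Each $T_{w_j}$ is a once-punctured torus (a regular neighborhood of two transversely intersecting essential circles in an oriented surface), so $\chi(T_{w_j}) = -1$; replacing it by the disk $D_{w_j}$ changes $\chi$ by $+2$. Since the $w_j$'s index the generators of $\dd$, which are in bijection with the $2(g+b-1)$ intersections of $\alpha$- and $\eta$-arcs in $\calD(\mathbb{I})$ and hence with the intersections of $\alpha$- and $\beta$-circles in the glued space, the destabilizations contribute $+4(g+b-1)$, yielding $\chi(\tilde{\calD}(\psi_0\circ\phi_0)) = 2-2g-2b = \chi(F^\circ(\calZ))$. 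Destabilization leaves the outer boundary unchanged, so $\bdy\tilde{\calD}(\psi_0\circ\phi_0)$ consists of the $\rho$-boundary of $\calD(\phi_0)$ and the $\sigma$-boundary of $\calD(\psi_0)$, with $2b$ components, matching $F^\circ(\calZ)$. By the classification of connected oriented surfaces with boundary, $\tilde{\calD}(\psi_0\circ\phi_0)$ and $F^\circ(\calZ)$ are diffeomorphic.

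Next, the $\alpha$-arcs of $\tilde{\calD}(\psi_0\circ\phi_0)$ are obtained by concatenating the $\alpha$-arcs of $\calD(\phi_0)$, $-\calD(\mathbb{I})$, and $\calD(\psi_0)$ along the $\alpha$-circles, with each $t_{y_k}$ replaced by $d_{y_k}$ (the $d_{y_k}$'s being pairwise disjoint by Lemma \ref{arc segments are disjoint}). These form $2(g+b-1)$ disjoint properly embedded arcs with endpoints on the $\rho$-boundary, exactly matching the boundary combinatorics prescribed by $\calZ$. A direct count of complementary regions (using that each destabilization disk $D_{w_j}$ fuses certain regions of $\calD(\phi_0)\setminus\boldsymbol{\alpha}$, $-\calD(\mathbb{I})\setminus\boldsymbol{\alpha}$, and $\calD(\psi_0)\setminus\boldsymbol{\alpha}$ together) shows that the complement of this $\alpha$-system is a disjoint union of disks, so it is a valid cutting system for $\calZ$. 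The change of coordinates principle then supplies a diffeomorphism $\tilde{\calD}(\psi_0\circ\phi_0)\to F^\circ(\calZ)$ fixing the boundary and carrying these arcs to the standard $\alpha$-arcs. Under this diffeomorphism, the $\beta$-arcs of $\tilde{\calD}(\psi_0\circ\phi_0)$ become $2(g+b-1)$ disjoint properly embedded arcs in $F^\circ(\calZ)$ whose endpoints on the $\sigma$-boundary agree with those of $\eta_1,\ldots,\eta_{2(g+b-1)}$ and whose complement (by the analogous count) is again a disjoint union of disks. A second application of the change of coordinates principle produces the desired $\xi_0$ fixing $\bdy F^\circ(\calZ)$ pointwise with $\xi_0(\eta_i)$ equal to the $i$-th $\beta$-arc.

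The main obstacle will be verifying that the $\alpha$- and $\beta$-complements are genuinely disjoint unions of disks rather than higher-genus pieces. The Euler characteristic computation rules out global anomalies but does not by itself imply connectivity of each complementary region. This requires tracking, region by region, how components of $\calD(\phi_0)\setminus\boldsymbol{\alpha}$, $-\calD(\mathbb{I})\setminus\boldsymbol{\alpha}$, and $\calD(\psi_0)\setminus\boldsymbol{\alpha}$ fuse through the $D_{w_j}$'s, and similarly for the $\beta$-side; the key input is that in $\calD(\mathbb{I})$ each $\alpha_i$ meets exactly one $\eta_i$ in a single point, which makes the fusion pattern at each $D_{w_j}$ especially simple.
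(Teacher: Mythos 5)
Your proposal follows essentially the same outline as the paper: an Euler-characteristic computation to pin down the topology of $\tilde{\calD}(\psi_0\circ\phi_0)$, followed by an analysis of the $\alpha$- and $\beta$-arc systems to produce the diffeomorphism and the mapping class $\xi_0$. The $\chi$-computation you give is correct and agrees with the paper's, which phrases the same count in terms of genus (each destabilization lowers genus by one, and there are $2(g+b-1)$ of them).

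There are, however, two concrete problems. First, a factual slip: the complement of $\boldsymbol{\alpha}$ in $F^\circ(\calZ)$ is a disjoint union of $b$ \emph{annuli}, not disks. The ``union of disks'' condition in the definition of an arc diagram refers to $F(\calZ)\setminus\boldsymbol{\alpha}$; passing from $F(\calZ)$ back to $F^\circ(\calZ)$ reglues each of these $b$ disks to itself along $z_i\times[0,1]$, producing an annulus. Since $\tilde{\calD}(\psi_0\circ\phi_0)$ is modeled on $F^\circ(\calZ)$ and carries no preferred basepoint arcs, the correct target for the complement analysis is annuli, as the paper states explicitly. Second, and more seriously, the step you yourself flag as ``the main obstacle'' --- verifying the topology of the $\alpha$- and $\beta$-complements --- is not a detail but is the real content of the lemma, and your proposal leaves it unproved. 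The paper's device for closing this gap is to compute $\chi(\tilde{\calD}(\psi_0\circ\phi_0)\setminus\boldsymbol{\alpha})$ by the same cut-and-destabilize accounting you applied to $\tilde{\calD}(\psi_0\circ\phi_0)$ itself, and to compare it with $\chi(\calD(\mathbb{I})\setminus\boldsymbol{\alpha})=0$; together with the observation that each complementary component is planar (assembled by gluing disks along boundary arcs) and a count of components, this pins down the homeomorphism type. Your proposed ``region-by-region fusion'' tracking would also work and is in the same spirit, but as written it is a strategy sketch rather than an argument. Until that verification is supplied --- for the $\alpha$-complement to set up the change-of-coordinates diffeomorphism, and again for the $\beta$-arcs to extract $\xi_0\in\MCG_0(F(\calZ))$ fixing the boundary pointwise --- the proof is incomplete.
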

\begin{proof}
We first compute the genus of $\tilde{\calD}(\psi_0\circ\phi_0)$ as follows.
Let $\chi(\cdot)$ denote Euler characteristic and let $g'$ be the genus of $\calD(\phi_0) \cup_{\sigma} -\calD(\mathbb{I}) \cup_{\rho} \calD(\psi_0)$.
Then we have 
\begin{align*}
2 - 2g' - 2b = \chi(\calD(\phi_0) \cup_{\sigma} -\calD(\mathbb{I}) \cup_{\rho} \calD(\psi_0)) = 3\chi(\calD(\mathbb{I})) = 3(2-2g-2b),
\end{align*}
and therefore $g' = 3g +2b - 2$.
Since each destabilization lowers the genus by one, and we perform one for each of the $2(g+b-1)$ $\alpha$ arcs of $\calD(\mathbb{I})$, we
conclude that the genus of $\tilde{\calD}(\psi_0\circ\phi_0)$ is $g$.

Next, observe that $\calD(\mathbb{I}) \setminus \boldsymbol{\alpha}$ and $\tilde{\calD}(\psi_0\circ\phi_0) \setminus \boldsymbol{\alpha}$ have the same
Euler characteristic. Then since $\calD(\mathbb{I}) \setminus \bold{\alpha}$ is a collection of $b$ annuli,
so is $\tilde{\calD}(\psi_0\circ\phi_0) \setminus \boldsymbol{\alpha}$, and we can find a diffeomorphism
$\tilde{\calD}(\psi_0\circ\phi_0) \setminus \boldsymbol{\alpha} \cong \calD(\mathbb{I}) \setminus \boldsymbol{\alpha}$ respecting $\alpha$ arcs and boundary segments.
This quotients to a diffeomorphism $\tilde{\calD}(\psi_0\circ\phi_0) \cong \calD(\mathbb{I})$ respecting $\alpha$ arcs and boundaries,
and the same reasoning shows that the resulting the image of the $\beta$ arcs differ from those of $\calD(\mathbb{I})$ by
some $\xi_0$.
\end{proof}

\begin{lemma}\label{uniqueness of representation}	 
The mapping class $\xi$ above is equal to $\psi \circ \phi$. 
\end{lemma}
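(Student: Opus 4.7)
The plan is to identify $\xi$ by determining how the $\beta$ curves of $\tilde{\calD}(\psi_0\circ\phi_0)$ sit inside $F^\circ(\calZ)$ under the diffeomorphism from the preceding lemma: by definition, $\xi$ is the unique element of $\MCG_0(F(\calZ))$ such that $\xi(\boldsymbol{\eta})$ is isotopic rel boundary to the image of these $\beta$ curves. So it suffices to trace the $\beta$ curves explicitly and verify that they are isotopic to $(\psi_0\circ\phi_0)(\boldsymbol{\eta})$.

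Each $\beta$ curve of $\tilde{\calD}(\psi_0\circ\phi_0)$ has its endpoints on the $\sigma$-boundary inherited from $\calD(\phi_0)$ and is assembled from two types of pieces: exterior segments of $\phi_0(\boldsymbol{\eta})$ arcs in $\calD(\phi_0)$ (outside the tubular neighborhoods $T_{w_j}$) and the interior replacement arcs $d_{x_i}$ in the disks $D_{w_j}$. The endpoints of each $d_{x_i}$ on $\bdy D_{w_j}$ are exactly the points at which the corresponding $\phi_0(\eta)$ segments exited $T_{w_j}$, and they are matched with each other according to the structure of the $\beta$ circles inside $T_{w_j}$, each of which is built from an $\eta$ arc of $-\calD(\mathbb{I})$ glued to a $\psi_0(\eta)$ arc of $\calD(\psi_0)$ along the $\sigma$-gluing.

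With this picture in hand, I would first dispatch the case $\psi_0 = \mathbb{I}$, in which the $\beta$ circles inside $T_{w_j}$ are just doubled $\eta$ arcs, so the endpoint-matching recovers the arrangement of $\phi_0(\boldsymbol{\eta})$ in $\calD(\phi_0)$ directly and $\xi = \phi$. By a symmetric argument (reversing orientation and interchanging the roles of the two outer pieces), one gets $\xi = \psi$ when $\phi_0 = \mathbb{I}$. For general $\psi_0$ and $\phi_0$, the key step is to observe that the gluing along $\sigma$ carries each $\phi_0(\eta)$ endpoint first along an $\eta$ arc of $-\calD(\mathbb{I})$ and then along a $\psi_0(\eta)$ arc of $\calD(\psi_0)$; up to isotopy this transport is the same as applying $\psi_0$ to $\phi_0(\boldsymbol{\eta})$, since the $\eta$ arcs in $-\calD(\mathbb{I})$ encode the identity while the $\psi_0(\eta)$ arcs in $\calD(\psi_0)$ encode the action of $\psi_0$.

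The main obstacle is the combinatorial bookkeeping: one has to verify that the global concatenation of exterior $\phi_0(\eta)$ pieces with the interior $d_{x_i}$ arcs, across many destabilization disks at once, assembles into embedded curves isotopic to $\psi_0\circ\phi_0(\boldsymbol{\eta})$. Thanks to Theorem \ref{isotopyinvariance}, we may assume $\phi_0$ and $\psi_0$ are in any convenient normal form (for instance, supported in a small disk away from the boundary and meeting the $\alpha$-arcs transversally and economically), which reduces the verification to a finite local check inside each $D_{w_j}$ — matching the two descriptions of the endpoint pairing — together with a global isotopy argument in the complement of the destabilization disks.
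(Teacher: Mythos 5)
The paper proves this lemma by appealing to bordered Heegaard Floer theory: each $\calD(\cdot)$ is an $\alpha$-$\beta$-bordered Heegaard diagram representing a mapping cylinder as a $3$-manifold, gluing along the boundary corresponds to stacking mapping cylinders, and the Heegaard moves (destabilizations) performed in Construction \ref{Gluing Construction} are known to preserve the represented $3$-manifold; the reference is Section 3 of [\ref{LOT10c}]. Your proposal aims instead for a purely $2$-dimensional argument, tracking the image of the $\beta$-arcs under the diffeomorphism from the preceding lemma and showing directly that they are isotopic to $(\psi_0\circ\phi_0)(\boldsymbol{\eta})$. That is a genuinely different route, and in the spirit of this paper's goal of giving combinatorial proofs it would be the more desirable one to have.

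However, there is a concrete gap at the step you call the ``transport.'' You write that ``the gluing along $\sigma$ carries each $\phi_0(\eta)$ endpoint first along an $\eta$ arc of $-\calD(\mathbb{I})$ and then along a $\psi_0(\eta)$ arc of $\calD(\psi_0)$,'' but this does not match the gluing. The $\beta$ arcs of $\calD(\phi_0)$ (i.e.\ the $\phi_0(\boldsymbol{\eta})$ arcs) have their endpoints on the $\sigma$ boundary of $\calD(\phi_0)$, which remains a free boundary component of the glued surface; nothing is concatenated onto them. It is the $\alpha$ arcs of $\calD(\phi_0)$ and the $\beta$ arcs of $-\calD(\mathbb{I})$ that get glued (to the $\alpha$ arcs of $-\calD(\mathbb{I})$ and the $\beta$ arcs of $\calD(\psi_0)$, respectively), and those become the $\alpha$ and $\beta$ circles that are then destabilized away. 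So there is no curve that travels through a $\phi_0(\eta)$ arc, then an $\eta$ arc of $-\calD(\mathbb{I})$, then a $\psi_0(\eta)$ arc; that picture simply does not occur in the diagram. The way $\psi_0$ actually enters is more global and is exactly what your argument does not engage with: the diffeomorphism $\tilde{\calD}(\psi_0\circ\phi_0)\cong \calD(\mathbb{I})$ supplied by the preceding lemma is required to take the $\alpha$ arcs of $\tilde{\calD}(\psi_0\circ\phi_0)$ --- which live in the $\calD(\psi_0)$ piece --- to the standard $\alpha$ arcs, and it is the global twisting needed to accomplish this that pushes $\psi_0$ into the image of the $\beta$ arcs. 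Your reduction to the special cases $\psi_0=\mathbb{I}$ and $\phi_0=\mathbb{I}$ does not address this either, since the general case is not literally a composite of the two special cases; the claim that the destabilization ``encodes $\psi_0$'' must be justified in the general case directly. A correct combinatorial proof along your lines would need to explicitly construct the $\alpha$-preserving diffeomorphism piece by piece (for instance by collapsing $-\calD(\mathbb{I})\cup_{\sigma}\calD(\psi_0)$ onto $\calD(\phi_0)$ using a $\psi_0$-twisted product structure) and track the $\beta$ arcs through it; as written the proposal describes the assembly of the $\beta$ curves correctly but does not identify where the $\psi_0$-dependence comes from.
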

Let $M(\tilde{\calD}(\psi_0\circ\phi_0))$ denote the $\calB(\calZ')-\calB(\calZ)$ $\calA_{\infty}$-bimodule defined by $\tilde{D}(\psi_0\circ\phi_0)$.
\begin{corollary}\label{uniquess of representation corollary}
$M(\tilde{\calD}(\psi_0\circ\phi_0))$ is $\calA_{\infty}$-homotopy equivalent to $M(\psi_0\circ\phi_0)$.
\end{corollary}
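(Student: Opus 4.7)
The plan is to combine Lemma \ref{uniqueness of representation} with the isotopy invariance theorem (Theorem \ref{isotopyinvariance}). First, I would observe that the bimodule $M(\cdot)$ is defined purely in terms of counts of polygons with boundary on the $\alpha$ and $\beta$ curves, so it is invariant under any diffeomorphism of the ambient surface that carries $\alpha$ curves to $\alpha$ curves and $\beta$ curves to $\beta$ curves. By the preceding lemma, such a diffeomorphism exists between $\tilde{\calD}(\psi_0 \circ \phi_0)$ and $\calD(\xi_0)$ for some $\xi_0$ representing $\psi \circ \phi \in \MCG_0(F)$. Consequently $M(\tilde{\calD}(\psi_0 \circ \phi_0))$ and $M(\xi_0)$ are isomorphic (in particular $\calA_\infty$-homotopy equivalent) as $\calB(\calZ')$-$\calB(\calZ)$ $\calA_\infty$-bimodules.

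Next, by Lemma \ref{uniqueness of representation}, both $\xi_0$ and $\psi_0 \circ \phi_0$ are representatives of the same element $\psi \circ \phi$ of $\MCG_0(F(\calZ))$. Applying Theorem \ref{isotopyinvariance} to this pair of representatives yields that $M(\xi_0)$ and $M(\psi_0 \circ \phi_0)$ are $\calA_\infty$-homotopy equivalent. Composing the two equivalences through $M(\xi_0)$ — and using that $\calA_\infty$-homotopy equivalence is transitive, which follows from the existence of $\calA_\infty$-homotopy inverses together with the composition of morphisms defined in (\ref{A infinity homomorphism composition}) — we conclude that $M(\tilde{\calD}(\psi_0 \circ \phi_0))$ is $\calA_\infty$-homotopy equivalent to $M(\psi_0 \circ \phi_0)$.

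I expect this corollary to be essentially immediate once Lemma \ref{uniqueness of representation} is established, so there is no real obstacle beyond noting that $M$ is manifestly invariant under $\alpha,\beta$-preserving diffeomorphisms (since the polygon count $n(x,y,(\sigma_1,\dots,\sigma_m),(\rho_1,\dots,\rho_n))$ is defined solely in terms of the combinatorics of such polygons in the diagram). The only subtlety worth flagging is that in invoking Theorem \ref{isotopyinvariance} one should verify that the diffeomorphism produced by the previous lemma respects the boundary parametrization (and in particular the basepoints $z_i$ and the labelling of $\alpha$ arcs), so that the two representatives are compared as elements of $\MCG_0(F)$ relative to $\calZ$ rather than as abstract diffeomorphisms; this is built into the construction of $\tilde{\calD}(\psi_0\circ\phi_0)$ since the identification of the outer boundary with $\bdy F(\calZ)$ is preserved throughout the destabilization process.
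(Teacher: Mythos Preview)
Your argument is correct and is exactly what the paper intends: the corollary is stated without proof immediately after Lemma \ref{uniqueness of representation}, and your derivation from the diffeomorphism $\tilde{\calD}(\psi_0\circ\phi_0)\cong\calD(\xi_0)$, Lemma \ref{uniqueness of representation}, and Theorem \ref{isotopyinvariance} is precisely the implicit reasoning. The additional care you take in noting that $M(\cdot)$ is manifestly invariant under $\alpha,\beta$-preserving diffeomorphisms and that the boundary parametrization is respected is appropriate and fills in the only details the paper leaves to the reader.
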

\begin{proof}[Sketch of proof of Lemma \ref{uniqueness of representation}]
Lemma \ref{uniqueness of representation} can be understood in terms of $\alpha-\beta$-bordered Heegaard diagrams.
$\calD(\phi_0)$ and $\calD(\psi_0)$ can be understood as representing mapping cylinders for $\phi$ and $\psi$ respectively,
and gluing $\calD(\phi_0)$ to $\calD(\psi_0)$ gives a representation of the mapping cylinder for $\psi\circ\phi$.
Moreover, it follows from the general theory of bordered Heegaard diagrams that performing Heegaard moves (isotopies, handleslides and (de)stabilizations) leave the 
represented 3-manifold invariant, which is why our construction of $\tilde{\calD}(\psi\circ\phi)$ by gluing, destabilizing, and replacing arc segments is successful.
The reader is encouraged to consult Section 3 of [\ref{LOT10c}] for more details.
\end{proof}

\subsection{Checking the composition behavior}\label{Checking the composition behavior}
The following definition gives the natural notion of isomorphism in the category of $\calA_{\infty}$-bimodules.
\begin{definition}\label{A infinity isomorphism}
An $\calA_{\infty}$-homomorphism $F = \{F_{i,1,j}\}$ from $\vphantom{M}_\calA M_{\calB}$ to $\vphantom{N}_\calA N_{\calB}$
 is an {\em $\calA_{\infty}$-isomorphism} if there exists an $\calA_{\infty}$-homomorphism $G = \{G_{i,1,j}\}$ from 
$\vphantom{N}_\calA N_{\calB}$ to $\vphantom{M}_\calA M_{\calB}$ 
such that $G \circ F = \mathbb{I_{M}}$ and $F \circ G = \mathbb{I_{N}}$. 
Note that in particular $F$ is an $\calA_{\infty}$-homotopy equivalence.
\end{definition}

\begin{theorem}\label{bigtheorem}
There is an $\calA_{\infty}$-isomorphism $F: M(\tilde{\calD}(\psi_0\circ\phi_0)) \rightarrow M(\phi_0) \boxtimes \left(\dd \boxtimes M(\psi_0)\right)$
with $F_{i,1,j} = 0$ unless $i = j = 0$.
\end{theorem}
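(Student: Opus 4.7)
The plan is to construct $F_{0,1,0}$ as an explicit bijection on generators and then verify the $\calA_\infty$ homomorphism condition via a polygon-decomposition argument, after which invertibility will follow formally.

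First I would define $F_{0,1,0}$ on generators. By Lemma \ref{arc segments are disjoint}, every intersection point of $\boldsymbol{\alpha}$ and $\boldsymbol{\beta}$ in $\tilde{\calD}(\psi_0\circ\phi_0)$ lies inside some disk $D_{w_j}$ and is the transverse intersection of a unique $d_{x_i}$ with a unique $d_{y_k}$. This identifies such a generator with the triple $x_i \otimes w_j \otimes y_k$ in $M(\phi_0) \boxtimes (\dd \boxtimes M(\psi_0))$, where compatibility of idempotents is automatic from the construction of $\TT$ and $\mathbf{D}$. Set $F_{0,1,0}$ equal to this map on generators, extend $\kk$-linearly, and set all higher $F_{i,1,j}$ to zero. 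The map is a bijection on generators essentially by inspection of the construction.

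The main work is showing $F$ is an $\calA_\infty$-homomorphism. Unwinding the relation $\bdy(F)=0$ with only $F_{0,1,0}$ nonzero, this reduces to matching, for each pair of generators and each pair of chord sequences $(\sigma_1,\ldots,\sigma_m)$ and $(\rho_1,\ldots,\rho_n)$, the polygon count defining $m_{m,1,n}$ on $M(\tilde{\calD}(\psi_0\circ\phi_0))$ with the combined structure map on $M(\phi_0) \boxtimes (\dd \boxtimes M(\psi_0))$. Expanding the right side, using the definitions of $\boxtimes$ from Section \ref{Various types of bimodules} and the fact that $\delta_{DD}$ on $\dd$ only introduces short chord pairs $(\xi,\xi')$, the target count is a sum over decompositions into (i) a polygon in $\calD(\phi_0)$ from some $x_i$ to some $x_{i'}$ through a subsequence of the $\rho$'s together with some short chords $\xi_1,\ldots,\xi_p$, (ii) a sequence of short chord applications in $\dd$ matching the $(\xi_k,\xi_k')$ pairs, and (iii) a polygon in $\calD(\psi_0)$ from some $y_{k'}$ to some $y_{k}$ through a subsequence of the $\sigma$'s together with $\xi_1',\ldots,\xi_p'$.

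To match this, I would prove a decomposition lemma in the spirit of Lemma \ref{relatepolygons}: given any polygon $P \colon \D^2 \to \tilde{\calD}(\psi_0\circ\phi_0)$ through $(\sigma_1,\ldots,\sigma_m)$ and $(\rho_1,\ldots,\rho_n)$, cut $\Dom(P)$ along the preimages of $\bdy \mathbf{D}$. Away from $\mathbf{D}$ the pieces are polygons in $\calD(\phi_0)$ or $\calD(\psi_0)$, while each piece inside a disk $D_{w_j}$ is, by the minimality clause of Construction \ref{Gluing Construction}, a small region whose left and right sides each consist of a single arc segment and which therefore records a single short chord $\xi$ on one side and its partner $\xi'$ on the other. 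The boundary chord sequences $(\sigma_i)$ and $(\rho_j)$ of $P$ distribute uniquely across the $\calD(\phi_0)$ and $\calD(\psi_0)$ pieces, while the small disk pieces supply exactly the short chord pairs in a $\dd$ contribution. Conversely, given matching data on the three factors, the small disk pieces can be glued to the $\calD(\phi_0)$ and $\calD(\psi_0)$ polygons along $\bdy \mathbf{D}$ to produce a polygon in $\tilde{\calD}(\psi_0\circ\phi_0)$, and one checks these operations are mutually inverse. The main obstacle is the bookkeeping in this decomposition: verifying that the cuts along $\bdy \mathbf{D}$ never accidentally split a $\calD(\phi_0)$ or $\calD(\psi_0)$ piece into more components than a single polygon, and that the intermediate generators produced at the cut are exactly the $w_j$'s appearing in $\delta_{DD}$.

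Once this decomposition lemma is in place, the $\calA_\infty$ relation for $F$ is immediate from the bijection. Finally, an inverse $\calA_\infty$-homomorphism $G$ with $G_{0,1,0} = F_{0,1,0}^{-1}$ and higher terms constructed inductively from the vanishing of $\bdy(G)$ gives the $\calA_\infty$-isomorphism required by Definition \ref{A infinity isomorphism}.
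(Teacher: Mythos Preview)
Your overall strategy matches the paper's: define $F_{0,1,0}$ as the natural bijection on generators, set all higher $F_{i,1,j}$ to zero, and reduce the $\calA_\infty$-homomorphism condition to a bijection between polygons in $\tilde{\calD}(\psi_0\circ\phi_0)$ and structure-map contributions on the box product. The paper's proof is organized exactly this way, split into \textbf{Direction 1} (from box-product data to a polygon $P_\tot$) and \textbf{Direction 2} (decomposing a polygon $P_\tot$).

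However, your description of the decomposition contains a genuine gap. Recall that $\tilde{\calD}(\psi_0\circ\phi_0)$ is built from \emph{three} pieces, $\calD(\phi_0)\cup_\sigma -\calD(\mathbb{I})\cup_\rho \calD(\psi_0)$, with the tori $\TT$ replaced by the disks $\DD$. When you cut $\Dom(P)$ along $P^{-1}(\bdy\DD)$, the complement lands in $(\calD(\phi_0)\cup_\sigma -\calD(\mathbb{I})\cup_\rho \calD(\psi_0))\setminus\TT$, not just in $\calD(\phi_0)\amalg\calD(\psi_0)$. The short chord pairs $(\xi,\xi')$ are carried by hexagonal regions $P_M^{i,j}$ of $-\calD(\mathbb{I})$, not by the disk pieces; the regions inside $\DD$ are instead the triangles, bigons, and rectangles $R_\init,R_\term,\ovl{R}_l,\unl{R}_l,\ova{R}_l,\una{R}_l$ that get swapped for corresponding regions of $\TT$ when you rebuild the honest polygons $P_L$ and $P_R^i$. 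So a second cutting along the $\tilde{\rho}$ and $\tilde{\sigma}$ curves, followed by regluing pieces of the tori, is needed before you obtain polygons in $\calD(\phi_0)$ and $\calD(\psi_0)$. The paper also needs a nontrivial counting argument to rule out entire disk preimages $R_l$ in the interior of $\Dom(P_\tot)$.

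Two further points. First, the parenthesization $M(\phi_0)\boxtimes(\dd\boxtimes M(\psi_0))$ forces an asymmetry you do not mention: on the $\phi_0$ side there is a \emph{single} polygon $P_L$ through all of $(\sigma_1,\ldots,\sigma_m)$ and through \emph{products} $\Pi(\xi^{i,1},\ldots,\xi^{i,s_i})$ of short chords, while on the $\psi_0$ side there are \emph{several} polygons $P_R^1,\ldots,P_R^r$, each through a contiguous block of the $\rho$'s and through individual short chords; your description has the $\rho$'s and $\sigma$'s swapped and treats the two sides symmetrically. Second, no inductive construction of higher $G_{i,1,j}$ is needed: once $F_{0,1,0}$ strictly intertwines the structure maps, $G$ with $G_{0,1,0}=F_{0,1,0}^{-1}$ and all higher terms zero is already an $\calA_\infty$-homomorphism, and $F\circ G$, $G\circ F$ are the identities on the nose.
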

\begin{corollary}\label{bigcorollary}
The bimodules $M(\psi\circ\phi)$ and $M(\phi) \boxtimes \left(\dd \boxtimes M(\psi)\right)$ are $\calA_{\infty}$-homotopy equivalent.
\end{corollary}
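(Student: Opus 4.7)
The plan is to assemble the corollary by chaining together the two main results that immediately precede it, together with the isotopy-invariance established in Section \ref{Homotopy invariance of M under isotopies}. Concretely, Theorem \ref{bigtheorem} furnishes an $\calA_{\infty}$-isomorphism
\begin{align*}
F: M(\tilde{\calD}(\psi_0\circ\phi_0)) \rightarrow M(\phi_0) \boxtimes \left(\dd \boxtimes M(\psi_0)\right),
\end{align*}
and by Definition \ref{A infinity isomorphism} any such isomorphism is in particular an $\calA_{\infty}$-homotopy equivalence. Corollary \ref{uniquess of representation corollary} says that $M(\tilde{\calD}(\psi_0\circ\phi_0))$ is $\calA_{\infty}$-homotopy equivalent to $M(\psi_0\circ\phi_0)$.

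The first step I would take is to argue that $\calA_{\infty}$-homotopy equivalence is transitive: given homotopy inverses for two composable homotopy equivalences, their compositions (as defined by \eqref{A infinity homomorphism composition}) furnish homotopy inverses for the composite. This is a routine diagrammatic check using the chain-complex structure $(\Mor,\bdy)$ from \eqref{A infinity homomorphism relations}. Chaining Corollary \ref{uniquess of representation corollary} with Theorem \ref{bigtheorem} then gives an $\calA_{\infty}$-homotopy equivalence
\begin{align*}
M(\psi_0\circ\phi_0) \simeq M(\phi_0) \boxtimes \left(\dd \boxtimes M(\psi_0)\right).
\end{align*}

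Finally, I would invoke Theorem \ref{isotopyinvariance} to pass from specific representatives $\phi_0,\psi_0$ to the mapping classes $\phi,\psi$. By that theorem, the $\calA_{\infty}$-homotopy type of $M(\phi_0)$ depends only on $\phi\in\MCG_0(F)$ (and similarly for $\psi$ and $\psi\circ\phi$), which is exactly why the notation $M(\phi) := M(\phi_0)$ was adopted after its proof. Substituting yields the desired equivalence
\begin{align*}
M(\psi\circ\phi) \simeq M(\phi) \boxtimes \left(\dd \boxtimes M(\psi)\right).
\end{align*}

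The only nontrivial obstacle is verifying that the relevant compatibilities — transitivity of $\calA_{\infty}$-homotopy equivalence, and the fact that tensoring on the right with $\dd\boxtimes M(\psi)$ preserves homotopy equivalence if we need it — behave well with the $\boxtimes$ product. Since the homotopy equivalence is handed to us directly on the combined object $M(\phi_0) \boxtimes (\dd \boxtimes M(\psi_0))$, we do not need to propagate a homotopy equivalence through $\boxtimes$, and the argument reduces to the transitivity check and the substitution of representatives. Thus the corollary follows from Theorem \ref{bigtheorem}, Corollary \ref{uniquess of representation corollary}, and Theorem \ref{isotopyinvariance} with essentially no additional work.
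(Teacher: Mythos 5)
Your proposal is correct and follows essentially the same route as the paper: chaining Theorem \ref{bigtheorem}, Corollary \ref{uniquess of representation corollary}, and Theorem \ref{isotopyinvariance} via transitivity of $\calA_{\infty}$-homotopy equivalence. The paper's proof is terser (it cites only Theorem \ref{isotopyinvariance} and Corollary \ref{uniquess of representation corollary}, leaving Theorem \ref{bigtheorem} and the transitivity step implicit), so if anything you have spelled out the logic more completely than the original.
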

\begin{proof}
This is immediate from Therem \ref{isotopyinvariance} and Corollary \ref{uniquess of representation corollary}.
\end{proof}

\begin{proof}[Proof of Theorem \ref{bigtheorem}]
The morphism 
\begin{align*}
F: M(\tilde{\calD}(\psi_0\circ\phi_0)) \rightarrow M(\phi_0) \boxtimes \left(\dd \boxtimes M(\psi_0)\right)
\end{align*}
has $F_{i,1,j} := 0$ unless $i=j=0$, with $F_{0,1,0}$ defined in the following way.
Recall that $M(\tilde{\calD}(\psi_0\circ\phi_0))$ is generated by the intersection points between $\alpha$ and $\beta$ arcs in
$\tilde{\calD}(\psi_0\circ\phi_0)$. Let us denote this set by $\{z_l\}$.
Each $z_l$ occurs in some $D_{w_j}$ as $d_{x_i}\cap d_{y_k}$ for some $d_{x_i}$ and $d_{y_k}$.
In this case, following the definitions, we have $x_i \otimes (w_j \otimes y_k) \in \GEN$, 
where $\GEN$ is the canonical $\F_2$-basis for $M(\phi_0) \boxtimes \left(\dd \boxtimes M(\psi_0)\right)$.
We define $F_{0,1,0}$ by 
\begin{align*}
F_{0,1,0}(z_l) := x_i \otimes (w_j \otimes y_k). 
\end{align*}

We claim that, for each $x_i \otimes (w_j\otimes y_k) \in \GEN$, the arc segments $d_{x_i}$ and $d_{y_k}$ intersect at some point $z_l$ in some $D_{w_j}$.
It then follows that $F_{0,1,0}$ sets up a bijective correspondence between $\{z_l\}$ and $\GEN$,
and we extend it by linearity to a $\kk$-bimodule isomorphism.

We let 
\begin{align*}
G: M(\phi_0) \boxtimes \left(\dd \boxtimes M(\psi_0)\right) \rightarrow M(\tilde{\calD}(\psi_0\circ\phi_0))
\end{align*}
be the morphism defined by $G_{0,1,0} := F_{0,1,0}^{-1}$ and $G_{i,1,j} := 0$ otherwise.
Our claim is that $F$ and $G$ are inverse $\calA_{\infty}$-isomorphisms.
Using the definition of morphism composition (see (\ref{A infinity homomorphism composition})), we see that $F$ and $G$ satisfy the conditions of Definition \ref{A infinity isomorphism} if
$F$ and $G$ are $\calA_{\infty}$-homomorphisms. 
This is in turn equivalent to the following reduction of (\ref{A infinity homomorphism relations}):
\begin{align*}
\xymatrix
{
\ar@/_/@{=>}[ddr]&\ar@{-->}[d]&\ar@/^/@{=>}[ddl]&&\ar@/_/@{=>}[dr]&\ar@{-->}[d]&\ar@/^/@{=>}[dl]&\\
&F_{0,1,0}\ar@{-->}[d]&&+&&m\ar@{-->}[d]&&=\;0.\\
&m\ar@{-->}[d]&&&&F_{0,1,0}\ar@{-->}[d]&&\\
&&&&&&&\\
} 
\end{align*}
That is, we must show that $F_{0,1,0}$ sets up bijective correspondence between the $m$ relations of either bimodule. 
We verify the above equation by breaking it into two steps. $\textbf{Direction 1}$ shows that for each nontrivial
summand of $m$ in $M(\phi_0) \boxtimes \left(\dd \boxtimes M(\psi_0)\right)$ there is the corresponding summand of $m$ in $M(\tilde{\calD}(\psi_0\circ\phi_0))$,
and $\textbf{Direction 2}$ shows that for each nontrivial summand of $m$ in $M(\tilde{\calD}(\psi_0\circ\phi_0))$
there is the corresponding summand of $m$ in $M(\phi_0) \boxtimes \left(\dd \boxtimes M(\psi_0)\right)$.

\sss
  
\noindent $\textbf{Direction 1}$:

Let us consider $m(\sigma_m,...,\sigma_1,x_a\otimes(w_b \otimes y_c),\rho_1,...,\rho_n)$
for $x_a \otimes (w_b \otimes y_c) \in \GEN$.
To see what a summand means in terms of $M(\phi_0),\dd$, and $M(\psi_0)$, 
observe that the structure map $\delta_{DA}$ on $\dd \boxtimes M(\psi_0)$ is given by
\begin{align*}
\xymatrix@=.9em
{
&\ar@{--}[dd]&\ar@/^/@{=>}[ddl]    &&    \ar@{--}[dddd]&\ar@{--}[dddd]    &&    &\ar@{--}[d]&\ar@{--}[dd]&\ar@/^/@{=>}[ddl]  &&  &&\ar@{--}[d]&\ar@{--}[dd]&\ar@{=>}[d]	&\\
&&    &&    &    &&    &\delta_{DD}\ar@{--}[ddd]\ar@/_/@{=>}[dl]\ar@/^/@{=>}[dr]&&  &&  &&\delta_{DD}\ar@/_/@{=>}[dl]\ar@{--}[d]\ar@/^/@{=>}[dr]&&\Delta_2\ar@/^/@{=>}[dl]\ar@/^/@{=>}[ddl]	&\\
&\delta_{DA}\ar@/_/@{=>}[ddl]\ar@{--}[dd]&    &=&    &    &+&    \Pi\ar[dd]&&m\ar@{--}[dd]&  &+&  &\Pi\ar@/_/[ddl]&\delta_{DD}\ar@/_/@{=>}[dl]\ar@{--}[dd]\ar@/^/@{=>}[dr]&m\ar@{--}[d]&	& +\;...\\
&&    &&    &    &&    &&&  &&  &\Pi\ar@/_/[dl]&&m\ar@{--}[d]&	&\\
&&    &&    &    &&    &&&  &&  &&&&	&\\
}
\end{align*}

Thus each summand of $m(\sigma_1,...,\sigma_m,x_a\otimes(w_b \otimes y_c),\rho_1,...,\rho_n)$ comes from a diagram of the form
\begin{align}\label{case 3}
\xymatrix@C=.5em@R=1em
{
\sigma_m\ar@/_/[ddddddddrrr]&...\ar@/_/@{=>}[ddddddddrr]&\sigma_1\ar@/_/[ddddddddr]&x_a\ar@{-->}[dddddddd]    &&w_b\ar@{-->}[d]&y_c\ar@{-->}[dd]&		\rho_{t_0+1}\ar@/^/[ddl]&...\ar@/^/@{=>}[ddll]&\rho_{t_1}\ar@/^/[ddlll]&\rho_{t_1+1}\ar@/^/[dddllll]&...\ar@/^/@{=>}[dddlllll]&\rho_{t_2}\ar@/^/[dddllllll]&\ar@{.}[rr]&\ar@/^/@{=>}[dddddllllllll]&\rho_{t_{r-1}+1}\ar@/^/[dddddddlllllllll]&...\ar@/^/@{=>}[dddddddllllllllll]&\rho_{t_r}\ar@/^/[dddddddlllllllllll] \\
&&&    &&\delta_{DD}^{s_1}\ar@{-->}[d]\ar@/_/@{=>}[dl]\ar@/^/@{=>}[dr]&&		\\
&&&    &\Pi\ar@/_/[ddddddl]&\delta_{DD}^{s_2}\ar@{-->}[d]\ar@/_/@{=>}[dl]\ar@/^/@{=>}[dr]&m\ar@{-->}[d]&		\\
&&&    &\Pi\ar@/_/[dddddl]&\ar@{.}[dd]&m\ar@{-->}[d]&		\\
&&&    &\ar@{.}[dd]&\ar@/_/@{=>}[dl]\ar@/^/@{=>}[dr]&\ar@{.}[dd]&		\\
&&&    &\ar@{=>}@/_/[dddl]&\ar@{-->}[d]&&		\\
&&&    &&\delta_{DD}^{s_r}\ar@{-->}[ddd]\ar@/_/@{=>}[dl]\ar@/^/@{=>}[dr]&\ar@{-->}[d]&		\\
&&&    &\Pi\ar@/_/[dl]&&m\ar@{-->}[dd]&		\\
&&&m\ar@{-->}[d]    &&&&		\\
&&&&&&&
}
\end{align}
where $0 = t_0 \leq t_1 \leq ... \leq t_r = n$, $s_1,...,s_r \geq 0$, and $r \geq 0$.

In particular, the case $r = 0$ manifests itself as
\begin{align}\label{case 1}
\xymatrix@=3em
{
\sigma_m\ar@/_/[drrr]&...\ar@{=>}@/_/[drr]&\sigma_1\ar@/_/[dr]&x_{a}\ar@{-->}[d]&w_b\ar@{-->}[dd]&y_c\ar@{-->}[dd]\\
&&&m\ar@{-->}[d]&&\\
&&&&&
}
\end{align}
On the other hand, if $r >0$ and any of $s_1,...,s_r$ is zero, then we must have $r = 1$ and $s_1 = 0$, 
since otherwise the $m$ in the left of (\ref{case 1}) would be trivial. Then this case corresponds to a diagram of the form
\begin{align}\label{case 2}
\xymatrix@=2em
{
x_{a}\ar@{-->}[d]&\ar^{\mathbf{1_{\calB(\calZ)}}}@/^/[dl]&w_b\ar@{-->}[dd]&y_c\ar@{-->}[d] &\rho_1\ar@/^/[dl]&...\ar@{=>}@/^/[dll]&\rho_n\ar@/^/[dlll]\\
m\ar@{-->}[d]&&&m\ar@{-->}[d]\\
&&&
}
\end{align}

Accordingly, we break our work into three cases:
\begin{itemize}
 \item $\textbf{Case 1}$: $r = 0$
 \item $\textbf{Case 2}$: $r = 1$ and $s_1 = 0$
 \item $\textbf{Case 3}$: $r \geq 1$ and $s_1,...,s_r \geq 1$
\end{itemize}

\sss

For convenience let $\simt: \bdy\boldsymbol{D} \rightarrow \bdy\boldsymbol{T}$ denote the diffeomorphism
defining $\sim$, and let $\simd: \bdy\boldsymbol{T} \rightarrow \bdy\boldsymbol{D}$ denote its inverse.
In what follows $d(\cdot)$ and $t(\cdot)$ will always be corresponding segments as in Construction (\ref{Gluing Construction}),
and similarly for $\DD(\cdot)$ and $\TT(\cdot)$.
We will use $\Dom(\cdot)$ to denote the domain of a map.

\sss

\noindent $\textbf{Case 1}$:
For a summand $x_{a'} \otimes (w_{b'}\otimes y_{c'})$ of $m(\sigma_m,...,\sigma_1,x_a\otimes(w_b \otimes y_c),\rho_1,...,\rho_n)$ 
coming from $(\ref{case 1})$, it is clear that $w_{b'} = w_b,\;y_{c'} = y_c$, and $n = 0$.
The $m$ in $(\ref{case 1})$ implies the existence of a polygon
$P_L: \D^2 \rightarrow \calD(\phi_0)$ from $x_a$ to $x_{a'}$ through $(\sigma_1,...,\sigma_m)$ and $()$.
A little thought shows that the connected components of $P_L^{-1}(\boldsymbol{T})$ 
are neighborhoods $\{\ova{A}_l\}$ of $\alpha$ segments in $\Int(\D^2)$ with both endpoints on the left side of $\D^2$, as well as a neighborhood $A_\init$ of the right side of $\D^2$.
Let $\calP_L$ denote the restriction of $P_L$ to $P_L^{-1}(\calD(\phi_0)\setminus\boldsymbol{T})$.
Our plan is to upgrade $\calP_L$ to a polygon $P_{\text{tot}}: \D^2 \rightarrow \tilde{\calD}(\psi_0\circ\phi_0)$ from $F^{-1}_{0,1,0}(x_a \otimes (w_b \otimes y_c))$ 
to $F^{-1}_{0,1,0}(x_{a'} \otimes (w_b \otimes y_c))$ through $(\sigma_1,...,\sigma_m)$ and $()$, and we do this by gluing regions of disks to $\calP_L$.
%

Consider the $\beta$ segments $d_{x_a},d_{x_{a'}} \in D_{w_b}$ and the $\alpha$ segment $d_{y_c} \in D_{w_b}$.
By Lemma \ref{arc segments are disjoint} we know that $d_{x_a}$ and $d_{x_{a'}}$ are disjoint and both intersect $d_{y_c}$.
Observe that $\calP_L$ maps $\bdy \calP_L \cap \bdy A_\init$ diffeomorphically to a segment $s$ of $\bdy T_{w_b}$ connecting an endpoint of $t_{x_a}$ to an endpoint of $t_{x_{a'}}$,
and it corresponds under $\sim$ to a segment $\simd(s)$ of $\bdy D_{w_b}$ connecting an endpoint of $d_{x_a}$ to an endpoint of $d_{x_{a'}}$.
Together $d_{x_a},d_{x_{a'}},d_{y_c}$, and $\simd(s)$ define a rectangular region $R_\init$ of $D_{w_b}$.
We proceed by gluing $R_\init$ to $\calP_L$ along $R_\init \cap \bdy D_{w_b}$.

Similarly, the intersection of each $\ova{A}_l$ with the left side of $P_L$ is two nonintersecting $\beta$ segments $t^-(\ova{A}_l),t^+(\ova{A}_l) \in \{t_{x_i}\}$.
The corresponding $\beta$ segments $d^-(\ova{A}_l),d^+(\ova{A}_l) \in \{d_{x_i}\}$ define a rectangular region $\ova{R}_l$ of some disk $D(\ova{R}_l) \in \{D_{w_j}\}$.
We proceed by gluing $\ova{R}_l$ to $\calP_L$ along $\ova{R}_l \cap \bdy D(\ova{R}_l)$.

The result after these gluings is the desired polygon $P_\tot$.

\sss

\noindent $\textbf{Case 2}$:
This case follows similarly to Case 1, except with the roles of the left and right sides switched.

\sss

\noindent $\textbf{Case 3}$:
Finally, suppose we have a summand $x_{a'} \otimes (w_{b'}\otimes y_{c'})$ coming from $(\ref{case 3})$ with $r \geq 1$ and $s_1,...,s_r \geq 1$. 
Assume that for each $i$ the contributing output of $\delta^{s_i}_{DD}$ is 
$\xi^{i,1}\otimes ... \otimes \xi^{i,s_i}$ on the left and $\xi^{i,1'}\otimes ... \otimes \xi^{i,s_i'}$ on the right, 
where $\xi^{i,j}$ and $\xi^{i,j'}$ are corresponding short chords in the sense of Section \ref{Various types of bimodules}.
Then we have
\begin{itemize}
 \item a polygon $P_L: \D^2 \rightarrow \calD(\phi_0)$ from $x_a$ to $x_{a'}$ through $(\sigma_1,...,\sigma_m)$ and $(\Pi(\xi^{1,1},...,\xi^{1,s_1}),...,\Pi(\xi^{r,1},...,\xi^{r,s_r}))$,
 \item a compact connected component $P^{i,j}_M$ of $\calD(\mathbb{I})\setminus (\boldsymbol{\alpha}\cup\boldsymbol{\beta})$ 
\footnote{Here $P^{i,j}_M$ is really the closure of a component. We will often think of $P^{i,j}_M$ as the identity map on itself.}
containing $\xi^{i,j}$ and $\xi^{i,j'}$ for 
 each $1 \leq i \leq r$ and $1 \leq j \leq s_i$, and 
 \item a polygon $P^i_R: \D^2 \rightarrow \calD(\psi_0)$ through $(\xi^{i,1'},...,\xi^{i,s_i'})$ and $(\rho_{t_{i-1}+1},...,\rho_{t_i})$ for each $1 \leq i \leq r$.
\end{itemize}

We begin by analyzing the preimage of $\TT$ in each $P_R^i$ and $P_L$. 
Since the left side of $P_R^i$ only includes short chords, the components of $(P_R^i)^{-1}(\TT)$ are neighborhoods $\{\una{A}^i_l\}$ of 
$\beta$ segments in $\Int(P_R^i)$ with both endpoints on the right side of $P_R^i$ and neighborhoods of $\beta$ segments of the left side of $P_R^i$.
On the other hand, the components of $P_L^{-1}(\TT)$ are 
\begin{itemize}
 \item neighborhoods $\{\ova{A}_l\}$ of $\alpha$ segments in $\Int(P_L)$ with both endpoints on the left side of $P_L$,
  \item neighborhoods $\{\ovl{A}_l\}$ of $\alpha$ segments in $\Int(P_L)$ with one endpoint on the left side of $P_L$ and one endpoint on the right side of $P_L$, and
 \item neighborhoods of $\alpha$ segments of the right side of $P_L$.
\end{itemize}
\begin{remark}\label{exludebothsidedarcs}
Notice that we have excluded neighborhoods of $\alpha$ segments in $\Int(P_L)$ with endpoints on each side of $P_L$, since this would imply
a component of $F \setminus \boldsymbol{\alpha}$ whose boundary lies entirely in $\cup_i S_i^-$, contradicting 
the conditions given in Section \ref{arc diagrams and the algebra B(Z)}.
\end{remark}

Let $\calP_L,\calP_M^{i,j}$, and $\calP_R^i$ denote the restrictions of $P_L,P_M^{i,j}$, and $P_R^i$ to 
$P_L^{-1}(\calD(\phi_0)\setminus\TT),(P_M^{i,j})^{-1}(\calD(\mathbb{I})\setminus \TT)$, and $(P_R^i)^{-1}(\calD(\psi_0)\setminus \TT) $ respectively.
We proceed by gluing each $\calP^{i,j}_M$ along its $\xi^i_j$ side to $\calP_L$ and along its $\xi^{i'}_j$ side to $\calP^i_R$ in the obvious way implied by
$(\ref{case 3})$, the gluing taking place along (slight reductions of) short chords.
The result is a map $\calP_\tot$ into 
$(\calD(\phi_0) \cup_{\sigma} -\calD(\mathbb{I}) \cup_{\rho} \calD(\psi_0)) \setminus \DD$.
Our plan is to upgrade $\calP_\tot$ to a polygon $P_{\tot}: \D^2 \rightarrow \tilde{\calD}(\psi_0\circ\phi_0)$ 
from $F^{-1}_{0,1,0}(x_{a} \otimes (w_b \otimes y_c))$ to $F^{-1}_{0,1,0}(x_{a'} \otimes (w_{b'} \otimes y_{c'}))$
through $(\sigma_1,...,\sigma_m)$ and $(\rho_1,...,\rho_n)$
by gluing in various regions of the disks $\{D_{w_j}\}$.

Firstly, we have already seen in cases 1 and 2 how to glue in regions $\una{R}_l^i$ and $\ova{R}_l$ corresponding to each $\una{A}_l^i$ and $\ova{A}_l$ respectively.

Denote the resulting map after this gluing by $\calP_\tot'$.
Observe that, for each $l$, $\bdy \Dom(\calP_\tot') \setminus \bdy \ovl{A}_l$ has two components, one of which intersects the left side of $P_L$ and the right side of each $P_R^i$.
Let us denote the union of the other component and $\bdy\Dom(\calP_\tot') \cap \bdy \ovl{A}_l$ by $\BS(\ovl{A}_l)$.
For coherence we break our remaining gluing into four steps.
\begin{enumerate}

 \item The intersection of each $\bdy \ovl{A}_l$ with the left side of $P_L$ is a $\beta$ segment $t(\ovl{A}_l) \in \{t_{x_i}\}$,
and $\calP_\tot'$ maps $\BS(\ovl{A}_l)$ diffeomorphically onto a segment $\calP_\tot'(\BS(\ovl{A}_l)$) of some $\bdy T(\ovl{A}_l) \in \{\bdy T_{w_j}\}$
with endpoints $\bdy t(\ovl{A}_l)$.
Together $d(\ovl{A}_l)$ and $\simd(\calP_\tot'(\BS(\ovl{A}_l)))$ define a bigonal region $\ovl{R}_l$ of $D(\ovl{A}_l)$, which we glue along $\bdy \ovl{R}_l \cap \bdy D(\ovl{A}_l)$
to $\BS(\ovl{A}_l)$.
 
 \item Now let $\calP_\tot''$ denote the resulting map, and observe that $\Dom(\calP_\tot'')$ is simply-connected.
Let $\gamma_L(P_L)$ and $\gamma_R(P_R^i)$ denote the left and right sides of $P_L$ and $P_R^i$ respectively.
Let $C_\init,C_1,...,C_{r-1},C_\term$ denote the components of
\begin{align*}
\Dom(\calP_\tot'') \setminus \left(\gamma_L(P_L) \cup_{i} \gamma_R(P_R^i) \cup_{l} \bdy \una{R}_l \cup_l\bdy \ova{R}_l \cup_l\bdy \ovl{R}_l\right),
\end{align*}
ordered such that $C_\init$ intersects $\calP_R^1$, $C_i$ intersects $\calP_R^{i}$ and $\calP_R^{i+1}$ for $1 \leq i \leq r-1$, and $C_\term$
intersects $\calP_R^r$.
Then for $1 \leq i \leq r-1$, $\calP_\tot''$ maps $C_i$ diffeomorphically onto a segment $\calP_\tot''(C_i)$ of some $\bdy T(C_i) \in \{\bdy T_{w_j}\}$ with endpoints
$\bdy t(C_i) \in \{t_{y_k}\}$. Together $d(C_i)$ and $\simd(\calP_\tot''(C_i))$ define a bigonal region $\unl{R}_l$ of $D(C_i)$, which we glue
along $\bdy \unl{R}_l \cap \bdy D(C_i)$ to $C_i$.

\item Finally, we consider $C_\init$ and $C_\term$. $\calP_\tot''$ maps $C_\init$ diffeomorphically onto a segment $\calP_\tot''(C_\init)$ of $\bdy T_{w_b}$
with endpoints in $\bdy t_{x_a}$ and $\bdy t_{y_c}$ respectively.
Then $\simd(\calP_\tot''(C_\init))$ together with $d_{x_a}$ and $d_{y_c}$ define a triangular region $R_\init$ of $D_{w_b}$,
which we glue along $\bdy R_\init \cap \bdy D_{w_b}$ to $C_\init$.
The vertex $d_{x_a} \cap d_{y_c}$ is of course none other than $F^{-1}_{0,1,0}(x_a\otimes(w_b\otimes y_c))$.
We proceed similarly for $C_\term$, gluing a triangular region $R_\term$ of $D_{w_{b'}}$ along 
$\bdy R_\term \cap \bdy D_{w_{b'}}$ to $C_\term$.

\end{enumerate}
The result after all of this gluing is the promised polygon $P_{\text{tot}}$.

\sss

\sss

\noindent $\textbf{Direction 2}$:  

Now let $P_{\text{tot}}: \D^2 \rightarrow \tilde{\calD}(\psi_0\circ\phi_0)$ be a polygon 
from $F^{-1}_{0,1,0}(x_{a} \otimes (w_b \otimes y_c))$ to $F^{-1}_{0,1,0}(x_{a'} \otimes (w_{b'} \otimes y_{c'}))$
through $(\sigma_1,...,\sigma_m)$ and $(\rho_1,...,\rho_n)$. Our task is to reverse the process of \textbf{Direction 1} by 
breaking up $P_{\text{tot}}$ into smaller components which fit together as in (\ref{case 3}).
We first examine the components of $P_{\text{tot}}^{-1}(\boldsymbol{D})$.

\begin{lemma}\label{preimageofTinP}
The components of $P_{\text{tot}}^{-1}(\boldsymbol{D})$ consist of:
\begin{itemize}
 \item a domain $R_{\text{init}} \subset D_{w_b}$ containing $-i$, and possibly a distinct domain $R_{\text{term}} \subset D_{w_{b'}}$ containing $i$
  \item domains $R_l \cong D(R_l) \in \{D_{w_j}\}$ in the interior of $\Dom(P_{\text{tot}})$ 
 \item domains $\ovl{R}_l \subset D(\ovl{R}_l) \in \{D_{w_j}\}$ such that $\ovl{R}_l \cap \bdy(\Dom(P_{\text{tot}}))$ is a single segment of the {\em left} side of $\Dom(P_{\text{tot}})$
 \item domains $\unl{R}_l \subset D(\unl{R}_l) \in \{D_{w_j}\}$ such that $\unl{R}_l \cap \bdy(\Dom(P_{\text{tot}}))$ is a single segment of the {\em right} side of $\Dom(P_{\text{tot}})$
 \item domains $\ova{R}_l \subset D(\ova{R}_l) \in \{D_{w_j}\}$ such that $\ova{R}_l \cap \bdy(\Dom(P_{\text{tot}}))$ is two disjoint segments of the {\em left} side of $\Dom(P_{\text{tot}})$
 \item domains $\una{R}_l \subset D(\una{R}_l) \in \{D_{w_j}\}$ such that $\una{R}_l \cap \bdy(\Dom(P_{\text{tot}}))$ is two disjoint segments of the {\em right} side of $\Dom(P_{\text{tot}})$
\end{itemize}
where (by abuse of notation) we have used $\subset$ to denote an embedding under $P_{\text{tot}}$.
Moreover, we have one of the following:
\begin{itemize}
 \item \textbf{Case 1}: $R_{\text{init}} = R_{\text{term}}$ is $\Dom(P_{\text{tot}})$ minus a half disk with equator along
the {\em left} side of $\Dom(P_{\text{tot}})$,
 \item \textbf{Case 2}: $R_{\text{init}} = R_{\text{term}}$ is $\Dom(P_{\text{tot}})$ minus a half disk with equator along
the {\em right} side of $\Dom(P_{\text{tot}})$, or 
 \item \textbf{Case 3}: $R_{\text{init}}$ and $R_{\text{term}}$ are distinct, and $R_\init \cap \bdy(\Dom(P_\tot))$ and $R_\term \cap \bdy(\Dom(P_\tot))$ are both connected.
\end{itemize}
\end{lemma}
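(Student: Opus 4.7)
The plan is to analyze $P_\tot^{-1}(\DD)$ component by component, classifying each according to how it meets $\bdy(\Dom(P_\tot))$, and then to separately pin down the components containing $-i$ and $i$. Since the disks $\{D_{w_j}\}$ are pairwise disjoint by Construction \ref{Gluing Construction}, each component of $P_\tot^{-1}(\DD)$ lies in $P_\tot^{-1}(D(R))$ for a unique disk $D(R) \in \{D_{w_j}\}$. Because $P_\tot$ is an orientation-preserving immersion on $\Int(\Dom(P_\tot))$ and $D(R)$ is simply connected, any component $R \subset \Int(\Dom(P_\tot))$ lifts to a diffeomorphic copy of $D(R)$, giving the interior components $R_l$.

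For components touching $\bdy(\Dom(P_\tot))$, I would exploit the fact that the left side $\gamma_L$ of $\Dom(P_\tot)$ maps into $\boldsymbol{\beta} \cup \bdy\tilde{\calD}$ while the right side $\gamma_R$ maps into $\boldsymbol{\alpha} \cup \bdy\tilde{\calD}$. The boundary of any $D_{w_j}$ in $\tilde{\calD}(\psi_0\circ\phi_0)$ meets neither $\boldsymbol{\alpha}$ nor $\boldsymbol{\beta}$ except along the interior chords $d_{x_i}, d_{y_k}$. Hence any segment of $R \cap \gamma_L$ must lie along some $\beta$ chord $d_{x_i}$ and any segment of $R \cap \gamma_R$ along some $\alpha$ chord $d_{y_k}$; by Lemma \ref{arc segments are disjoint}, each such intersection sits inside a unique $d_{x_i}$ or $d_{y_k}$. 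Enumerating the number of such segments that can bound $R$ then yields the four boundary-touching types $\ovl{R}_l, \unl{R}_l, \ova{R}_l, \una{R}_l$ (one or two segments on one side only).

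The distinguished components $R_\init$ and $R_\term$ contain the corner vertices $-i$ and $i$. By the definition of $F_{0,1,0}$, the images $P_\tot(-i) = d_{x_a} \cap d_{y_c}$ and $P_\tot(i) = d_{x_{a'}} \cap d_{y_{c'}}$ lie in $D_{w_b}$ and $D_{w_{b'}}$ respectively, so $R_\init$ embeds into $D_{w_b}$ and $R_\term$ embeds into $D_{w_{b'}}$ via $P_\tot$. These are the unique components of $P_\tot^{-1}(\DD)$ meeting both $\gamma_L$ and $\gamma_R$, since the argument of the previous paragraph confined every other boundary-touching component to a single side.

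The final and most delicate step is the trichotomy. When $R_\init \neq R_\term$, I would show that each of $R_\init \cap \bdy(\Dom(P_\tot))$ and $R_\term \cap \bdy(\Dom(P_\tot))$ is a single connected arc passing through the appropriate corner vertex: if $R_\init \cap \gamma_L$, say, had multiple components, then two such $\beta$-segments together with an outer arc of $\bdy D_{w_b}$ would enclose a region of $\Dom(P_\tot) \setminus R_\init$ whose boundary lay entirely in $\boldsymbol{\alpha} \cup \boldsymbol{\beta} \cup \bdy\tilde{\calD}$, contradicting the immersion condition on $P_\tot$. This gives \textbf{Case 3}. When $R_\init = R_\term$, necessarily $w_b = w_{b'}$ and $R_\init$ contains both corner vertices; its complement is then a disjoint union of half-disks attached to $\gamma_L$ or $\gamma_R$. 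Using the minimal-intersection choice of $\{d_{x_i}\}$ and $\{d_{y_k}\}$ in Construction \ref{Gluing Construction} together with simple-connectedness of $D_{w_b}$ and of $\Dom(P_\tot)$, these half-disks collapse to a single one on one side, giving \textbf{Case 1} or \textbf{Case 2}. The main obstacle is precisely this final structural reduction — ruling out configurations in which $\Dom(P_\tot) \setminus R_\init$ has multiple connected pieces or straddles both sides of $\Dom(P_\tot)$ — and I expect it to hinge on a careful Euler characteristic and monogon-avoidance argument using the immersion property and the minimality of the arc segments.
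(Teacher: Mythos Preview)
Your overall framework matches the paper's: classify components of $P_\tot^{-1}(\DD)$ by how they meet $\bdy(\Dom(P_\tot))$, and use Lemma~\ref{arc segments are disjoint} as the controlling input. But you have not actually supplied the mechanism that Lemma~\ref{arc segments are disjoint} provides, and two of your claimed steps don't work as written.

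First, the sentence ``Enumerating the number of such segments that can bound $R$ then yields the four boundary-touching types'' is an assertion, not an argument, and the following sentence ``the argument of the previous paragraph confined every other boundary-touching component to a single side'' is simply false: your previous paragraph only identified that left-boundary segments lie on some $d_{x_i}$ and right-boundary segments on some $d_{y_k}$. The paper's reason is concrete. A component $R$ (other than $R_\init,R_\term$) meeting both $\gamma_L$ and $\gamma_R$ embeds in some $D_{w_j}$ with a $d_{x_i}$-segment and a $d_{y_k}$-segment on its boundary, separated by arcs of $\bdy D_{w_j}$; this forces $d_{x_i}$ and $d_{y_k}$ to be disjoint chords of $D_{w_j}$, contradicting the second clause of Lemma~\ref{arc segments are disjoint}. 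Similarly, three or more boundary segments on the same side would place three pairwise-disjoint chords of one type in a single $D_{w_j}$, and then no chord of the other type could intersect all three in minimal position, again contradicting Lemma~\ref{arc segments are disjoint}. These two observations are the entire content of the first half of the lemma.

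Second, your Case~3 argument is incorrect: the enclosed region you describe would have part of its boundary mapping to an arc of $\bdy D_{w_b}$, which lies in the interior of $\tilde{\calD}(\psi_0\circ\phi_0)$ and is \emph{not} contained in $\boldsymbol{\alpha}\cup\boldsymbol{\beta}\cup\bdy\tilde{\calD}$, so there is no contradiction with the immersion condition. The paper handles the trichotomy by the same combinatorics as above (``follows similarly''): extra boundary segments of $R_\init$ or $R_\term$ would again produce disjoint $d$-chords of opposite type, or too many parallel $d$-chords of the same type, in a single disk. Your proposed Euler-characteristic/monogon route may be salvageable, but it is both more work and unnecessary once you use Lemma~\ref{arc segments are disjoint} directly.
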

\begin{proof}
The first part of the lemma follows essentially from Lemma \ref{arc segments are disjoint}.
In particular, a component of $P_\tot^{-1}(\DD)$ whose intersection with $\bdy(\Dom(P_\tot))$ has components on both sides of $\Dom(P_\tot)$ would imply a pair of nonintersecting 
arcs $d_{x_i}$ and $d_{y_k}$ in the same disk $D_{w_j}$.
On the other hand, a component of $P_\tot^{-1}(\DD)$ whose intersection with $\bdy(\Dom(P_\tot))$ has 
more than two components on the same side of $\Dom(P_\tot)$
would imply three segments of $\{d_{x_i}\}$ (resp. $\{d_{y_k}\}$) 
in a disk $D_{w_j}$ with the wrong combinatorics (i.e. disagreeing with Lemma \ref{arc segments are disjoint}).
Namely, no element of $\{d_{y_l}\}$ (resp. $\{d_{x_i}\}$) could intersect all three in minimal position.

The second part of the lemma follows similarly.
\end{proof}

\begin{example}
See Figure \ref{PolygonConstruction2} for an example of \textbf{Case 3}. 
\end{example}

We consider Cases 1 - 3 from Lemma \ref{preimageofTinP} separately.
Let $\alpha(x_i)$ denote the $\alpha$ arc of $\calD(\phi_0)$ containing $x_i$, and let $\beta(y_k)$ denote
the $\beta$ arc of $\calD(\psi_0)$ containing $y_k$.

\sss

\noindent \textbf{Case 1}: In this case the entire right side of $P_{\text{tot}}$ is a segment of some $\alpha$ segment from $\{d_{y_k}\}$,
and therefore $w_{b'} = w_{b}$, $y_{c'} = y_{c}$, and $n = 0$. 
Let $\calP_L$ denote the restriction of $P_\tot$ to $\Dom(P_\tot) \setminus P_\tot^{-1}(\DD)$.
Then $\calP_L$ maps $\bdy \calP_L \cap \bdy R_\init$ diffeomorphically to a segment $\calP_L(\bdy \calP_L \cap \bdy R_\init)$ of $\bdy D_{w_b}$
with endpoints in $d_{x_a}$ and $d_{x_{a'}}$ respectively.
Together $\simt(\calP_L(\bdy \calP_L \cap \bdy R_\init))$, $t_{x_a}$, $t_{x_{a'}}$, and $\alpha(x_a) = \alpha(x_{a'})$ define
a rectangular region $A_\init$ of $T_{w_b}$. 
We proceed by gluing as follows:
\begin{itemize}
 \item Glue $A_\init$ to $\calP_L$ along $\bdy A_\init \cap \bdy T_{w_b}$.

 \item For each $\ova{R}_l$, the intersection $\ova{R}_l \cap \bdy(\Dom(P_{\text{tot}}))$ is two $\beta$ segments $d^-(\ova{R}_l),d^+(\ova{R}_l) \in \{d_{x_{i}}\}$.
The corresponding $\beta$ segments $t^-(\ova{R}_l),t^+(\ova{R}_l) \in \{t_{x_{i}}\}$, together with the two components of 
$\simt(\calP_L(\ova{R}_l \cap \calP_L))$, define a rectangular region $\ova{A}_l$ of $T(\ova{R}_l)$, which we glue to 
$\calP_L$ along $\bdy \ova{R}_l$.
\end{itemize}

Let $P_L$ denote the result after the above gluing. The following lemma shows that $P_L$ is a polygon
in $\calD(\phi_0)$ from $x_a$ to $x_{a'}$ through $(\sigma_1,...,\sigma_m)$ and $()$, as in (\ref{case 1}).

\begin{lemma}
$\{R_l\} = \{\ovl{R}_l\} = \{\unl{R}_l\} = \{\una{R}_l\} = \nil$.
\end{lemma}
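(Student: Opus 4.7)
The plan is to combine one direct observation with an Euler characteristic accounting for the construction of $P_L$.

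First, in Case 1 the entire right side $\gamma_R$ maps into $d_{y_c} \subset D_{w_b} \subset \DD$. Since $\gamma_R$ is connected and contains $-i$, it lies entirely in $R_\init$, and so no component of $P_\tot^{-1}(\DD)$ other than $R_\init$ can meet $\gamma_R$. This immediately gives $\{\unl{R}_l\} = \{\una{R}_l\} = \nil$.

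To rule out $\{R_l\}$ and $\{\ovl{R}_l\}$, I would track $\chi(\Dom(P_L))$ through each step of the construction. Starting from $\chi(\Dom(P_\tot)) = \chi(\D^2) = 1$, removing $R_\init$ leaves the half disk (again with $\chi = 1$), and gluing $A_\init$ along a single boundary arc preserves $\chi$. For each $\ova{R}_l$, the removal lowers $\chi$ by one (since $\bdy \ova{R}_l$ is a circle, with $\chi = 0$), but the subsequent gluing of $\ova{A}_l$ as a disk along all of $\bdy \ova{R}_l$ raises $\chi$ back by one; net zero. By contrast, each interior $R_l$ contributes $-1$ to $\chi$ when removed (the complement of a sub-disk in a disk is an annulus) and has no corresponding gluing, and similarly each $\ovl{R}_l$, attached to $\gamma_L$ along a single arc, contributes $-1$ upon removal and is also not filled by any subsequent gluing. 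Collecting, we obtain
\begin{align*}
\chi(\Dom(P_L)) = 1 - |\{R_l\}| - |\{\ovl{R}_l\}|.
\end{align*}

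The key remaining input is that $\Dom(P_L)$ must be homeomorphic to $\D^2$ and hence have $\chi = 1$: this follows from the fact that the construction of $P_L$ in Case 1 is designed to produce exactly the polygon in $\calD(\phi_0)$ corresponding to $P_\tot$ under the inverse of the bijection of Direction 1 Case 1, whose domain is $\D^2$ by construction there. Combined with the formula above, this forces $|\{R_l\}| = |\{\ovl{R}_l\}| = 0$. The main obstacle will be verifying carefully that no additional gluing in the Case 1 construction can ``repair'' an interior hole from an $R_l$ or a boundary notch from an $\ovl{R}_l$; a direct inspection of the gluing recipe, which only pastes in $A_\init$ along a single arc and each $\ova{A}_l$ along the full circle $\bdy \ova{R}_l$, confirms that no such repair is possible.
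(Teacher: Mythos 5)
Your first observation, that the right side lies entirely in $R_\init$ and hence $\{\unl{R}_l\}=\{\una{R}_l\}=\nil$, is correct and is exactly the argument the paper gives. For $\{R_l\}$ and $\{\ovl{R}_l\}$, however, the paper takes a quite different route: it asserts that $\calP_L$ (the restriction of $P_\tot$ to the complement of $P_\tot^{-1}(\DD)$) has image in $\calD(\phi_0)$, and then notes that an interior $R_l$ would force part of $\bdy T(R_l)$, and an $\ovl{R}_l$ would force an arc of $\bdy T(\ovl{R}_l)$ joining the two endpoints of $t(\ovl{R}_l)$ (which necessarily passes out of $\calD(\phi_0)$), into the image --- a contradiction.

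Your Euler characteristic accounting has a genuine error precisely at the $\ovl{R}_l$ step. A component $\ovl{R}_l$ is a disk meeting $\gamma_L$ in a single arc, i.e.\ it is a ``bite'' out of the boundary of the disk $H$, and removing such a bite leaves a disk: $\chi$ is unchanged, not decreased by $1$. (By inclusion--exclusion, $\chi(H) = \chi(\ovl{R}_l) + \chi(H\setminus\ovl{R}_l) - \chi(\text{common arc}) = 1 + \chi(H\setminus\ovl{R}_l) - 1$.) The correct bookkeeping gives $\chi(\Dom(P_L)) = 1 - |\{R_l\}|$, so $\chi$ alone cannot detect $\ovl{R}_l$'s; the reason they are forbidden is the wrong boundary behavior (an arc of $\bdy D(\ovl{R}_l)$ in $\bdy\Dom(P_L)$ that is not a $\beta$ arc or boundary chord), not the topology of the domain. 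Separately, even the constraint $\chi(\Dom(P_L))=1$ is circular as you have stated it: you derive it from ``$P_L$ must be the polygon corresponding to $P_\tot$ under the Direction $1$ bijection,'' but establishing that correspondence is precisely what Direction $2$, and this lemma in particular, is trying to do; one cannot assume the output of the gluing is a polygon when the point is to show that it is. (As a smaller point, the signs in your $\ova{R}_l$ step are reversed --- removing a band meeting $\gamma_L$ in two arcs disconnects $H$ and raises $\chi$ by one, and the gluing lowers it --- though the net-zero conclusion is correct.)
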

\begin{proof}
The fact that $\{\unl{R}_l\}$ and $\{\una{R}_l\}$ are empty is immediate, since the entire right side of $P_\tot$ is contained in $R_\init$.
We note that $\calP_L$ has image in $\calD(\phi_0)$, and therefore $\{R_l\}$ and $\{\ovl{R}_l\}$ must be empty as well.
\end{proof}

\sss

\noindent \textbf{Case 2}: This case follows similarly to Case 1, except with the roles of the left and right sides switched.

\sss

\noindent \textbf{Case 3}: In this case, let $\boldsymbol{\tilde{\rho}}$ and $\boldsymbol{\tilde{\sigma}}$ 
denote the images of the $\rho$ and $\sigma$ boundaries of $\calD(\mathbb{I}) \setminus \TT$ in $\tilde{\calD}(\psi_0\circ\phi_0)$. 
Observe that each component of $P_\tot^{-1}(\calD(\mathbb{I}))$ is a rectangular region $\calP_M^{i,j}$ 
(with the indices $i,j$ to be defined shortly)
of $\Dom(P_\tot)$. Let $\xi^{i,j} = \calP_M^{i,j} \cap \boldsymbol{\tilde{\rho}}$ and $\xi^{i,j'} = \calP_M^{i,j} \cap \boldsymbol{\tilde{\sigma}}$.
Let $\calP_\tot$ denote the restriction of $P_\tot$ to $\Dom(P_\tot) \setminus P_\tot^{-1}(\DD)$.

\begin{lemma}
$\{R_l\} = \nil$. 
\end{lemma}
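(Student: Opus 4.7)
The plan is to argue by contradiction: suppose some such $R_l$ exists. Since $R_l \subset \Int(\Dom(P_\tot))$ and $P_\tot|_{R_l}$ is a diffeomorphism onto $D(R_l) = D_{w_j}$, its boundary $\bdy R_l$ is an embedded simple closed curve in $\Int(\Dom(P_\tot))$ mapping diffeomorphically onto $\bdy D_{w_j}$.

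First, I would identify the image $\bdy D_{w_j}$ via the identification $\sim$ of Construction \ref{Gluing Construction}: it corresponds to $\bdy T_{w_j}$ inside $\calD(\phi_0) \cup_\sigma -\calD(\mathbb{I}) \cup_\rho \calD(\psi_0)$. Because $T_{w_j}$ is a neighborhood of the $\alpha$-circle (which crosses $\calD(\phi_0)$ and $-\calD(\mathbb{I})$) together with the $\beta$-circle (which crosses $-\calD(\mathbb{I})$ and $\calD(\psi_0)$) at their intersection $w_j$, the curve $\bdy T_{w_j}$ contains nontrivial arcs in each of the three pieces $\calD(\phi_0)$, $-\calD(\mathbb{I})$, and $\calD(\psi_0)$.

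Next I would analyze a thin outside collar $N$ of $\bdy R_l$ in $\Dom(P_\tot) \setminus R_l$. By the orientation-preserving immersion property, $N$ maps onto an annular collar of $\bdy D_{w_j}$ in $\tilde{\calD}(\psi_0\circ\phi_0) \setminus D_{w_j}$. The points of $P_\tot^{-1}(\boldsymbol{\tilde{\rho}} \cup \boldsymbol{\tilde{\sigma}})$ on $\bdy R_l$ partition $N$ into a cyclic sequence of arcs, each lying in a component of $\Dom(P_\tot) \setminus P_\tot^{-1}(\DD)$ of type $\calP_L$, $\calP_M^{i,j}$, or $\calP_R^i$. Since $\bdy T_{w_j}$ traverses $-\calD(\mathbb{I})$ in a nontrivial arc, at least one arc of $N$ must lie in some $\calP_M^{i',j'}$, i.e.\ in the $\TT$-complement of a compact connected region $P_M^{i',j'}$ of $\calD(\mathbb{I}) \setminus (\boldsymbol{\alpha}\cup\boldsymbol{\beta})$.

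Finally I would derive the contradiction: $P_M^{i',j'}$ is a disk whose boundary is a concatenation of $\boldsymbol{\alpha}$-arc, $\boldsymbol{\beta}$-arc, and $\bdy\calD(\mathbb{I})$-segments, and correspondingly $\bdy \calP_M^{i',j'}$ meets $\bdy T_{w_j}$ only along finitely many disjoint segments meeting $\bdy P_M^{i',j'}$ transversely. The arc of $\bdy R_l$ on $\bdy \calP_M^{i',j'}$, however, would need to close up into a portion of $\bdy T_{w_j}$ enclosing $R_l$, forcing $\bdy P_M^{i',j'}$ to contain a closed loop on the interior curve $\bdy T_{w_j}$ from the $-\calD(\mathbb{I})$ side, which is incompatible with the disk structure of $P_M^{i',j'}$. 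The main obstacle will be to formalize this last step rigorously (in particular that following $\bdy R_l$ forces the $\calP_M^{i',j'}$ arc to actually close into a loop of $\bdy P_M^{i',j'}$). An alternative cleaner route, should the direct argument prove stubborn, is an Euler-characteristic count on $\Dom(P_\tot) \cong \D^2$ using the taxonomy of Lemma \ref{preimageofTinP} to show directly that no internal $R_l$ is compatible with the presence of distinct $R_\init, R_\term$ required by Case 3.
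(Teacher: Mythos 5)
Your direct topological argument has a genuine gap at the final step, and I don't think the claimed contradiction goes through as stated. Tracking a collar $N$ of $\bdy R_l$ and observing that some arc of $N$ must land in a region $\calP_M^{i',j'}$ mapping into $\calD(\mathbb{I}) \setminus \TT$ is correct: since $w_j$ itself lies in $-\calD(\mathbb{I})$, the curve $\bdy T_{w_j}$ certainly traverses the middle piece. But an arc of $\bdy R_l$ lying in $\calP_M^{i',j'}$ is just an arc whose endpoints sit on preimages of $\boldsymbol{\tilde{\rho}}$ or $\boldsymbol{\tilde{\sigma}}$ — it does not close up into a loop, and there is no reason for $\bdy P_M^{i',j'}$ to ``contain a closed loop.'' It is the \emph{full} circle $\bdy R_l$ that encloses $R_l$ and maps onto $\bdy D_{w_j}$; its restriction to any single middle region is an honest arc. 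A hexagonal component $P_M^{i',j'}$ of $\calD(\mathbb{I}) \setminus (\boldsymbol{\alpha}\cup\boldsymbol{\beta})$ with an arc of $\bdy T_{w_j}$ cut out of it near a corner is perfectly consistent, so no contradiction is forced locally. In other words, the obstruction to an interior $R_l$ is not visible one region at a time; it is a global combinatorial constraint.

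Your second, ``alternative cleaner route'' is in fact essentially the paper's proof, which is a counting argument rather than a local topological one. The paper sets $r$ to be the number of components of $\Dom(\calP_\tot) \setminus \cup_{i,j}\calP_M^{i,j}$ and $s$ to be the number of components of $P_\tot^{-1}(\boldsymbol{\tilde\rho}\cup\boldsymbol{\tilde\sigma})$, observes (via Remark \ref{exludebothsidedarcs}) that every such component must touch $\bdy\Dom(P_\tot)$, and derives the relations $r \leq |\{\ovl R_l\}| + |\{\unl R_l\}| + 2$, $r = s/2 - |\{R_l\}| + 1$, and $s = 2 + 2|\{\ovl R_l\}| + 2|\{\unl R_l\}| + 4|\{R_l\}|$, whose combination forces $|\{R_l\}| \leq 0$. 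So the instinct that one should count components against the taxonomy of Lemma \ref{preimageofTinP} is exactly right; the direct ``follow $\bdy R_l$ into a middle rectangle and get a loop'' version is what needs to be abandoned, since the decisive constraint (each left/right component must reach $\bdy\Dom(P_\tot)$) only has bite when combined across all regions simultaneously.
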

\begin{proof}
Let $r$ and $s$ denote the number of components of 
$\Dom(\calP_\tot) \setminus \cup_{i,j}\calP_M^{i,j}$
and $P_{\tot}^{-1}(\boldsymbol{\tilde{\rho}} \cup \boldsymbol{\tilde{\sigma}})$ respectively.
We first claim that each component of $\Dom(\calP_\tot) \setminus \cup_{i,j}\calP_M^{i,j}$
intersects $\bdy \Dom(P_\tot)$. This is because the image of such a component would violate
the conditions of Section \ref{arc diagrams and the algebra B(Z)}, as in Remark \ref{exludebothsidedarcs}.
Then a little thought shows that we must have
\begin{align*}
r &\leq  |\{\ovl{R}_l\}| + |\{\unl{R}_l\}| + 2 \\
r &= s/2 - |\{R_l\}| + 1\\
s &= 2 + 2|\{\ovl{R}_l\}| + 2|\{\unl{R}_l\}| + 4|\{R_l\}|.
\end{align*}
These combine to yield
\begin{align*}
1 + |\{\ovl{R}_l\}| + |\{\unl{R}_l\}| + 2|\{R_l\}| - |\{R_l\}| + 1 \leq |\{\ovl{R}_l\}| + |\{\unl{R}_l\}| + 2,
\end{align*}
i.e. 
\begin{equation*}
|\{R_l\}| \leq 0.\qedhere 
\end{equation*}
\end{proof} 

Now let $\calP_L$ denote the restriction of $\calP_\tot$ to the union of the components of $\Dom(\calP_\tot) \setminus \cup_{i,j}\calP_M^{i,j}$
which intersect the left side of $\Dom(P_\tot)$.
Similarly, let $\calP_R^1,...,\calP_R^r$ denote the restrictions of $\calP_\tot$ to the components of $\Dom(\calP_\tot) \setminus \cup_{i,j}\calP_M^{i,j}$
which intersect the right side of $\Dom(P_\tot)$ (ordered from $-i$ to $i$).
Then $\calP_M^{i,j}$ is the $j$th element of $\{\calP_M^{i,j}\}$ intersecting $\calP_R^i$.

By gluing regions of the tori $\{T_{w_j}\}$, we will upgrade 
\begin{itemize}
 \item $\calP_L$ to a polygon $P_L: \D^2 \rightarrow \calD(\psi_0)$ from $x_a$ to $x_{a'}$ through
$(\sigma_1,...,\sigma_m)$ and $(\Pi(\xi^{1,1},...,\xi^{1,s_1}),...,\Pi(\xi^{r,1},...,\xi^{r,s_r}))$

 \item each $\calP_M^{i,j}$ to a connected component $P_M^{i,j}$ of $\calD(\mathbb{I}) \setminus (\boldsymbol{\alpha}\cup\boldsymbol{\beta})$ 
defined by $\xi^{i,j}$ and $\xi^{i,j'}$, and 

 \item each $\calP_R^i$ to a polygon $P_R^i: \D^2 \rightarrow \calD(\psi_0)$ through
$(\xi^{i,1'},...,\xi^{i,s_i'})$ and $(\rho_{t_{i-1}+1},...,\rho_{t_i})$ for each $1 \leq i \leq r$,
\end{itemize}
where $0 = t_0 \leq t_1 \leq ... \leq t_r = n$.

The gluing to $\calP_L$ is as follows:
\begin{itemize}
 \item For each $\ovl{R}_l$, the intersection $\ovl{R}_l \cap \bdy(\Dom(P_{\text{tot}}))$ maps to $\beta$ segment $d(\ovl{R}_l) \in \{d_{x_{i}}\}$.
Its corresonding $\beta$ segment $t(\ovl{R}_l) \in \{t_{x_{i}}\}$,
together with the two components of $\simt(P_\tot(\ovl{R}_l \cap \Dom(\calP_L))) \cap \calD(\phi_0)$ define a rectangular region $\ovl{A}_l$
of $T(\ovl{R}_l) \cap \calD(\phi_0)$, which we glue to $\calP_L$ along $\simt(P_\tot(\ovl{R}_l \cap \Dom(\calP_L))) \cap \calD(\phi_0)$.

 \item For each $\unl{R}_l$, the intersection $T(\unl{R}_l) \cap \calD(\phi_0)$ is split in half by $\boldsymbol{\alpha}$.
We glue the half containing $\simt(P_\tot(\unl{R}_l \cap \Dom(\calP_L))) \cap \calD(\phi_0)$ to $\calP_L$
along $\simt(P_\tot(\unl{R}_l \cap \Dom(\calP_L))) \cap \calD(\phi_0)$.

 \item For each $\ova{R}_l$, the intersection $\ova{R}_l \cap \bdy\Dom(P_{\text{tot}})$ maps to two $\beta$ segments $d^-(\ova{R}_l),d^+(\ova{R}_l) \in \{d_{x_{i}}\}$.
The corresponding $\beta$ segments $t^-(\ova{R}_l),t^+(\ova{R}_l) \in \{t_{x_{i}}\}$, 
together with the two components of $\simt(P_\tot(\ova{R}_l \cap \Dom(\calP_L)))$,
define a rectangular region $\ova{A}_l$ of $T(\ova{R}_l)$. We glue $\ova{A}_l$ to $\calP_L$ along $\simt(P_\tot(\ova{R}_l \cap \Dom(\calP_L)))$.

 \item The $\beta$ segment $t_{x_a}$, together with $\alpha(x_a)$ and $\simt(P_\tot(R_\init \cap \Dom(\calP_L))) \cap \calD(\phi_0)$,
define a rectangular region of $T_{w_b} \cap \calD(\phi_0)$,
which we glue to $\calP_L$ along $\simt(P_\tot(R_\init \cap \Dom(\calP_L))) \cap \calD(\phi_0)$.

Similarly, the $\beta$ segment $t_{x_{a'}}$, together with $\alpha(x_{a'})$ and $\simt(P_\tot(R_\term \cap \Dom(\calP_L))) \cap \calD(\phi_0)$,
define a rectangular region of $T_{w_{b'}} \cap \calD(\phi_0)$,
which we glue to $\calP_L$ along $\simt(P_\tot(R_\term \cap \Dom(\calP_L))) \cap \calD(\phi_0)$.
\end{itemize}

Similarly, the gluing to the $\calP_R^{i}$'s is as follows:
\begin{itemize}
\item For each $\ovl{R}_l$ intersecting $\calP_R^{i,j}$, the intersection $T(\ovl{R}_l) \cap \calD(\psi_0)$ is split in half by $\boldsymbol{\beta}$.
We glue the half containing $\simt(P_\tot(\ovl{R}_l \cap \Dom(\calP_R^i))) \cap \calD(\psi_0)$ to $\calP_R^i$
along $\simt(P_\tot(\ovl{R}_l \cap \Dom(\calP_R^i))) \cap \calD(\psi_0)$.
 \item For each $\unl{R}_l$ intersecting $\calP_R^{i}$ and $\calP_R^{i+1}$, the intersection
$T(\unl{R}_l) \cap \calD(\psi_0)$ is split in half by $\boldsymbol{\beta}$.
We glue these two halves to $\calP_R^i$ and $\calP_R^{i+1}$ along 
$\simt(P_\tot(\unl{R}_l \cap \Dom(\calP_R^i)))$ and $\simt(P_\tot(\unl{R}_l \cap \Dom(\calP_R^{i+1})))$ respectively.
 \item For each $\una{R}_l$ intersecting $\calP_R^{i}$, the intersection $\una{R}_l \cap \bdy\Dom(P_R^i)$ maps to two $\alpha$ segments $d^-(\una{R}_l),d^+(\una{R}_l) \in \{d_{y_{k}}\}$.
The corresponding $\alpha$ segments $t^-(\una{R}_l),t^+(\una{R}_l) \in \{t_{y_{k}}\}$, 
together with the two components of $\simt(P_\tot(\una{R}_l \cap \Dom(\calP_R^i)))$, 
define a rectangular region $\una{A}_l$ of $T(\una{R}_l)$. We glue $\una{A}_l$ to $\calP_R^i$ along $\simt(P_\tot(\una{R}_l \cap \Dom(\calP_R^i)))$.
 \item The $\alpha$ segment $t_{y_c}$, together with $\beta(y_c)$ and $\simt(P_\tot(R_\init \cap \Dom(\calP_R^1))) \cap \calD(\psi_0)$,
define a rectangular region of $T_{w_b} \cap \calD(\psi_0)$,
which we glue to $\calP_R^1$ along $\simt(P_\tot(R_\init \cap \Dom(\calP_R^1))) \cap \calD(\psi_0)$.

Similarly, the $\alpha$ segment $t_{y_{c'}}$, together with $\beta(y_{c'})$ and $\simt(P_\tot(R_\term \cap \Dom(\calP_R^r))) \cap \calD(\psi_0)$,
define a rectangular region of $T_{w_{b'}} \cap \calD(\psi_0)$,
which we glue to $\calP_R^r$ along $\simt(P_\tot(R_\term \cap \Dom(\calP_R^r))) \cap \calD(\psi_0)$.
\end{itemize}

Finally, we let $P_M^{i,j}$ be the component of $\calD(\mathbb{I}) \setminus (\boldsymbol{\alpha}\cup\boldsymbol{\beta})$ containing $\calP_M^{i,j}$,
and this completes the proof.
\end{proof}

\begin{example}
Continuing Example \ref{GluingConstructionExample}, there is a relation in $M(\phi) \otimes (\dd \otimes M(\psi))$ (see Figure \ref{GluingConstructionExampleFig})
coming from a diagram as in Figure \ref{ExampleSpiderRelation}.
Figure \ref{PolygonConstruction1} illustrates how the corresponding polygons in $\calD(\phi_0),\calD(\mathbb{I})$, and $\calD(\psi_0)$,
together with regions of $D_{w_1}$ and $D_{w_2}$, are glued together as in \textbf{Direction 1}
to form the corresponding polygon in $\tilde{\calD}(\psi_0\circ\phi_0)$.
Figure \ref{PolygonConstruction2} illustrates how \textbf{Direction 2} recovers the polygons of Figure \ref{ExampleSpiderRelation} from a polygon
in $\tilde{\calD}(\psi_0\circ\phi_0)$.
\end{example}

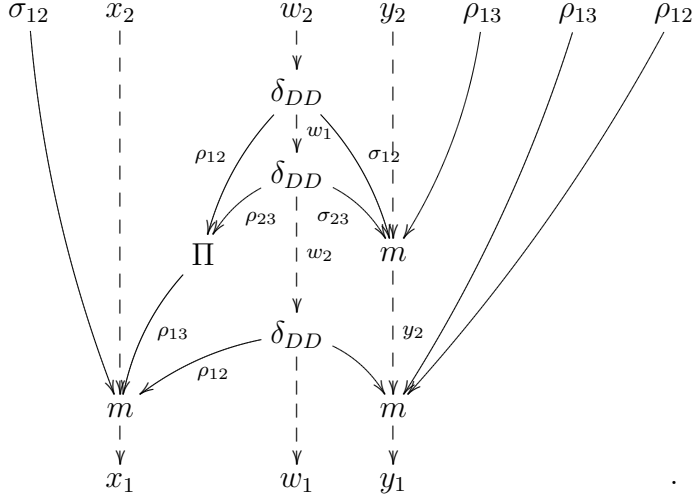
\begin{figure}
\begin{center}
\begin{align*}
\xymatrix@C=1.2em@R=1.2em
{
\sigma_{12}\ar@/_/[dddddr]&x_2\ar@{-->}[ddddd]&&w_2\ar@{-->}[d]&y_2\ar@{-->}[ddd]&\rho_{13}\ar@/^/[dddl]&\rho_{13}\ar@/^/[dddddll]&\rho_{12}\ar@/^/[dddddlll]\\
&&&\delta_{DD}\ar_{\rho_{12}}@/_/[ddl]\ar^{w_1}@{-->}[d]\ar^{\sigma_{12}}@/^/[ddr]&&&&\\
&&&\delta_{DD}\ar^{w_2}@{-->}[dd]\ar^{\rho_{23}}@/_/[dl]\ar_{\sigma_{23}}@/^/[dr]&&&&\\
&&\Pi\ar^{\rho_{13}}@/_/[ddl]&&m\ar^{y_2}@{-->}[dd]&&&\\
&&&\delta_{DD}\ar@{-->}[dd]\ar^{\rho_{12}}@/_/[dll]\ar@/^/[dr]&&&&\\
&m\ar@{-->}[d]&&&m\ar@{-->}[d]&&&&\\
&x_1&&w_1&y_1&&&.\\
} 
\end{align*}
\end{center}
\caption{A relation in $M(\phi)\otimes (\dd \otimes M(\psi))$.}
\label{ExampleSpiderRelation}
\end{figure}
\begin{remark}
Note that our choice of the parenthesization $M(\phi) \otimes (\dd \otimes M(\psi))$ (rather than $(M(\phi) \otimes \dd)\otimes M(\psi)$) manifests itself in $\textbf{Direction 1}$
and $\textbf{Direction 2}$. Namely, in $\textbf{Direction 1}$ we have a single polygon $P_L$ on the left and multiple polygons $P_R^i$ on the right, instead of vice versa,
and this results in our gluing the left ends of $P_M^{i,j}$ and $P_M^{i,j+1}$ directly adjacent to each other on the right side of $P_L$, while
the right sides of $P_M^{i,j}$ and $P_M^{i,j+1}$ are separated by a $\beta$ arc on the left side of $P_M^i$.
In $\textbf{Direction 2}$, we replace each left domain $\ovl{R}_l$ with a single domain $\ovl{A}_l$ in order to construct a single polygon $P_L$ on the left, whereas
we replace each right domain $\unl{R}_l$ with two domains in order to split the corresponding region on the right side into multiple polygons.
\end{remark}
\begin{figure}
\begin{center}
\includegraphics[scale=.5]{./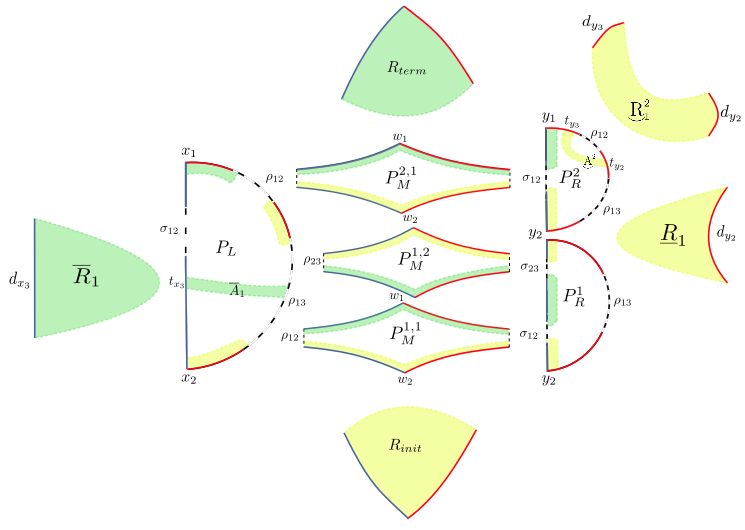}
\end{center}
\caption{\textbf{Direction 1} removes the preimage of $T_{w_1}$ and $T_{w_2}$ from $P_L,P_M^{1,1},P_M^{1,2},P_M^{2,2},P_R^1$, and $P_R^2$ and glues in the pieces $\ovl{R}_1,\unl{R}_1,\una{R}_1^2,
R_\init$, and $R_\term$ to form a polygon in $\tilde{\calD}(\psi_0\circ\phi_0)$.}
\label{PolygonConstruction1}
\end{figure}
\begin{figure}
\begin{center}
 \includegraphics[scale=.5]{./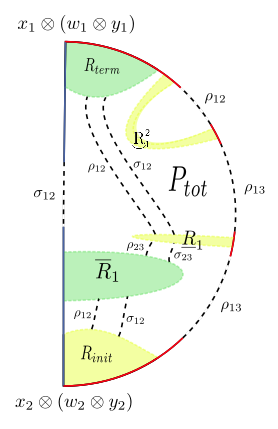}
\end{center}
\caption{\textbf{Direction 2} performs the reverse process of \textbf{Direction 1} to recover $P_L,P_M^{1,1},P_M^{1,2},P_M^{2,2},P_R^1$, and $P_R^2$, and hence the relation
of Figure \ref{ExampleSpiderRelation}1, from a polygon in $\tilde{\calD}(\psi_0\circ\phi_0)$.}
\label{PolygonConstruction2}
\end{figure}

\subsection{Completing the proof}\label{Completing the proof}
In this section we prove two theorems. The first shows that the identity mapping class gives the identity module, and the second shows that no other mapping class gives the identity module.
\begin{definition}
For an $\calA_{\infty}$-algebra $\calA$ over $\kk$, the {\em identity bimodule $\aoa$} is given as follows.
As a $\kk$-module $\aoa$ is isomorphic to $\kk$. For $k \neq 2$, $\delta_{DA}^{1,k} = 0$, while
\begin{align*}
\delta_{DA}^{1,2}(\iota,a) = a \otimes \iota, 
\end{align*}
where $\iota$ is the generator of $\aoa$.
\end{definition}

\begin{lemma}\label{possibleshortchords}
For a chord $\rho \in \calB(\calZ)$, let $|\rho|$ denote the number of short chords composing $\rho$, and similarly for chords in $\calB(\calZ')$.
Let $P$ be a polygon in $\calD(\mathbb{I})$ through $(\sigma_1,...,\sigma_m)$ and $(\rho_1,...,\rho_n)$. We have
\begin{align}\label{countingsigmashortchords}
\left(\sum_{i=1}^m|\sigma_i|\right) - m = n - 1
\end{align}
and
\begin{align}\label{countingrhoshortchords}
\left(\sum_{i=1}^n|\rho_i|\right) - n = m - 1.
\end{align}
\end{lemma}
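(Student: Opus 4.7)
The plan is to reduce both identities to the single statement $R = m + n - 1$, where $R$ denotes the number of connected components of $\D^2 \setminus P^{-1}(\boldsymbol{\alpha}\cup\boldsymbol{\beta})$. The key geometric input is that every connected component of $\calD(\mathbb{I}) \setminus (\boldsymbol{\alpha}\cup\boldsymbol{\beta})$ is a quadrilateral bounded by exactly one $\alpha$-segment, one $\beta$-segment, one short chord in the $\rho$-boundary, and one short chord in the $\sigma$-boundary (the dual short chords from the $\dd$ construction in Section \ref{Various types of bimodules}). Since $P$ is an orientation-preserving immersion away from the finitely many boundary marked points, each component of $\D^2 \setminus P^{-1}(\boldsymbol{\alpha}\cup\boldsymbol{\beta})$ maps homeomorphically onto such a quadrilateral.

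Because $P(\gamma_L) \subset \boldsymbol{\beta}\cup\bdy\calD(\mathbb{I})$ and $P(\gamma_R) \subset \boldsymbol{\alpha}\cup\bdy\calD(\mathbb{I})$, the $\sigma$-short-chord side of each region is forced to lie on $\gamma_L$ and the $\rho$-short-chord side on $\gamma_R$. Summing the short-chord sides region-by-region gives
\begin{align*}
R \;=\; \sum_{i=1}^m |\sigma_i| \;=\; \sum_{j=1}^n |\rho_j|,
\end{align*}
so both identities will follow once $R = m + n - 1$ is established.

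To pin down $R$, I will apply the Euler characteristic identity $V - E + F = 1$ to the natural CW structure on $\D^2$ whose vertices are $\pm i$, the marked points $\{p_j\}$ and $\{q_j\}$, the short-chord subdivisions on $\gamma_L$ and $\gamma_R$, and the interior crossings of $P^{-1}(\boldsymbol{\alpha})$ with $P^{-1}(\boldsymbol{\beta})$, and whose 2-cells are the $R$ rectangular regions. Writing $A$, $B$ for the number of maximal interior arcs of $P^{-1}(\boldsymbol{\alpha})$ and $P^{-1}(\boldsymbol{\beta})$ respectively and $c$ for the number of interior crossings, a systematic accounting yields explicit expressions for $V$ and $E$ in terms of $m$, $n$, $R$, $A$, $B$, and $c$. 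A further local analysis at every boundary vertex then shows that the endpoints of the interior $\boldsymbol{\alpha}$-arcs pair up the short-chord subdivision points inside the $\rho_j$'s on $\gamma_R$, and that the endpoints of the interior $\boldsymbol{\beta}$-arcs pair up the short-chord subdivision points inside the $\sigma_i$'s on $\gamma_L$; combining this with the Euler relation collapses the system to $R = m + n - 1$.

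The principal obstacle is the endpoint analysis for the interior preimage arcs: one must verify that at each short-chord subdivision point exactly one interior arc emanates, while none emanate from $\pm i$ or from the $\{p_j\}, \{q_j\}$ vertices. This requires a careful local computation at every boundary vertex using the specific combinatorics of $\calD(\mathbb{I})$. A potentially cleaner alternative would be induction on $R$: for $R = 1$ the polygon is a single rectangle with $m = n = 1$, and any polygon with $R \geq 2$ can be split along an interior $\boldsymbol{\alpha}$- or $\boldsymbol{\beta}$-arc into two smaller polygons whose $(m,n,R)$-data combine inductively to give the claimed formulas.
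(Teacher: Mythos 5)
Your reduction to counting the regions $R$ of $\D^2 \setminus P^{-1}(\boldsymbol{\alpha}\cup\boldsymbol{\beta})$ is a reasonable reorganization, and the identity $R = \sum_i|\sigma_i| = \sum_j|\rho_j|$ does hold; however, your geometric input for it is wrong. The components of $\calD(\mathbb{I})\setminus(\boldsymbol{\alpha}\cup\boldsymbol{\beta})$ are \emph{hexagons}, not quadrilaterals: each has one $\rho$-side, one $\sigma$-side, \emph{two} $\alpha$-sides and \emph{two} $\beta$-sides (this is stated explicitly in the proof of Theorem~\ref{identityisidentity}). The counting $R = \sum|\sigma_i| = \sum|\rho_j|$ survives because each region still contributes exactly one $\sigma$-short-chord on $\gamma_L$ and one $\rho$-short-chord on $\gamma_R$, but the quadrilateral picture would mislead the reader in the subsequent Euler-characteristic bookkeeping (where the two extra sides per region change $V$ and $E$).

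The more serious issue is that neither of your two routes to $R = m+n-1$ is actually carried out, and the step you defer --- ``the endpoint analysis for the interior preimage arcs'' --- \emph{is} the whole content of the paper's proof. The paper establishes directly, using the fact that in $\calD(\mathbb{I})$ each $\alpha_i$ meets only its dual $\beta_i$ and only once, that every interior component of $P^{-1}(\boldsymbol{\beta})$ has exactly one endpoint on $\gamma_L$ (at an interior subdivision point of some $\sigma_i$, giving $\sum|\sigma_i|-m$ such points) and one on $\gamma_R$ (on the $\alpha$-segment between $\rho_i$ and $\rho_{i+1}$, giving $n-1$ such points), and equates the two counts. Your Euler-characteristic sketch needs exactly this fact; your induction-on-$R$ alternative also needs it, since splitting along an interior arc only produces two honest sub-polygons if that arc runs from $\gamma_L$ to $\gamma_R$ (if it had both ends on one side, the pieces are not of the required form, and you never rule this out --- the paper rules it out by showing such an arc would force a forbidden $\alpha$-$\beta$ intersection or violate the arc-diagram conditions of Section~\ref{arc diagrams and the algebra B(Z)}). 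So the proposal identifies a correct intermediate statement and two plausible strategies, but the crux remains unproven; a complete proof would in effect have to reproduce the paper's endpoint argument, at which point the detour through $R$ is unnecessary.
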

\begin{proof}
Recall that in $\calD(\mathbb{I})$, each $\alpha$ arc intersects intersects its dual $\beta$ arc exactly once and is disjoint from every other $\beta$ arc.
It follows that each component of $C$ of $P^{-1}(\boldsymbol{\beta})$ must have endpoints on both sides of $\Dom(P)$.
To see this, note that if both endpoints of $C$ were on the right side of $\Dom(P)$, we could cut $P$ along $C$ to obtain a polygon in $\calD(\mathbb{I})$ passing through no chords
on the left side, which violates the first sentence of this proof.
On the other hand, $C$ cannot have both endpoints on the left side of $\Dom(P)$ as this would violate the conditions in the first paragraph of
Section \ref{arc diagrams and the algebra B(Z)}.

Note also that the left endpoint of $C$ lies at an intersection point of two adjacent $\calB(\calZ')$ short chord preimages,
whereas the right side is the unique intersection point of the $\alpha$ arc connecting $\rho_i$ and $\rho_{i+1}$ for some $1 \leq i < n$.
Then (\ref{countingsigmashortchords}) follows by noting that the number of left endpoints of components of $P^{-1}(\boldsymbol{\beta})$ is $\left(\sum_{i=1}^m|\sigma_i|\right) - m$, 
whereas the number of right endpoints is $n-1$.

(\ref{countingrhoshortchords}) is proved similarly.
\end{proof}

\begin{theorem}\label{identityisidentity}
$\dd \boxtimes M(\mathbb{I})$ is isomorphic as a type DA bimodule to $\aoa$.
\end{theorem}
\begin{proof}
First observe that $\dd \boxtimes M(\mathbb{I})$ has one generator $w_i \otimes x_i$ for each idempotent $I_i$ of $\calB(\calZ)$,
and hence its generators are in a natural bijective correspondence with the generators of $\aoa$.
The structure map $\delta_{DA}^1$ of $\dd \boxtimes M(\mathbb{I})$ is given by 
\begin{align}\label{twopartdiagram}
\xymatrix@=1em
{
&w_i\ar@{-->}[d]&x_i\ar@{-->}[dddddd]&\rho_{i_1,i_2}\ar@/^/[ddddddl]&...\ar@/^/@{=>}[ddddddll]&\rho_{i_{k-1},i_k}\ar@/^/[ddddddlll]\\
&\bdy\ar@{-->}[d]\ar@/_/[dddddl]\ar@/^/[dddddr]&&&&\\
&\ar@{.}[dd]&&&&\\
&\ar@/_/@{=>}[dddl]\ar@/^/@{=>}[dddr]&&&&\\
&\ar@{-->}[d]&&&&\\
&\bdy\ar@/_/[dl]\ar@/^/[dr]\ar@{-->}[dd]&&&&\\
\Pi\ar[d]&&m\ar@{-->}[d]&&&\\
&&&&&\\
} 
\end{align}
for $\rho_{i_1,i_2},...,\rho_{i_{k-1},i_k} \in \calB(\calZ)$.
Since the left inputs of the $m$ in the diagram are short chords, we must have $k = 2$ by (\ref{countingsigmashortchords}) of Lemma \ref{possibleshortchords}.
Thus it suffices to show that there is a unique polygon $P: \D^2 \rightarrow \calD(\mathbb{I})$ with initial point $x_i$ passing through entirely short chords on the left
and $\rho_{i_1,i_2}$ on the right, and that $P$ fits into a diagram of the form (\ref{twopartdiagram}).

Let $\rho_{i_1,i_1+1},\rho_{i_1+1,i_1+2},...,\rho_{i_2-1,i_2} \in \calB(\calZ)$ be short chords such that
\begin{align*}
\rho_{i_1,i_2} = \rho_{i_1,i_1+1}\rho_{i_1+1,i_1+2}...\rho_{i_2-1,i_2}. 
\end{align*}
For $0 \leq j < i_2-i_1$, let $P_{i_1+j,i_1+j+1}$ denote the component of $\calD(\mathbb{I}) \setminus (\boldsymbol{\alpha}\cup\boldsymbol{\beta})$ defined by 
$\rho_{i_1+j,i_1+j+1}$ and the corresponding short chord $\sigma_{i_1+j,i_1+j+1}$.
Observe that $P_{i_1+j,i_1+j+1}$ is a hexagon with one side corresponding to $\rho_{i_1+j,i_1+j+1}$, one side corresponding
to $\sigma_{i_1+j,i_1+j+1}$, two $\alpha$ sides $\alpha^\init_{i_1+j,i_1+j+1}$ and $\alpha^\term_{i_1+j,i_1+j+1}$ intersecting the initial and terminal points of $\rho_{i_1+j,i_1+j+1}$ respectively,
and two $\beta$ sides
$\beta^\init_{i_1+j,i_1+j+1}$ and $\beta^\term_{i_1+j,i_1+j+1}$ 
intersecting the inital and terminal points of $\sigma_{i_1+j,i_1+j+1}$ respectively.
We proceed by setting
\begin{align*}
P := P_{i_1,i_1+1} \cup P_{i_1+1,i_1+2} \cup ... \cup P_{i_2-1,i_2}, 
\end{align*}
where $P_{i_1 + j,i_1+ j + 1}$ is glued to $P_{i_1 + j + 1, i_1 + j + 2}$ along 
$\alpha^\term_{i_1+j,i_1+j+1}$ and $\alpha^\init_{i_1+j+1,i_1+j+2}$.
The reader can easily check that $P$ is a polygon of the desired form.

As for uniqueness, suppose $P': \D^2 \rightarrow \calD(\mathbb{I})$ is a polygon with initial point $x_i$ passing through entirely short chords on the left and $\rho_{i_1,i_2}$ on the right.
As in the proof of Lemma \ref{possibleshortchords}, there must be pairwise disjoint embedded paths $\alpha_{i_1+j,i_1+j+1}: [0,1] \rightarrow \Dom(P')$ 
for $0 \leq j < i_2 - i_1 - 1$ 
with $P'(\alpha_{i_1+j,i_1+j+1}) \subset \boldsymbol{\alpha}$,
$P'(\alpha_{i_1+j,i_1+j+1}(0)) = \rho^+_{i_1+j,i_1+j+1}$
and $\alpha_{i_1+j,i_1+j+1}(1)$ lying on the left side of $\Dom(P')$. These $\{\alpha_{i_1+j,i_1+j+1}\}$ divide $\Dom(P')$ into regions, the restriction of $P'$ to which are diffeomorphisms
$P'_{i_1,i_1+1},...,P'_{i_2-1,i_2}$ onto $P_{i_1,i_1+1},...,P_{i_2-1,i_2}$ respectively. 
It then follows that $P'$ differs from $P$ by some diffeomorphism $\D^2 \rightarrow \D^2$, i.e. $P'$ and $P$ are equivalent.
\end{proof}

\begin{theorem}
If $\dd \boxtimes M(\phi_0)$ is quasi-isomorphic to $\aoa$, then $\phi_0$ is isotopic to $\mathbb{I}$.
\end{theorem}
\begin{proof}
Suppose $\dd \boxtimes M(\phi_0)$ is quasi-isomorphic to $\aoa$. Then by Corollary \ref{bigcorollary} and Theorem \ref{identityisidentity}
we have $M(\phi_0) \simeq M(\mathbb{I})$. 
Let $I_i \in \calB(\calZ)$ and $J_i \in \calB(\calZ')$ be idempotents corresponding to $\alpha_i$ and $\beta_i$ respectively.
Observe that $J_j H_*(M(\phi_0)) I_i$ is the Floer homology $HF(\alpha_i,\beta_j)$ of $\alpha_i$ and $\beta_j$, i.e. the homology of the chain complex 
generated by intersection points of $\alpha_i$ and $\beta_j$ whose differential counts (equivalence classes of) immersed bigons between $\alpha_i$ and $\beta_j$.
Since $HF(\alpha_i,\beta_i)$ is an isotopy invariant of $\alpha_i$ and $\beta_j$, we can assume there are no bigons between $\alpha$ and $\beta$, and we have
\begin{align*}
 J_j H_*(M(\phi_0)) I_i = \mathit{i}(\alpha_i,\beta_j),
\end{align*}
where $\mathit{i}(\alpha_i,\beta_j)$ is the geometric intersection number of $\alpha_i$ and $\beta_j$, i.e. the minimal 
number of intersection points over all isotopic representatives of $\alpha_i$ and $\beta_j$.

But then since $J_j H^*(M(\phi_0)) I_i$ is a quasi-isomorphism invariant,
we must have $\mathit{i}(\alpha_i,\beta_j) = \delta_{ij}$. It follows that, up to isotopy,
$\phi_0$ fixes the dual curves $\{\eta_i\}$. Since $F \setminus \cup_i \eta_i$ is a collection of disks,
$\phi_0$ must be isotopic to the identity.
\end{proof}


\end{document}